\newtheorem{theorem}{Theorem}[section]
\newtheorem{Lemma}[theorem]{Lemma}
\newtheorem{proposition}[theorem]{Proposition}
\newtheorem*{remark}{Remark}
\numberwithin{equation}{section}
\newcommand\eps{\varepsilon}
\renewcommand{\leq}{\leqslant}
\renewcommand{\geq}{\geqslant}
\renewcommand\Im{\operatorname{Im}} % imaginary part
\begin{document}
	
	\title[Subconvexity for Rankin Selberg L-Functions at Special Points]{Subconvexity for Rankin Selberg L-Functions at Special Points}

	\begin{abstract}
		Let $f$ and $g$ be normalized Hecke-Maass cusp forms for the full modular group having spectral parameters $t_f$ and $t_g$ respectively with $t_f,t_g\asymp T\rightarrow \infty $. In this paper we show that the Rankin Selberg $L$-function associated to the pair $(f,g)$ at the special points $t=\pm(t_f+t_g)$, satisfies the subconvex bound
        \begin{align*}
            L\left(\frac{1}{2}+it,f\otimes g\right)\ll_{\eps} T^{61/84+\eps}.
        \end{align*}
        Additionally at the points $t=\pm(t_f-t_g)\asymp T^{\nu}$ with $2/3+\eps<\nu\leq 1$ we show the subconvex bound 
        \begin{align*}
             L(1/2+it,f\otimes g)\ll_\eps {T^{7/12+\nu/8+\eps}}, \; \text{if }\; 2/3+\eps< \nu\leq 14/17,
        \end{align*}
        and
        \begin{align*}
            L(1/2+it,f\otimes g)\ll_\eps {T^{1/2+19\nu/84+\eps}}, \; \text{if }\; 14/17\leq \nu\leq 1.
        \end{align*}
        With the above results we are able to address the subconvexity problem in the spectral aspect for $GL(2)\times GL(2)$ Rankin Selberg $L$-functions when the parameters
        of both the forms vary under the additional challenge of a considerable amount conductor dropping occurring due to the special points in question.
	\end{abstract}

	\author{Sayan Ghosh}
	\address{Stat-Math Unit\\
		Indian Statistical Institute\\
		Kolkata\\
		700108\\
		India}
	\email{\href{mailto:email}{gsayan74@gmail.com}}

	\subjclass[2020]{11F03, 11F12, 11F30, 11F66}
	\keywords{$GL(2)$- automorphic forms, Rankin Selberg $L$-functions, Subconvexity, Conductor Dropping, Delta Method}
	
	\maketitle
	
	\setcounter{tocdepth}{2}
	
	\tableofcontents
	
	\section{Introduction}\label{intro}
    
    A far reaching and challenging problem in number theory is concerned with bounds on families of automorphic $L$-functions on the critical line. An automorphic $L$-function $L(s,f)$ is a complex valued function represented by a Dirichlet series and an Euler product, which is attached to an automorphic form $f$. It is said to be of degree $d$, if the degree of the Euler product equals $d$. The $L$-function $L(s,f)$ has a meromorphic continuation to the whole complex plane $\mathbb{C}$, and its completion satisfies a functional equation relating its value at $s$ to the value at $1-s$ of the $L$-function of the corresponding dual form. In order to estimate $L(s,f)$ on the critical line, a quantity known as the analytic conductor ($C(f,t)$) has been defined in literature (see Section 2 of \cite{IS00}). It measures the complexity of $L(s,f)$ depending on several of its parameters (such as the level of $f$, spectral parameters of $f$ or the continuous parameter $t$). An application of the Phragmen Lindeloff principle along with the functional equation implies the convexity bound, $L(1/2+it,f)\ll_{d,\eps}C(f,t)^{1/4+\eps}$. The subconvexity problem (ScP) aims to reduce the exponent $1/4$ by some positive amount, independent of $\eps$. Though, the far out of reach generalised Lindeloff Hypothesis (GLH) predicts that the exponent could be reduced to 0, obtaining bounds which are subconvex is still quite challenging.

    \subsection{History of the problem}We now recall a brief history of the subconvexity problem. For degree one $L$-functions, such as $\zeta(s)$ and Dirichlet $L$-functions $L(s,\chi)$, subconvexity is known due to  Weyl\cite{Weyl21} and Hardy-Littlewood in the $t$-aspect and due to Burgess \cite{Bur63} in the level aspect. For degree two $L$-functions subconvexity in $t$-aspect was first obtained by Good \cite{Go82}, by Duke-Friedlander-Iwaniec\cite{DFI93} in the level aspect, by Iwaniec \cite{Iw92} in the spectral aspect, by Jutial-Motohashi\cite{JM05}  in the $t$ and spectral aspect uniformly but away from the conductor dropping range and the problem has been solved in full generality (uniformity in all parameters) by Michel and Vekatesh\cite{MV10}. For degree three $L$-functions attached to self dual forms subconvex estimates in $t$-aspect were first obtained by Li\cite{Li11} and generalised to all $GL(3)$ forms by Munshi\cite{Mun15} by a novel ``delta-symbol" approach. In the spectral aspect for $GL(3)$ $L$-functions, such estimates were obtained by Blomer-Buttcane\cite{BB20} when the spectral parameters are restricted to ``generic" position. For higher degree $L$-functions, subconvex estimates have been relatively few and obtained mostly for $GL(2)\times GL(2)$ and $GL(3)\times GL(2)$ Rankin Selberg $L$-functions. Instances of such estimates, where one of the form is kept fixed, has been found notably in \cite{HM06}, \cite{Hu24}, \cite{JM06}, \cite{KMV02}, \cite{Kum25}, \cite{Li11}, \cite{LLY06}, \cite{MV10}, \cite{Mun22}, \cite{PY23}, \cite{Sar01} and \cite{Sha22}. Finally in a breakthrough paper, Nelson\cite{Nel23} has resolved the subconvexity problem in spectral aspect for higher rank groups, away from the conductor dropping range.
    
    \subsection{Motivation and statements of our results} While studying the above results on Rankin-Selberg $L$-functions, it is a natural question to ask what happens when both the forms vary. Results of such uniform nature has been sporadic and obtained mostly in the level aspect, as in \cite{HolM13} and \cite{Ye14}. We aim to address this question in the archimedean aspect (involving $t$ and the spectral parameters) in this paper. Added to that we also aim to explore the problem when the concerned family of $L$-functions exhibit ``conductor dropping phenomenon''. Subconvexity estimates in such cases are known to be quite difficult to obtain historically and have been rare in literature. Notable examples include Michel-Venkatesh\cite{MV10}, where they settled subconvexity for $GL(2)\times GL(2)$ $L$-functions in full generality when one of the forms is kept fixed.
    With all that in the background, our main results are the following:
    \begin{theorem}\label{Theorem}
        Let $f$ and $g$ be normalized Hecke-Maass cusp forms for the full modular group $\text{SL}_2(\mathbb{Z})$ with Laplacian eigenvalues $1/4+t_f^2$ and $1/4+t_g^2$ respectively. Assume that 
        \begin{align*}
            0\leq t_f,t_g\asymp T,\;\;\;\; T\rightarrow\infty.
        \end{align*}
        Then the Rankin-Selberg $L$-function $L(s,f\otimes g)$ at the special point $s=1/2+it,\; t=t_f+t_g$, satisfies the subconvex bound.
        \begin{align}\label{1.1}
            L\left(\frac{1}{2}+it,f\otimes g\right)\ll_{\eps} T^{61/84+\eps}.
        \end{align}
         Taking conjugates, the same bound also holds at the points $t=-t_f-t_g$.
    \end{theorem}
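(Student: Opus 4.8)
The plan is to prove \eqref{1.1} by the delta (circle) method. First I would apply the approximate functional equation for $L(s,f\otimes g)$. At $s=\tfrac12+it$ with $t=t_f+t_g$ the four archimedean parameters $t\pm t_f\pm t_g$ take the values $2(t_f+t_g)$, $2t_f$, $2t_g$ and $0$, so the gamma factor $\Gamma_{\R}(s-it_f-it_g)$ degenerates to the constant $\Gamma_{\R}(\tfrac12)$ and the analytic conductor is $\asymp T^{3}$ rather than the generic $T^{4}$ --- this is the conductor drop. The approximate functional equation then reduces \eqref{1.1} to the estimate
\begin{equation*}
S(N):=\sum_{n\ge 1}\lambda_f(n)\lambda_g(n)\,n^{-it}\,V\!\left(\tfrac nN\right)\ll N^{1/2}\,T^{61/84+\eps}
\end{equation*}
uniformly for $N\ll T^{3/2+\eps}$, with $V$ a fixed bump function. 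The trivial bound $S(N)\ll N^{1+\eps}$ (Cauchy--Schwarz and Rankin--Selberg) already suffices once $N\ll T^{61/42}$, so the real work lies in the narrow range $T^{61/42}\ll N\ll T^{3/2}$; in the decisive case $N\asymp T^{3/2}$ one must beat the trivial bound by only $T^{1/42}$, which is a measure of how little room the degeneracy leaves.

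Because $t=t_f+t_g$ one may write $n^{-it}=n^{-it_f}n^{-it_g}$, and this particular split is the crucial one: it presents each form at \emph{its own} special point (morally $L(\tfrac12+it_f,f)$ and $L(\tfrac12+it_g,g)$, each of conductor $\asymp T$ rather than $T^{2}$). I would then detect the diagonal in
\begin{equation*}
S(N)=\sum_{n,m}\lambda_f(n)\,n^{-it_f}\;\lambda_g(m)\,m^{-it_g}\,\delta(n-m)\,V\!\left(\tfrac nN\right)V\!\left(\tfrac mN\right)
\end{equation*}
by the DFI delta symbol together with a conductor-lowering device: since $x^{-it_f}V(x/N)$ and $x^{-it_g}V(x/N)$ oscillate only mildly on the scale $N=T^{3/2}$, one can afford to resolve $n=m$ with moduli $q\le Q$ for $Q$ somewhat smaller than $\sqrt N$, paying with an extra short integral that is itself favourable. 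Expanding the delta symbol produces a sum over $q\le Q$, over residues $a\bmod q$, and over $n,m$ twisted by additive characters $e(\pm(n-m)a/q)$.

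Next comes $GL(2)$ Voronoi summation applied separately to the $n$-sum (attached to $f$) and the $m$-sum (attached to $g$): this replaces $n,m$ by dual variables $\widetilde n,\widetilde m$ carried by characters $e(\mp\widetilde n\,\overline a/q)$, $e(\pm\widetilde m\,\overline a/q)$, and replaces the smooth weights by Hankel-type transforms with Bessel kernels of order $2it_f$, $2it_g$. The reason the split above matters is exactly here: the leading oscillation of the Bessel kernel carries a factor $x^{\pm it_f}$, which cancels the twist $x^{-it_f}$ in one term (and reinforces it into a non-stationary, hence negligible, term in the other); after this cancellation only the subleading Bessel oscillation survives, which one expects to make the dual sums substantially shorter than generic (reflecting the conductor drop), apart from a narrow transition contribution that must be handled separately. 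The residue sum over $a$ then collapses to a complete exponential sum modulo $q$ of Kloosterman/Ramanujan type, to be bounded by Weil's estimate. At this stage $S(N)$ has become a short multiple sum over $q,\widetilde n,\widetilde m$ of $\lambda_f(\widetilde n)\lambda_g(\widetilde m)$ against the exponential sum and an oscillatory integral, and I would finish by Cauchy--Schwarz in $\widetilde n$ (dropping $\lambda_f(\widetilde n)$ and invoking $\sum_{\widetilde n}|\lambda_f(\widetilde n)|^{2}\ll\widetilde N$), treating the surviving sum over $\widetilde m$ and $q$ by Poisson summation and/or the $GL(2)$ large sieve together with the Weil bound, and then optimising $Q$ and the conductor-lowering length across all the ranges; this should yield the exponent $61/84$.

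I expect the main obstacle to be the conductor drop itself, which is as much a liability as an asset. At the special point the transition ranges of the two Bessel kernels coincide with the oscillations $x^{\mp it_f},x^{\mp it_g}$ of the twists, so the decisive part of the stationary-phase analysis takes place precisely at a transition, where the standard asymptotic expansions break down and the resonant contributions must be isolated and estimated directly. Moreover the degeneracy also lowers the conductor of the original $L$-function, so the final Cauchy--Schwarz is correspondingly tight: one is forced to extract essentially all of the cancellation available in the complete exponential sums and the oscillatory integrals, and to balance several competing parameter ranges, and it is this tension --- rather than a clean optimisation --- that produces the somewhat unusual exponent $61/84$.
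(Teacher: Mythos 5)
Your overall architecture --- approximate functional equation, the split $n^{-it}=n^{-it_f}n^{-it_g}$, the DFI delta symbol with a shortened modulus $Q=\sqrt{N/K}$, double Voronoi summation, and Cauchy--Schwarz in the $f$-variable --- is exactly the route the paper takes, and your observation that the Bessel oscillation $x^{\pm it_f}$ cancels the twist $x^{-it_f}$, shrinking the dual sums to length $\asymp T$ rather than $T^2q^2/N$, is the correct mechanism behind exploiting the conductor drop. Two smaller points: the character sum over $a\bmod q$ is exactly a Ramanujan sum $c_q(m-n)$, and one must use its exact evaluation $\sum_{d\mid q}d\,\mu(q/d)\,\mathds{1}_{m\equiv n\,\mdlem{d}}$ rather than any Weil-type bound, because the surviving congruence $m\equiv n\bmod d$ is precisely what powers the later Poisson step; and the transition-range difficulty you anticipate for the Bessel kernels is sidestepped in the paper by working with the Mellin--Barnes form of the Voronoi kernel and Stirling's formula, which yields clean stationary-phase asymptotics for the integral transforms in every range of the oscillation parameter $B=NX/CQ$.

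The genuine gap is in your endgame. After Cauchy--Schwarz and opening the square one applies Poisson to the dual $n$-variable (not to $\widetilde m$ and $q$); Taylor-linearizing the logarithmic phase $\frac{t}{\pi}(\log(n+m_1)-\log(n+m_2))$ is only possible when $K$ lies in a window roughly $(T^{1/2},T^{2/3})$, and it restricts the Poisson frequency $r$ to an interval of length $T^{\eps}Q/K$ around $tQ(m_2-m_1)/\pi(m_1+m_1t_{f,g})(m_2+m_1t_{f,g})$. The zero frequency is handled trivially, but for $r\neq 0$ a trivial bound saves nothing, and neither the large sieve nor Weil applies: what remains is a shifted-convolution-type sum $\sum_{r,h}\sum_{m}|\lambda_g(m)\lambda_g(m+dh)|$ with $m$ confined to very short intervals around explicit algebraic points $\alpha_+(r,h,d)$. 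The paper's decisive input (Lemma \ref{estimate_of_S_1}) controls this by introducing the multiplicity $\omega(m)=\#\{(r,h)\}$, applying Cauchy together with the Hoffstein--Lockhart $L^4$ bound on Hecke eigenvalues (Lemma \ref{L^4_norm_Fourier_Coeff}), and counting the admissible pairs via the divisor-bounded inequality $h_1r_2-h_2r_1\ll T^{\eps}$. Without this (or an equivalent) argument the non-zero frequencies are uncontrolled, and the optimization that produces $K=T^{25/42}$ and hence the exponent $61/84$ cannot be carried out; so as written your proposal does not close.
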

    \begin{theorem}\label{Theorem2}
        With the same premise of Theorem \ref{Theorem} and the added condition that 
        \begin{align*}
            t_f-t_g\asymp T^{\nu},\; \text{where }\; 2/3+\eps< \nu\leq 1,
        \end{align*} 
        the Rankin Selberg $L$-function $L(s,f\otimes g)$ at the point $s=1/2+it,$ $t=t_f-t_g$ satisfies the subconvex bound 
        \begin{align}
            L(1/2+it,f\otimes g)\ll_\eps {T^{7/12+\nu/8+\eps}}, \; \text{if }\; 2/3+\eps< \nu\leq 14/17, 
        \end{align}
        and 
        \begin{align}
            L(1/2+it,f\otimes g)\ll_\eps {T^{1/2+19\nu/84+\eps}}, \; \text{if }\; 14/17\leq \nu\leq 1.
        \end{align}
        Taking conjugates, the same bound also holds at the points $t=-t_f+t_g$.
    \end{theorem}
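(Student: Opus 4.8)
The plan is to prove Theorem~\ref{Theorem2} by the delta-method circle of ideas, in a form parallel to the proof of Theorem~\ref{Theorem} but carrying the auxiliary parameter $\nu$ through every estimate. By an approximate functional equation, bounding $L(1/2+it,f\otimes g)$ with $t=t_f-t_g$ is reduced to a power saving over the trivial bound in the dyadic sums
\[
S(N)=\sum_{n\geq 1}\lambda_f(n)\,\lambda_g(n)\,n^{-it}\,W\!\left(\frac nN\right),\qquad 1\leq N\ll T^{1+\nu/2+\eps}
\]
(the smooth factor $n^{-1/2}$ absorbed into $W$), where the length of the leading range is dictated by the \emph{dropped} analytic conductor: with $t=t_f-t_g\asymp T^{\nu}$ the four archimedean factors of $L(1/2+it,f\otimes g)$ have sizes $|2t_f|\asymp T$, $|2(t_f-t_g)|\asymp T^{\nu}$, $|2t_g|\asymp T$ and $|2(t-t_f+t_g)|\asymp 1$, so the conductor is $C\asymp T^{2+\nu}$ in place of the generic $T^{4}$. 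The point $1/2+it$ being special is exploited through the factorisation $n^{-it}=n^{-it_f}\cdot n^{+it_g}$: each of $\lambda_f(n)n^{-it_f}$ and $\lambda_g(n)n^{+it_g}$ is, up to that smooth weight, the coefficient of a $GL(2)$ $L$-function evaluated near $s=1/2+it_f$, resp.\ near $s=1/2-it_g$, a point at which its own analytic conductor collapses from $T^{2}$ to $T$.

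Next I would separate the two forms via the Duke--Friedlander--Iwaniec delta symbol. Writing
\[
S(N)=\sum_{n,m\geq1}\lambda_f(n)\,n^{-it_f}\,\lambda_g(m)\,m^{+it_g}\,W\!\left(\tfrac nN\right)W\!\left(\tfrac mN\right)\delta(n-m),
\]
expanding $\delta(n-m)=\frac1Q\sum_{q\leq Q}\frac1q\sumstar_{a\,(q)}e\!\left(\tfrac{a(n-m)}{q}\right)\int_{\R}h(q,x)\,e\!\left(\tfrac{(n-m)x}{qQ}\right)dx$ with $Q$ a parameter to be optimised (morally of size $N^{1/2}$), the $n$- and $m$-sums decouple and $GL(2)$ Voronoi summation may be applied to each, against the archimedean weights $n^{-it_f}e(nx/qQ)W(n/N)$ and $m^{+it_g}e(-mx/qQ)W(m/N)$. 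The crucial point is that the twist $n^{\mp it_{f,g}}$ resonates with the Bessel kernel $J_{\pm2it_{f,g}}$ produced by Voronoi, so the dual sums run over $n',m'\asymp q^{2}T/N$ (the conductor-dropped value $T$ appearing, not $T^{2}$); the resulting $a$-sum collapses, via the conjugated additive characters $e(-\bar an'/q)$ and $e(+\bar am'/q)$, to the Ramanujan sum $c_q(m'-n')$, which is of size $q$ precisely when $q\mid(m'-n')$; and one is left with an oscillatory integral $\mathcal I_q(n',m')$ from the $x$-integration together with the two Bessel transforms. Schematically,
\[
S(N)\ \approx\ \sum_{q\leq Q}\ \sum_{n'\asymp \frac{q^{2}T}{N}}\ \sum_{m'\asymp \frac{q^{2}T}{N}}\lambda_f(n')\,\lambda_g(m')\;c_q(m'-n')\;\mathcal I_q(n',m'),
\]
with $\mathcal I_q$ carrying all the normalising factors and the remaining analytic information.

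To close, I would apply Cauchy--Schwarz in the variables $(q,n')$, discard $\lambda_f(n')$, open the square, and Poisson-sum in $n'$ (equivalently invoke the large sieve); the zero frequency together with the degenerate term $m_1'=m_2'$ reproduces a contribution of convexity size, while the remaining frequencies exhibit genuine cancellation once the stationary phase of $\mathcal I_q$ is balanced against the Ramanujan sums and the congruence $q\mid(m'-n')$. Balancing the two contributions by the choice of $Q$ yields $S(N)\ll N^{1/2}\bigl(T^{A_1(\nu)+\eps}+T^{A_2(\nu)+\eps}\bigr)$, whose maximum is $T^{7/12+\nu/8+\eps}$ for $2/3+\eps<\nu\leq 14/17$ and $T^{1/2+19\nu/84+\eps}$ for $14/17\leq\nu\leq1$, the switch at $\nu=14/17$ being where the dominant term changes; at $\nu=1$ this specialises to $T^{61/84+\eps}$, consistent with Theorem~\ref{Theorem}, while the restriction $\nu>2/3$ is precisely the range in which the delta-method cost $Q$ stays below the arithmetic gain. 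The case $t=t_f+t_g$ of Theorem~\ref{Theorem} follows the same way, now with $C\asymp T^{3}$ and both twists $n^{-it_f},n^{-it_g}$ of the same sign.

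The principal obstacle is the analysis of $\mathcal I_q(n',m')$. Because we sit at a special point, the Bessel functions $J_{2it_f}$ and $J_{2it_g}$ entering the two Voronoi steps are evaluated in their \emph{transition range}, where argument and order are both of size $\asymp T$ and the classical asymptotics fail; obtaining uniform asymptotic expansions for $\mathcal I_q$ there, locating its stationary points, and showing that — once the congruence $q\mid(m'-n')$ is imposed — there remains cancellation beyond the diagonal, is the heart of the matter, and this is where the conductor-dropping is both paid for and partially recovered. A second, more clerical difficulty is that the length of the approximate functional equation, the sizes of the Voronoi duals, the admissible range of $Q$ and the final optimisation must all be run with the conductor-dropped value $T^{2+\nu}$ in place of the generic $T^{4}$; the careful propagation of $\nu$ through these steps is what produces the two piecewise-linear exponents.
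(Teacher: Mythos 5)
Your outline follows the same route as the paper (approximate functional equation of length $T^{1+\nu/2}$, DFI delta symbol splitting $n^{-it}=n^{-it_f}m^{it_g}$, double Voronoi, Ramanujan sum, Cauchy in $(q,n')$, Poisson in $n'$), but as written it has two genuine gaps. First, the modulus of the delta expansion is \emph{not} ``morally $N^{1/2}$'': the whole argument hinges on taking $Q=\sqrt{N/K}$ with a conductor-lowering parameter $K$ confined to a narrow window (here $T^{1-\nu/2}<K<\min\{T^{1-7\nu/18},T^{3/5+\nu/10}\}$, whose nonemptiness is exactly what forces $\nu>2/3$). This choice deliberately injects an oscillation of size $K$ into the $x$-integral, which is what shortens the $\tau$-ranges after Voronoi and makes the diagonal after Cauchy admissible; with $Q\approx\sqrt N$ the diagonal alone already exceeds convexity. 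Relatedly, your dual lengths $n',m'\asymp q^2T/N$ are off by the factor $B\asymp K$ (the correct constraint is $Nn'/q^2\asymp BT$), and this factor propagates into every subsequent count.

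Second, your endgame for the non-zero Poisson frequencies --- ``the remaining frequencies exhibit genuine cancellation'' --- is not how the bound is actually obtained, and no mechanism for such cancellation is supplied. After Poisson the phase is essentially linear in the integration variable, so integration by parts only \emph{localises} $m_1$ to an interval of length $\asymp N_0D/(RN_2)$ around an explicit algebraic point $\alpha_+(r,h,d)/(1+t_{f,g})$; the saving then comes from a sparsity/counting argument (how many pairs $(r,h)$ produce overlapping intervals, controlled by a divisor-function count of $h_1r_2-h_2r_1\ll T^\eps$) combined with the Hoffstein--Lockhart $L^4$ bound on Hecke eigenvalues to handle the resulting weighted shifted-convolution sum. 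Without this step one saves nothing over the trivial bound in that range, and the exponents $7/12+\nu/8$ and $1/2+19\nu/84$ (and the crossover at $\nu=14/17$) cannot be derived. Finally, the ``transition range'' Bessel difficulty you flag as the principal obstacle is sidestepped in the paper: the Voronoi kernels are analysed through their Mellin--Barnes representations and Stirling's formula, split according to the size of $B=NX/CQ$, so one never needs uniform asymptotics of $J_{2it_f}$ at the turning point; the nontrivial archimedean work is instead the two-dimensional stationary phase in $(\tau_1,\tau_2)$ and the subsequent Taylor expansion of the Poisson phase.
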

        The analytic conductor of $L(1/2+it,f\otimes g)$ is
        \begin{align*}
            C(f\otimes g,t)\asymp \prod_{\pm }\prod_{\pm}(1/2+|t\pm t_f\pm t_g|).
        \end{align*}
        Its size is approximately $T^4$ if $t\asymp T$ with $t$ being away from the points $\ t_f\pm t_g$ and drops to $T^{3}$ (resp. $T^2|t_f-t_g|\asymp T^{2+\nu}$) at points $t=t_f+t_g$ (resp. $t=t_f-t_g$), exhibiting a large drop in the size of the conductor. Reaching subconvexity for $L$-functions in conductor dropping ranges has traditionally posed quite a significant challenge to researchers with only a small number of results available (such as \cite{MV10}). With Theorems \ref{Theorem} and \ref{Theorem2} we are thus able to address the following objectives:
        \begin{itemize}
            \item Address the subconvexity problem for Rankin Selberg $L$-functions when both forms vary simultaneously, ensuring a high degree of conductor dropping taking place due to the special points in question adding to the results of Michel-Venkatesh  (\cite{MV10}) and Jutila-Motohashi (\cite{JM05}, \cite{JM06}) in the archimedean aspects.
            
            \item Considering the premise of Theorem \ref{Theorem2}, in the limiting scenario $\nu= 2/3+\eps$ the conductor is $T^{8/3+\eps}$ which is approximately two-thirds of the exponent $4$ in $T^4$. The same reduction (two-thirds) of the exponent occurs when passing from a generic $GL(3)$ $L$-function to the symmetric square $L$-function $L(1/2+it, \text{sym}^2f)$ of a $GL(2)$ form $f$ keeping $t\asymp T^\eps$. Thus Theorem \ref{Theorem2} can be regarded as an analogue for subconvex bounds of $L(1/2+it, \text{sym}^2f)$ for $t\asymp T^\eps$, in the higher rank setting. 
        \end{itemize}

    \begin{remark}
        Our approach is based on the delta symbol method pioneered by Munshi, marking the first time this technique has been applied to a problem of this nature. A brief outline of the proof has been provided in \S\S\S \ref{sketch}. We point out that here we are not aiming to obtain the best possible bounds for the families by the adopted technique. There might be room for improvement in the strength of our bounds by obtaining more precise estimates in Proposition \ref{prop_7.4} using stationary phase analysis. We only provide the detailed proof of Theorem \ref{Theorem}. The same methodology will also give the result of Theorem \ref{Theorem2}, with minor which are mentioned in \S \ref{Modifictions}.The implied constants in the above results depend additionally on $c_1,c_2$ where $c_1T\leq t_f,t_g\leq c_2T$. Throughout the proof even though we ignore the dependency of the implied constant on $c_1,c_2$, it should be clear when they occur.
    \end{remark}
    \begin{remark}
        Preliminary analysis suggest that Theorems \ref{Theorem}, \ref{Theorem2} can be extended to Maass forms with nontrivial level, with polynomial dependence of the implied constant on the respective levels of the forms.
    \end{remark}

     \paragraph*{\bf{Notation}} Throughout the paper, $\eps$ is an arbitrary positive real number, all of them may be different at each occurrence. By $e(x)$ we denote the exponential $e^{2\pi i x}$. For $y>0$ the notation $x\ll_{\alpha_1,\alpha_2,..\alpha_k} y$ will mean that $|x|\leq Cy$ for some constant $C>0$ depending on the parameters $\alpha_1,\alpha_2,...,\alpha_k$. In most applications we will ignore dependencies on the parameters $\alpha_i$ and just write $x\ll y$. By $x\asymp y$ we mean $C_1y\leq |x|\leq C_2y$ and $q\sim C$ means $C< q\leq 2C$.

	\section{Preliminaries}\label{prelim}
	\subsection{Maass forms on $\text{SL}_2(\mathbb{Z})$} We recall basic definitions and notions concerning Maass cusp forms on the full modular group $\text{SL}_2(\mathbb{Z})$. For $t_j\in i\mathbb{R}\cup [-1/2,1/2]$, let $H_{t_j}(1)$ denote the space of (weight zero) Hecke Maass cusp forms of spectral parameter $t_j$. Every $f\in H_{t_j}(1)$ is an eigenfunction of all the Hecke operators and admit a Fourier decomposition
	\begin{align}
		f(z)=y^{1/2}\sum_{n\neq 0}\lambda_f(n)n^{-1/2}K_{it_j}(2\pi|n|y)e(nx).
	\end{align}
	If $f$ is normalized so that $\lambda_f(1)=1,$ we call it a normalized Hecke-Maass cusp form. The Fourier coefficients of a normalized Hecke-Maass cusp form are equal to the corresponding Hecke eigenvalues, which in particular implies that they are real numbers.  From Rankin Selberg theory the following Ramanujan Bound on average ,
	\begin{align}\label{2.2}
		\sum_{n\leq N} |\lambda_f(n)|^2\ll_\varepsilon(1+|t_j|)^\varepsilon N^{1+\varepsilon},
	\end{align}
    has been established for the Fourier coefficients of a normalized Hecke form.
	In the same context, if $\theta_1$ is the best known exponent towards the generalized Ramanujan conjecture, i.e
    \begin{align}
        \lambda_f(n)\ll_\eps n^{\theta_1+\eps}, 
    \end{align}
    then $\theta_1\leq 7/64$; which is due to Kim-Sarnak\cite{KS03}.
	To such a normalized form $f$ we can attach an $L$-function given by
	\begin{align}
		L(s,f)= \sum_{n\geq 1}\frac{\lambda_f(n)}{n^{s}}= \prod_{p<\infty}\left(1-\frac{\alpha_{f,1}(p)}{p^s}\right)^{-1}\left(1-\frac{\alpha_{f,2}(p)}{p^s}\right)^{-1},
	\end{align}
	where 
	\[\alpha_f(p)+\beta_f(p)=\lambda_f(p),\;\; \alpha_f(p)\beta_f(p)=1.\]
	The $L$-function $L(s,f)$ has an analytic continuation to the whole complex plane.
	Multiplying the product of Gamma factors
	\begin{align}
		\gamma(s,f)= \pi^{-s}\Gamma\left(\frac{s+\epsilon+it_j}{2}\right)\Gamma\left(\frac{s+\epsilon-it_j}{2}\right),
	\end{align}
	with $\epsilon=0$ if $f$ is even and $\epsilon=1$ if $f$ is odd, we form the completed $L$-function
	\begin{align}
		\Lambda(s,f)=\gamma(s,f)L(s,f),
	\end{align}
	which satisfies the functional equation 
	\[\Lambda(1-s,f)=\varepsilon(f)\Lambda(s,\overline f),\]
	where $\varepsilon(f)$ is the root number with $|\varepsilon(f)|=1$ and $\bar{f}$, the dual form.
    
	Next we illustrate a transformation formula which captures the automorphy of the form $f$ in terms of weighted sums of its Fourier coefficients. Such formulae are collectively known as a Voronoi type formula.
	\begin{proposition}[Voronoi Summation Formula]\label{Voronoi_Maass}
		Let $f$ be a normalized Hecke Maass cusp form as above, $\lambda_f(n)$ be its Fourier coefficients and $g$ be a compactly supported, smooth function on $(0,\infty)$. Let $a,q\in \mathbb{Z}$ with $(a,q)=1.$ Then
		\begin{align}\label{2.7}
			\sum_{n\geq 1} \lambda_f(n)e\left(\frac{an}{q}\right)g(n)=q\sum_{\pm} \sum_{n\geq 1}\frac{\lambda_f(n)}{n} e\left(\mp \frac{\bar an}{q}\right)G^{\pm} \left(\frac{n}{q^2}\right),
		\end{align}
		where,
		\begin{equation}\label{2.8}
			\begin{split}
				G^\pm(y) & =  \frac{\epsilon_f^{(1\mp 1)/2}}{4\pi^2 i}  \int_{(\sigma)} (\pi^2 y)^{-s} \left( \frac{\Gamma(\frac{1+s+it_f}{2})\Gamma(\frac{1+s-it_f}{2})} {\Gamma(\frac{-s+it_f}{2})\Gamma(\frac{-s-it_f}{2})} \mp \frac{\Gamma(\frac{2+s+it_f}{2})\Gamma(\frac{2+s-it_f}{2})} {\Gamma(\frac{1-s+it_f}{2})\Gamma(\frac{1-s-it_f}{2})} \right) \tilde{g}(-s)  ds \\
				& = \epsilon_f^{(1\mp 1)/2} y
				\int_{0}^{\infty} g(z) J^\pm_{f}\left(  4\pi\sqrt{yz} \right) dz,
			\end{split}
		\end{equation}
		with $\sigma>\theta_1-1$  and $\tilde{g}(s) = \int_{0}^{\infty} g(z) z^{s-1}  dx$ the Mellin transform of $g$, and
		\[
		J^+_{f}\left( z \right) = \frac{-\pi}{\sin(\pi i t_f)} \left( J_{2it_f}(z) - J_{-2it_f}(z) \right), \qquad
		J^-_f(z) = 4\cosh(\pi t_f) K_{2it_f}(z).
		\]
	\end{proposition}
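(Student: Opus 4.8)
\noindent\emph{Proof proposal.} The plan is to deduce \eqref{2.7} from Mellin inversion together with the functional equation of the additively twisted Dirichlet series
\begin{align*}
L_{a/q}(s,f):=\sum_{n\geq 1}\frac{\lambda_f(n)\,e(an/q)}{n^{s}}.
\end{align*}
Since $g$ is smooth with compact support in $(0,\infty)$, its Mellin transform $\tilde g(s)=\int_0^\infty g(z)z^{s-1}\,dz$ is entire and decays faster than any power of $|s|$ on vertical lines, so $g(n)=\frac{1}{2\pi i}\int_{(\sigma_0)}\tilde g(s)n^{-s}\,ds$ for every real $\sigma_0$. Choosing $\sigma_0>1$, so that the Dirichlet series converges absolutely by \eqref{2.2}, and interchanging summation with integration yields
\begin{align*}
\sum_{n\geq 1}\lambda_f(n)\,e\!\left(\tfrac{an}{q}\right)g(n)=\frac{1}{2\pi i}\int_{(\sigma_0)}\tilde g(s)\,L_{a/q}(s,f)\,ds.
\end{align*}

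\noindent The core step is the analytic continuation and functional equation of $L_{a/q}(s,f)$, which I would extract from the automorphy of $f$. Writing $\bar a$ for an inverse of $a$ modulo $q$ and picking $m\in\mathbb{Z}$ with $a\bar a-qm=1$, the matrix $\left(\begin{smallmatrix}-\bar a & m\\ q & -a\end{smallmatrix}\right)\in\mathrm{SL}_2(\mathbb{Z})$ sends $\tfrac{a}{q}+iy$ to $-\tfrac{\bar a}{q}+\tfrac{i}{q^{2}y}$, so that $f\!\left(\tfrac{a}{q}+iy\right)=f\!\left(-\tfrac{\bar a}{q}+\tfrac{i}{q^{2}y}\right)$. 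Feeding the Fourier expansion of $f$ into each side of this identity, Mellin-transforming in $y$ (the transform $\int_0^\infty f(\tfrac{a}{q}+iy)\,y^{s}\,\tfrac{dy}{y}$ being entire in $s$ since $f(\tfrac{a}{q}+iy)$ decays rapidly both as $y\to 0^{+}$ and as $y\to\infty$), and invoking $\int_0^\infty K_{it_f}(2\pi|n|y)\,y^{s}\,\tfrac{dy}{y}=\tfrac14\,\pi^{-s}|n|^{-s}\,\Gamma\!\big(\tfrac{s+it_f}{2}\big)\Gamma\!\big(\tfrac{s-it_f}{2}\big)$ along with the parity relation $\lambda_f(-n)=\epsilon_f\lambda_f(n)$, one arrives at a functional equation of the shape
\begin{align*}
L_{a/q}(s,f)=q^{\,1-2s}\sum_{\pm}\gamma^{\pm}(s)\,L_{\mp\bar a/q}(1-s,f),
\end{align*}
with no polar terms because $f$ is cuspidal, and with $\gamma^{\pm}(s)$ explicit ratios of Gamma functions. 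The two ratios figuring in \eqref{2.8}, and the sign $\epsilon_f^{(1\mp1)/2}$, emerge when one isolates $L_{a/q}(s,f)$ from the two relations supplied by $f$ and by its Maass weight-raised companion, equivalently by combining the even- and odd-type Gamma factors of $f$, which carry respectively the $K$-Bessel and the oscillatory Bessel kernels. Carrying out this automorphy computation and pinning down the precise Gamma factors and their combinations is the technical heart of the argument and its main obstacle; everything else is routine.

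\noindent It then remains to insert the functional equation into the contour integral, shift $\sigma_0$ to a line $\Re(s)=-\sigma_1$ with $\sigma_1$ sufficiently large, and expand each $L_{\mp\bar a/q}(1-s,f)$ as a Dirichlet series there; the contour shift is legitimate since $\tilde g$ is entire and rapidly decreasing, the $\gamma^{\pm}$ grow only polynomially in vertical strips by Stirling's formula, and $L_{a/q}(s,f)$ has no poles. Collecting terms reproduces
\begin{align*}
q\sum_{\pm}\sum_{n\geq 1}\frac{\lambda_f(n)}{n}\,e\!\left(\mp\frac{\bar a n}{q}\right)G^{\pm}\!\left(\frac{n}{q^{2}}\right),
\end{align*}
and, after the change of variable $s\mapsto -s$ (accounting for the $\tilde g(-s)$ and $(\pi^{2}y)^{-s}$ in \eqref{2.8}) followed by a deformation of the contour to $\Re(s)=\sigma$ with $\sigma>\theta_1-1$, the kernel $G^{\pm}$ becomes exactly the Mellin--Barnes integral in \eqref{2.8}. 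Finally, the second expression for $G^{\pm}$ follows by inserting $\tilde g(-s)=\int_0^\infty g(z)z^{-s-1}\,dz$, interchanging the order of integration, and evaluating the remaining $s$-integral by the classical Mellin--Barnes representations of $J_{2it_f}$, $J_{-2it_f}$ and $K_{2it_f}$, which assemble into the kernels $J^{\pm}_f(4\pi\sqrt{yz})$. The identity is classical; it can alternatively be obtained by unfolding a suitable Poincar\'e series, but the route via the additive-twist functional equation is the one I would record in detail.
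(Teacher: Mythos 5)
Your outline is correct and is essentially the argument behind the result the paper invokes: the paper offers no proof of its own, only citations to Miller--Schmid and to Appendix~A of Kowalski--Michel--VanderKam, and the latter establishes the formula exactly by the route you describe (Mellin inversion, the functional equation of the additively twisted $L$-series $L_{a/q}(s,f)$ derived from $f(\tfrac{a}{q}+iy)=f(-\tfrac{\bar a}{q}+\tfrac{i}{q^2y})$ together with its weight-raised companion to separate the even and odd Gamma factors, then a contour shift and the Mellin--Barnes representations of $J_{\pm 2it_f}$ and $K_{2it_f}$). Since the statement is quoted as a known result, your sketch — which correctly identifies the key automorphy relation, the $K$-Bessel Mellin transform, and the need for the second relation to isolate $L_{a/q}$ — is an adequate account of the standard proof.
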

        \begin{proof}
            See \cite{MS06}  Equations (1.12) and (1.15) and \cite{KMV02} Appendix A.
        \end{proof}

        Finally along with the Ramanujan bound on average \eqref{2.2}, we also have the following estimate on the Fourier coefficients in the $L^4$-sense, 
        \begin{Lemma}\label{L^4_norm_Fourier_Coeff}
        \begin{align*}
            \sum_{n\leq N} \frac{|\lambda_f(n)|^4}{n}\ll_\eps (1+|t_f|)^\eps N^\eps.
        \end{align*}
        \end{Lemma}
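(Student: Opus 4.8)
The plan is to write the generating Dirichlet series $D(s):=\sum_{n\ge1}\lambda_f(n)^4n^{-s}$ as a product of standard automorphic $L$-functions times a harmless correction factor, and then to read off $\sum_{n\le N}\lambda_f(n)^4/n$ by shifting a contour. Using the Gelbart--Jacquet symmetric-square lift $\mathrm{Sym}^2f$ on $GL_3$ and Kim's symmetric-fourth lift $\mathrm{Sym}^4f$ on $GL_5$, together with the isobaric identity $(f\otimes f)\otimes(f\otimes f)\cong 2\cdot\mathbf 1\boxplus 3\cdot\mathrm{Sym}^2f\boxplus\mathrm{Sym}^4f$, a comparison of Satake parameters prime by prime yields
\[
D(s)=\zeta(s)^2\,L(s,\mathrm{Sym}^2f)^3\,L(s,\mathrm{Sym}^4f)\,H(s),
\]
where $H(s)=\prod_pH_p(p^{-s})$ with each $H_p$ a polynomial in $p^{-s}$ of degree $\le 15$ whose coefficient of $p^{-js}$ is $\ll_j p^{4j\theta_1}$; this last fact follows by expressing $\lambda_f(p^k)$ through Chebyshev polynomials in the Satake parameter $\alpha_f(p)$ and invoking the Kim--Sarnak bound $|\alpha_f(p)|\le p^{\theta_1}$ with $\theta_1\le 7/64$. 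Since $7/64<1/8$, the series $H(s)$ converges absolutely and obeys $H(s)\ll 1$ (uniformly in $f$) in the half-plane $\Re s\ge 1-\delta_0$ for a fixed $\delta_0>0$ (in fact throughout $\Re s>\tfrac12+4\theta_1$), where it is holomorphic; as $L(s,\mathrm{Sym}^2f)$ and $L(s,\mathrm{Sym}^4f)$ are entire, $D(s)$ is holomorphic on $\Re s\ge 1-\delta_0$ except for a pole of order $2$ at $s=1$.

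Next I would smooth the sum: since $e^{-n/N}\ge e^{-1}$ for $n\le N$ and $\lambda_f(n)^4\ge0$,
\[
\sum_{n\le N}\frac{\lambda_f(n)^4}{n}\ \le\ e\sum_{n\ge1}\frac{\lambda_f(n)^4}{n}\,e^{-n/N}\ =\ \frac{e}{2\pi i}\int_{(2)}\Gamma(s)\,N^s\,D(1+s)\,ds
\]
by Mellin inversion, and then move the contour to $\Re s=-\delta$ for a small fixed $\delta=\delta(\eps)\in(0,\delta_0)$. The integrand has a pole of order $3$ at $s=0$ (the simple pole of $\Gamma(s)$ and the double pole of $\zeta(1+s)^2$); its residue is a polynomial of degree $2$ in $\log N$ whose coefficients are built from the values and first two derivatives at $s=1$ of $L(s,\mathrm{Sym}^2f)$, $L(s,\mathrm{Sym}^4f)$ and $H(s)$, hence --- by the standard bounds $L(1,\mathrm{Sym}^2f),L(1,\mathrm{Sym}^4f)\ll_\eps(1+|t_f|)^\eps$, their counterparts for the derivatives (Cauchy's formula inside a standard zero-free region), and $H(1),H'(1),H''(1)\ll1$ --- is $\ll_\eps(\log N)^2(1+|t_f|)^\eps$. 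For the shifted integral one has $|N^s|=N^{-\delta}\le1$ on $\Re s=-\delta$, while on $\Re s=1-\delta$ the convexity bounds give $\zeta(1-\delta+it)^2\ll_\eps(1+|t|)^{\delta+\eps}$, $L(1-\delta+it,\mathrm{Sym}^2f)^3\ll_\eps(1+|t_f|+|t|)^{9\delta/2+\eps}$, $L(1-\delta+it,\mathrm{Sym}^4f)\ll_\eps(1+|t_f|+|t|)^{5\delta/2+\eps}$ and $H(1-\delta+it)\ll1$; combined with the exponential decay of $\Gamma(-\delta+it)$ in $t$, the integral is $\ll_\eps N^{-\delta}(1+|t_f|)^{7\delta+\eps}$. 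Choosing $\delta=\eps$ makes this $\ll_\eps(1+|t_f|)^{8\eps}$, so altogether $\sum_{n\le N}\lambda_f(n)^4/n\ll_\eps(\log N)^2(1+|t_f|)^\eps\ll_\eps N^\eps(1+|t_f|)^\eps$ after renaming $\eps$.

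The one delicate point --- and the reason a more routine argument fails --- is keeping the dependence on $t_f$ down to $(1+|t_f|)^\eps$: moving the contour any appreciable distance past $\Re s=1$ would invoke the convexity bounds for the $GL_3$ and $GL_5$ symmetric-power $L$-functions well inside the critical strip, costing a fixed positive power of $t_f$. The remedy is to shift only infinitesimally, so that the convexity exponent, which vanishes as $\Re s\to1$, contributes merely $(1+|t_f|)^{O(\delta)}$, at the price of a saving $N^{-\delta}$ that is irrelevant once the target is only $N^\eps$. The sole non-elementary inputs are the functoriality of $\mathrm{Sym}^2f$ and $\mathrm{Sym}^4f$ and the Kim--Sarnak bound, the latter used only to ensure, uniformly in $f$, that $H(s)$ is holomorphic and $O(1)$ in a fixed half-plane $\Re s\ge1-\delta_0$ with $\delta_0>0$; this is exactly the point where one needs $\tfrac12+4\theta_1<1$, which holds because $\theta_1\le7/64$.
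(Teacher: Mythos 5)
Your argument is correct, but it is worth noting that the paper does not actually prove this lemma: it simply cites Lemma 2.1 of Hoffstein--Lockhart \cite{HL94}, so any self-contained proof is ipso facto ``a different route.'' Your proof is the modern streamlined version: the isobaric identity $(f\otimes f)\otimes(f\otimes f)\cong 2\cdot\mathbf 1\boxplus 3\cdot\mathrm{Sym}^2f\boxplus\mathrm{Sym}^4f$ is right (degrees $2+9+5=16=2^4$ check out), the correction factor $H(s)$ does converge absolutely for $\Re s>\tfrac12+4\theta_1=\tfrac{15}{16}$ by Kim--Sarnak, and the infinitesimal contour shift is exactly the right way to keep the $t_f$-dependence down to $(1+|t_f|)^{\eps}$. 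Hoffstein--Lockhart, writing in 1994, could not use Kim's $\mathrm{Sym}^4$ lift; their route goes through the Hecke relation $\lambda_f(n)^2=\sum_{d\mid n}\lambda_f(n^2/d^2)$ to reduce the fourth moment to the Rankin--Selberg convolution $L(s,\mathrm{Sym}^2f\times\mathrm{Sym}^2f)$, whose analytic continuation needs only Gelbart--Jacquet's $\mathrm{Sym}^2$ lift together with $GL(3)\times GL(3)$ Rankin--Selberg theory. The two approaches are morally the same (the degree-$9$ convolution factors as $\zeta\cdot L(\mathrm{Sym}^2)\cdot L(\mathrm{Sym}^4)$), but yours requires the stronger functoriality input while giving a cleaner factorization. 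Two harmless slips: the total convexity exponent on the shifted line is $\delta+\tfrac{9\delta}{2}+\tfrac{5\delta}{2}=8\delta$, not $7\delta$ (irrelevant once $\delta=\eps$); and no zero-free region is needed for the derivative bounds at $s=1$ --- Cauchy's formula on a circle of radius $1/\log C$ together with the convexity bound already in hand suffices. You should also record that $L(s,\mathrm{Sym}^4f)$ is entire because level-one Maass forms are not dihedral (or of solvable polyhedral type), so the $\mathrm{Sym}^4$ lift is cuspidal by Kim--Shahidi.
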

        \begin{proof}
            See \cite{HL94} Lemma 2.1.
        \end{proof}
	\subsection{Rankin Selberg \texorpdfstring{$L$-functions}{L-functions}} Given normalized Hecke Maass cusp forms $f$ and $g$ with spectral parameters $t_f$ and $t_g$, the Rankin Selberg convolution $L$-function is defined as 
	\begin{align}
		L(s,f\otimes g)&=\zeta(2s)\sum_{n\geq 1}\frac{\lambda_f(n)\lambda_g(n)}{n^{s}}= \prod_{i=1}^2\prod_{j=1}^2\left(1-\frac{\alpha_{f,i}(p)\alpha_{g,j}(p)}{p^s}\right)^{-1},
	\end{align}
	a degree $4$ Euler product. Rankin and Selberg proved that $L(s,f\otimes g)$ admits analytic continuation to the whole complex plane except when $f=\overline{g},$ in which case there are simple poles at $s=0,1$. Moreover the completed $L$-function satisfies the functional equation 
	\[\Lambda(s,f\otimes g)=\epsilon(f\otimes g)\Lambda(1-s,\overline f\otimes \overline g),\]
	where \[\Lambda(s,f\otimes g)= \gamma(s,f\otimes g)L(s,f\otimes g),\]
	$|\epsilon(f\otimes g)|=1.$ The Gamma factor $\gamma(s,f\otimes g),$ has the following expression
	\begin{align}\label{2.10}
		\gamma(s,f\otimes g)=\pi^{-2s} &\Gamma\left(\frac{s+i(t_f+t_g)+\nu}{2}\right)\Gamma\left(\frac{s-i(t_f+t_g)+\nu}{2}\right)\\ \nonumber
		&\times\Gamma\left(\frac{s+i(t_f-t_g)+\nu}{2}\right)\Gamma\left(\frac{s-i(t_f-t_g)+\nu}{2}\right),
	\end{align}
	with $\nu=0$ if $f$ and $g$ have the same parity and $\nu=1$ otherwise. 
	\subsection{The delta symbol} Let $\delta :\mathbb{Z}\rightarrow \{0,1\}$ be the Kronecker delta function defined as 
	\begin{align}
		\delta(n) = \begin{cases}
			1 & \text{if }\;\; n=0\\
			0 & \text{otherwise}
		\end{cases}
	\end{align}
	We will use $\delta(n)$ to separate oscillations in sums of the form
	\begin{align}
		\sum_{n\sim N} a(n)b(n),
	\end{align}
	where $\{a(n)\}_{n\geq 1}, \{b(n)\}_{n\geq 1}$ are arithmetic sequences of interest. We seek a suitable expansion of $\delta(n)$ in terms of trigonometric polynomials. The one we mention and will be using is the expansion due to Duke, Friedlander and Iwaniec. For this we pick any $Q\geq 2$. Then
	\begin{equation}\label{2.14}
		\delta(n)= \frac{1}{Q}\sum_{1\leq q\leq Q} \frac{1}{q}\; \sideset{}{^*}\sum_{a\bmod q} e\left(\frac{an}{q}\right)\int_{\mathbb{R}}g(q,x)e\left(\frac{nx}{qQ}\right)dx,
	\end{equation}
	where $g(q,x)$ is an analytic weight function satisfying 
	\begin{align}\label{properties_g(q,x)}
		g(q,x) &=1+h(q,x),\;\;\;\text{with}\;\;\;h(q,x)=O\left(\frac{1}{qQ}\left(\frac{q}{Q}+|x|\right)^A\right),\\
		\nonumber g(q,x)&\ll |x|^{-A}, \; \; \text{for any}\; A>1,\\ \nonumber
		x^j\frac{\partial^j}{\partial x^j} &g(q,x) \ll \log Q\min\left\{\frac{Q}{q},\frac{1}{|x|}\right\},
	\end{align}
	for any $j\geq 1$.
	The second property implies that the effective range of the integral in \eqref{2.14} is $[-Q^{\varepsilon},Q^\varepsilon]$. Also if $q\ll Q^{1-\varepsilon}$ and $x\ll Q^{-\varepsilon}$, then $g(q,x)$ can be replaced by $1$ at the cost of a negligible error. In the complimentary range we have $x^j \frac{\partial^j}{\partial x^j}g(q,x)\ll_{j,\eps} Q^{(j+1)\varepsilon}$, for any $j\geq 1$. Finally by Parseval and Cauchy we get 
	$$\int (|g(q,x)|+|g(q,x)|^2)dx\ll Q^\varepsilon.$$
	We summarize these observations in the following lemma 
	\begin{Lemma}\label{DFI_delta}
		For $Q\geq 2,$ one has 
		\begin{align}
			\delta(n)= \frac{1}{Q}\sum_{1\leq q\leq Q} \frac{1}{q}\; \sideset{}{^*}\sum_{a\bmod q} e\left(\frac{an}{q}\right)\int_{\mathbb{R}}W\left(\frac{x}{Q^\varepsilon}\right)g(q,x)e\left(\frac{nx}{qQ}\right)dx + O_A(Q^{-A}),
		\end{align}
		where $W$ is a nonnegative smooth function supported on $[-2,2]$ with $W\equiv 1$ on $[-1,1]$, $W^{(j)}\ll_j 1$ for all $j\geq 0$ and $g(q,x)$ satisfies the properties in \eqref{properties_g(q,x)}.
	\end{Lemma}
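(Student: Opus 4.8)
The plan is to read the lemma off directly from the Duke--Friedlander--Iwaniec expansion \eqref{2.14}: the only genuinely new content is that the $x$-integral there may be truncated to $|x|\leq Q^{\eps}$ at the cost of a negligible error, while the analytic properties collected in \eqref{properties_g(q,x)} are precisely those established in the original DFI construction and are simply quoted rather than reproved. So I would begin by recording that the bound $g(q,x)\ll|x|^{-A}$, valid for every $A>1$, makes the $x$-integral in \eqref{2.14} absolutely convergent, which legitimises all the manipulations below.

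Next I would insert the smooth cutoff by writing $1=W(x/Q^{\eps})+\bigl(1-W(x/Q^{\eps})\bigr)$ inside the integral. The term carrying $1-W(x/Q^{\eps})$ is supported on $|x|\geq Q^{\eps}$, and there the rapid decay of $g$ gives, for each fixed $q$,
$$\int_{|x|\geq Q^{\eps}}|g(q,x)|\,dx\ll_{A}\int_{Q^{\eps}}^{\infty}x^{-A}\,dx\ll_{A}Q^{-(A-1)\eps}.$$
I would then estimate the resulting multiple sum trivially: there are at most $q$ residues $a\bmod q$, each contributing a term of modulus $1$, and the outer weights satisfy $\tfrac1Q\sum_{1\leq q\leq Q}\tfrac1q\cdot q=1$, so the entire truncation error is $O_{A}\bigl(Q^{-(A-1)\eps}\bigr)$.

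Finally, since the exponent $A>1$ in the decay of $g$ is arbitrary while $\eps>0$ is fixed, applying the above with $A$ replaced by $1+A/\eps$ turns the truncation error into $O_{A}(Q^{-A})$, which is absorbed into the error term already present in the statement; the bounds $W^{(j)}\ll_{j}1$ are built into the choice of the bump function $W$, supported on $[-2,2]$ and equal to $1$ on $[-1,1]$. The only point requiring any care -- and it is a very mild one -- is the absolute convergence of the integral in \eqref{2.14}, which is exactly what the decay property of $g$ supplies; beyond that there is no real obstacle, the lemma being in essence a repackaging of \eqref{2.14} together with the known properties of the weight $g(q,x)$.
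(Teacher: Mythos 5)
Your argument is correct and is essentially the one the paper intends: the paper proves the lemma by citation (to \cite{IK04}, Chapter 20 and \cite{Hu21}, Lemma 15), and the discussion immediately preceding it already notes that the decay $g(q,x)\ll|x|^{-A}$ confines the effective range of the $x$-integral in \eqref{2.14} to $[-Q^{\eps},Q^{\eps}]$, which is exactly the truncation you carry out. Your tail estimate and the trivial summation over $a$ and $q$ are both sound, so nothing is missing.
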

    \begin{proof}
        See \cite{IK04}, Chapter $20$ and \cite{Hu21}, Lemma $15$.
    \end{proof}
	\subsection{Bessel functions} We quote \S\S $2.8$ of \cite{Hu24} mentioning a few properties of Bessel functions which we will need later on, while analyzing integral transforms arising from the Voronoi summation formula.  Let $r>0$ and $x\gg T^\varepsilon r,$ we have 
	\begin{equation}\label{2.16}
		\frac{J_{\pm 2ir}(2x)}{\sin \pi i r}= \frac{W(x)}{(4r^2+x^2)^{1/4}}  \exp{\left(\pm 2i\omega(x,r)\right)} + O_A(x^{-A})
	\end{equation}
	with
	\begin{equation*}
		\omega(x,r) = \left(r^2+x^2\right)^{1/2}\mp r\sinh^{-1}(r/x)
	\end{equation*}
	and 
	\begin{equation}\label{2.17}
		K_{2i\tau}(2x) \cosh(\pi \tau)  \ll x^{-1/2} \exp(-2x+ \pi|\tau|) \ll x^{-6} \exp(-x).
	\end{equation}
	\subsection{Stirling Approximation for Gamma functions} This section is borrowed from \S\S $2.7$ of \cite{Hu24}.
    
	For fixed $\sigma\in\mathbb{R}$, real $|t|\geq1000$ and any $A>0$, we have Stirling's formula
	\begin{equation*}%\label{eqn:Stirling}
		\Gamma(\sigma+it) = e^{-\frac{\pi}{2}|t|} |t|^{\sigma-\frac{1}{2}} \exp\left( it\log\frac{|t|}{e} \right) \left( g_{\sigma,A}(t) + O_{\sigma,J}(|t|^{-A}) \right),
	\end{equation*}
	where
	\[
	t^j \frac{\partial^j}{\partial t^j} g_{\sigma,A}(t) \ll_{j,\sigma,A} 1
	\]
	for all fixed $j\in \mathbb{N}_0$.
	Similarly, we have
	\begin{equation*}%\label{eqn:Stirling-}
		\frac{1}{\Gamma(\sigma+it)} = e^{\frac{\pi}{2}|t|} |t|^{-\sigma+\frac{1}{2}} \exp\left( -it\log\frac{|t|}{e} \right) \left(  h_{\sigma,A}(t) + O_{\sigma,A}(|t|^{-A}) \right),
	\end{equation*}
	where
	\[
	t^j \frac{\partial^j}{\partial t^j} h_{\sigma,A}(t) \ll_{j,\sigma,A} 1
	\]
	for all fixed $j\in \mathbb{N}_0$.
	Hence
	\begin{equation}\label{2.18}
		\frac{\Gamma(\sigma+it)}{\Gamma(\sigma-it)}
		= \exp\left( 2it\log\frac{|t|}{e} \right)
		\left( w_{\sigma,A}(t) + O_{\sigma,A}(|t|^{-A}) \right),
	\end{equation}
	where
	\[
	t^j \frac{\partial^j}{\partial t^j} w_{\sigma,A}(t) \ll_{j,\sigma,A} 1
	\]
	for all fixed $j\in \mathbb{N}_0$.
	\subsection{Oscillatory integrals}

	We borrow the following variants of integration by parts and stationary phase Lemmas from \S\S $8$ of \cite{BKY13} .

	\begin{Lemma}\label{repeated_integration_by_parts}
		Let $Y\geq1$. Let $X_0,\; V_0,\; R_0,\; Q_0>0$ and suppose that $w$ is a smooth function with  $ \mathrm{supp} w \subseteq [\alpha,\beta]$ satisfying $w^{(j)}(\xi) \ll_j X_0 V_0^{-j}$ for all $j\geq0$.
		Suppose that on the support of $w$, $h$ is smooth and satisfies that
		$h'(\xi)\gg R_0$ and $ h^{(j)}(\xi) \ll Y_0 Q_0^{-j}$, for all $j\geq2.$
		Then for arbitrarily large $A$ we have
		\[
		I = \int_{\mathbb{R}} w(\xi) e(h(\xi))  d \xi  \ll_A (\beta-\alpha)  X_0 \left[  \left(\frac{Q_0R_0}{\sqrt{Y_0}}\right)^{-A} + (R_0V_0)^{-A}  \right].
		\]
	\end{Lemma}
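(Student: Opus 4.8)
The plan is to prove the lemma by integrating by parts $A$ times against the non‑stationary oscillation $e(h)$, using the lower bound $h'\gg R_0$ to divide by $h'$ at each step. First I would dispose of the degenerate ranges: if $R_0V_0\leq 1$ or $Q_0R_0\leq\sqrt{Y_0}$, then the claimed bound already exceeds $(\beta-\alpha)X_0$, which dominates $I$ trivially via $|I|\leq(\beta-\alpha)\sup|w|\ll(\beta-\alpha)X_0$ (the $j=0$ case of the hypothesis on $w$). So I may assume $R_0V_0\geq1$ and $Q_0R_0\geq\sqrt{Y_0}$, and I recall the standing hypothesis $Y_0\geq1$. Using $(e(h))'=2\pi i h'e(h)$, define the operator $(\mathcal{D}g)(\xi)=-\frac{1}{2\pi i}\frac{d}{d\xi}\!\left(\frac{g(\xi)}{h'(\xi)}\right)$; since $w$ is smooth and compactly supported, $A$‑fold integration by parts produces no boundary terms and gives $I=\int_{\mathbb{R}}(\mathcal{D}^Aw)(\xi)\,e(h(\xi))\,d\xi$.

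The core of the argument is a structural description of $\mathcal{D}^nw$, proved by induction on $n$: it is a linear combination of $O_n(1)$ functions of the form
\[
\frac{w^{(m)}(\xi)\,\prod_{i=1}^{r}h^{(k_i)}(\xi)}{h'(\xi)^{\,n+r}},\qquad m\geq0,\ r\geq0,\ k_i\geq2,\ \ m+\sum_{i=1}^{r}(k_i-1)=n.
\]
Applying $\mathcal{D}$ to such a term and using the product and quotient rules, the derivative either (i) raises $w^{(m)}$ to $w^{(m+1)}$, or (ii) raises one $h^{(k_i)}$ to $h^{(k_i+1)}$, or (iii) differentiates the denominator, producing a new numerator factor $h''$ (of order $2$) and increasing the denominator exponent by $1$; in each case the invariant $m+\sum(k_i-1)$ increases from $n$ to $n+1$ and the denominator exponent becomes $(n+1)+r'$, which is exactly the required shape. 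The number of terms grows by a factor $O(n)$ at each step, hence stays $O_A(1)$ for $n=A$ fixed.

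It then remains to estimate a single term. By the hypotheses $|w^{(m)}|\ll_m X_0V_0^{-m}$, $|h^{(k_i)}|\ll Y_0Q_0^{-k_i}$ (valid since $k_i\geq2$), and $|h'|^{-1}\ll R_0^{-1}$; combined with $\sum_ik_i=(n-m)+r$ this gives
\[
\frac{w^{(m)}\prod h^{(k_i)}}{(h')^{n+r}}\ \ll_n\ X_0\,(R_0V_0)^{-m}(Q_0R_0)^{-(n-m)}\Big(\tfrac{Y_0}{Q_0R_0}\Big)^{r},\qquad 0\leq r\leq n-m.
\]
If $Y_0\leq Q_0R_0$ I drop the last factor and use $\sqrt{Y_0}\geq1$ to replace $(Q_0R_0)^{-1}$ by $(Q_0R_0/\sqrt{Y_0})^{-1}$; if $Y_0>Q_0R_0$ I bound the last factor by its value at $r=n-m$, namely $(Q_0R_0/\sqrt{Y_0})^{-2(n-m)}$, and then use $Q_0R_0/\sqrt{Y_0}\geq1$ to discard one of the two exponents. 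In either case, since both $(R_0V_0)^{-1}$ and $(Q_0R_0/\sqrt{Y_0})^{-1}$ are $\leq1$, the quantity $(R_0V_0)^{-m}(Q_0R_0/\sqrt{Y_0})^{-(n-m)}$ is at most $(R_0V_0)^{-n}+(Q_0R_0/\sqrt{Y_0})^{-n}$. Summing the $O_n(1)$ terms, taking $n=A$, and integrating over the support (of measure $\beta-\alpha$) yields the stated bound. The only genuinely delicate point is this last bookkeeping step — in particular, verifying that $Y_0$ enters the final estimate only through $\sqrt{Y_0}$, which relies on the collapse of the squared exponent in the regime $Y_0>Q_0R_0$ together with the normalization $Q_0R_0\geq\sqrt{Y_0}$; the rest is the standard non‑stationary phase routine (cf. \cite{BKY13}, \S8).
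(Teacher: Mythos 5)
Your proof is correct: the inductive description of $\mathcal{D}^n w$, the term-by-term estimate, and the final bookkeeping (including the treatment of the degenerate ranges and the two cases $Y_0\lessgtr Q_0R_0$) all check out. The paper itself offers no proof beyond the citation to Lemma 8.1 of \cite{BKY13}, and your argument is exactly the standard repeated-integration-by-parts proof given there, so this is essentially the same approach.
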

        \begin{proof}
            See \cite{BKY13} Lemma $8.1$
        \end{proof} 
	\begin{proposition}\label{stationary_phase}
		Let $0 < \delta < 1/10$, $X_0, Y_0, V_0, \Omega, Q_0 > 0$, $Z := Q_0 + X_0 + Y_0 + \Omega+1$,  and assume that
		\begin{equation}\label{importantconditions}
			Y_0 \geq Z^{3 \delta}, \quad \Omega \geq V_0 \geq \frac{Q_0Z^{ \frac{\delta}{2}} }{Y_0^{1/2}}.\end{equation} Suppose that $w$ is a smooth function on $\Bbb{R}$ with support on an  interval $J$ of length $\Omega$, satisfying
		\begin{equation*}
			w^{(j)}(t) \ll_j X_0 V_0^{-j}
		\end{equation*}
		for all $j \in \Bbb{N}_0$. Suppose $h$ is a smooth function on $J$ such that there exists a unique point $t_0 \in J$ such that $h'(t_0) = 0$, and furthermore
		\begin{equation}\label{diffh}
			h''(t) \gg Y_0 Q_0^{-2}, \quad h^{(j)}(t) \ll_j Y_0 Q_0^{-j}, \qquad \text{for } j=1,2, 3, \dots, t \in J. 
		\end{equation}
		Then the integral $I  $ in Lemma \ref{repeated_integration_by_parts}
		has an asymptotic expansion of the form
		\begin{equation}
			\label{eq:statphase}
			I  = \frac{ e^{ih(t_0)}}{\sqrt{h''(t_0)}} \sum_{n \leq 3 \delta^{-1} A} p_n(t_0)  + O_{A,\delta}(Z^{-A}), \quad p_n(t_0) = \frac{\sqrt{2\pi} e^{\pi i/4}}{n!} \Big(\frac{i}{2 h''(t_0)}\Big)^n  G^{(2n)}(t_0),
		\end{equation}
		where $A > 0$ is arbitrary, and
		\begin{equation}\label{defG}
			G(t) = w(t) e^{i H(t)}, \qquad H(t) = h(t) - h(t_0) - \frac{1}{2} h''(t_0) (t-t_0)^2.
		\end{equation}
		Furthermore, each $p_n$ is a rational function in $h'', h''', \dots$, satisfying
		\begin{equation}
			\label{eq:pnderiv}
			\frac{d^j}{d t_0^j} p_n(t_0) \ll_{j,n} X_0(V_0^{-j} + Q_0^{-j}) \big((V_0^2 Y_0/Q_0^2)^{-n} + Y_0^{-n/3}\big).
		\end{equation}
	\end{proposition}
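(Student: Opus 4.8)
The statement is the standard stationary phase asymptotic expansion, and the plan is to follow the classical route (as in \cite{BKY13}): split the integral into a piece localized at the stationary point $t_0$ and a complementary piece, evaluate the former by an exact Gaussian computation, and dispose of the latter by repeated integration by parts. Concretely, I would first insert a smooth partition of unity adapted to the scale $Q_0 Y_0^{-1/2} Z^{\delta/2}$, writing $w = w_0 + w_1$ with $w_0$ supported in $|t-t_0| \ll Q_0 Y_0^{-1/2} Z^{\delta/2}$ and $w_1$ supported where $|t-t_0| \gg Q_0 Y_0^{-1/2} Z^{\delta/2}$, with the natural derivative bounds inherited from $w$. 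On the support of $w_1$, since $h'(t_0)=0$ and $h''(t)\gg Y_0 Q_0^{-2}$ by \eqref{diffh}, the mean value theorem gives $|h'(t)| \gg Y_0 Q_0^{-2}|t-t_0| \gg Y_0^{1/2} Q_0^{-1} Z^{\delta/2}$, so Lemma \ref{repeated_integration_by_parts}, applied with $R_0 \asymp Y_0^{1/2}Q_0^{-1}Z^{\delta/2}$ and with $Y_0,Q_0$ in the roles of the higher-derivative parameters there, yields $\int w_1(t)\,e(h(t))\,dt \ll_A Z^{-A}$; the conditions \eqref{importantconditions} are exactly what make the gaining ratios genuine powers of $Z^\delta$.

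For the main piece I would Taylor-expand the phase about $t_0$, writing $h(t) = h(t_0) + \tfrac{1}{2} h''(t_0)(t-t_0)^2 + H(t)$ with $H$ the cubic-and-higher remainder of \eqref{defG}, so that
\begin{align*}
\int_{\R} w_0(t)\,e(h(t))\,dt = e^{ih(t_0)}\int_{\R} G(t)\, e^{\frac{i}{2}h''(t_0)(t-t_0)^2}\,dt, \qquad G(t) = w_0(t)\,e^{iH(t)}.
\end{align*}
The inner integral is then computed exactly via the Gaussian identity: for $\rho\neq 0$,
\begin{align*}
\int_{\R} G(t)\,e^{\frac{i}{2}\rho(t-t_0)^2}\,dt = \frac{\sqrt{2\pi}\,e^{i\pi/4}}{\sqrt{\rho}}\sum_{n\geq 0}\frac{1}{n!}\left(\frac{i}{2\rho}\right)^n G^{(2n)}(t_0),
\end{align*}
which follows by expanding the Fourier transform of the Gaussian and reading off Taylor coefficients, or equivalently from the operator identity $\int_{\R} G(t)\,e^{\frac{i}{2}\rho(t-t_0)^2}\,dt = \frac{\sqrt{2\pi}\,e^{i\pi/4}}{\sqrt{\rho}}\big(e^{\frac{i}{2\rho}\partial^2}G\big)(t_0)$. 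Taking $\rho = h''(t_0)$, truncating at $n\leq 3\delta^{-1}A$, and estimating the tail produces \eqref{eq:statphase} with $p_n$ as stated.

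It then remains to pin down the size and derivative bounds. For the tail of the Gaussian series and for \eqref{eq:pnderiv} I would differentiate the explicit expression for $p_n$ — a fixed polynomial in $h'', h''', \dots$ paired with $G^{(2n)}(t_0)$ — using the Fa\`a di Bruno formula together with the hypotheses $h^{(j)}(t)\ll_j Y_0 Q_0^{-j}$ and $w^{(j)}(t)\ll_j X_0 V_0^{-j}$. Since $H^{(j)}(t)\ll Y_0 Q_0^{-j}$ for $j\geq 3$, each derivative landing on $G$ costs $V_0^{-1}+Q_0^{-1}$, while each of the $n$ factors arising from $G^{(2n)}(t_0)$ carries the gain $(V_0^2 Y_0/Q_0^2)^{-n} + Y_0^{-n/3}$ coming from the two competing scales $V_0$ and $Q_0 Y_0^{-1/2}$ on which $G$ varies; combined with the localization step, the same estimates show the truncation error is $O_{A,\delta}(Z^{-A})$.

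The genuinely delicate point — and the only real obstacle — is this last bookkeeping: checking that successive terms of the expansion, and all their derivatives, decay by a fixed power of $Z^{\delta}$. This is precisely where \eqref{importantconditions} enters, in particular $V_0 \geq Q_0 Z^{\delta/2} Y_0^{-1/2}$ and $Y_0 \geq Z^{3\delta}$: the first forces the effective Gaussian width $Q_0 Y_0^{-1/2}$ to be smaller than the amplitude scale $V_0$ by a factor $Z^{\delta/2}$, so that the remainder $H$ and its derivatives are genuinely small on the relevant range, and the second ensures the resulting per-term gain is a true power of $Z$. Everything else is routine.
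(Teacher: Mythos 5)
The paper offers no proof of its own here: it quotes this verbatim from Blomer--Khan--Young and simply cites \cite{BKY13}, Proposition 8.2, whose proof is exactly the argument you outline (tail removal at scale $Q_0Y_0^{-1/2}Z^{\delta/2}$ via the integration-by-parts lemma, then the quadratic-phase/propagator expansion and Fa\`a di Bruno bookkeeping). Your proposal is correct and takes essentially the same route; the only imprecision is calling the Gaussian series an ``exact'' evaluation --- for compactly supported $G$ it is an asymptotic expansion with a quantitative remainder (H\"ormander), not a convergent series --- but your truncation step already treats it that way.
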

        \begin{proof}
            See \cite{BKY13} Proposition $8.2$.
        \end{proof}
	\subsection{Weight Functions} We mention some conventions about smooth weight functions from \S\S $2.5$ of \cite{Hu24}. Let $F$ be an index set and $X=X_i :F\rightarrow \mathbb{R}_{\geq1}$ be a function of $i \in F$. A family of $\{w_i\}_{i\in F}$ of smooth functions supported on a product of dyadic intervals in $\mathbb{R}_{>0}^d$ is called \emph{$X$-inert} if for each $j=(j_1,\ldots,j_d) \in \mathbb{Z}_{\geq0}^d$ we have
	\[
	\sup_{i\in{F}} \sup_{(x_1,\ldots,x_d) \in \mathbb{R}_{>0}^d}
	X_i^{-j_1-\cdots -j_d} \left| x_1^{j_1} \cdots x_d^{j_d} w_T^{(j_1,\ldots,j_d)} (x_1,\ldots,x_d) \right|
	\ll_{j_1,\ldots,j_d} 1.
	\]

	For a $T^\varepsilon$-inert function $V$, we may separate variables in $V(x_1, \ldots , x_d)$ by first inserting a redundant function $V (x_1) \cdots V (x_d)$ that is 1 on the support of $V$ and then applying Mellin inversion
	\begin{multline*}
		V (x_1, \ldots , x_d) = V (x_1, \ldots , x_d)V (x_1) \cdots V (x_d)
		\\
		=  \frac{1}{(2\pi i)^d} \int_{(\varepsilon)}\cdots \int_{(\varepsilon)} \tilde{V}(s_1,\ldots,s_d)
		(V (x_1) \cdots V (x_d) x_1^{-s_1} \cdots x_n^{-s_d} ) d s_1 \cdots d s_d,
	\end{multline*}
	where $\tilde{V}(s_1,\ldots,s_d)=\int_{0}^{\infty}\cdots\int_{0}^{\infty} V(x_1, \ldots , x_d) x_1^{s_1-1} \cdots x_d^{s_d-1} d x_1 \cdots d x_d$ is the Mellin transform of $V$.
	Here we can truncate the vertical integrals at height $|\Im s_j| \ll T^{2\varepsilon}$ at the cost of a negligible error $O_A(T^{-A})$.
	We will often separate variables in this way without explicit mention.
	\subsection{Initial setup and outline of the proof}
	Let $f$ and $g$ be defined as earlier, with spectral parameters $t_f, t_g\asymp T$ respctively, with $T\rightarrow \infty$. Let $\lambda_f(n)$ and $\lambda_g(n)$ be the normalized Fourier coefficients of $f$ and $g$ respectively. We want to analyse the Rankin Selberg $L$-series $L(s,f\otimes g)$ at the point $s=1/2+it$, where $t=t_f+t_g$. Our initial strategy is to  to express $L(1/2+it,f\otimes g)$ as a weighted Dirichlet Series, which corresponds to taking a smooth dyadic partition of its approximate functional equation.
	\begin{Lemma}\label{Approx_func_eq}
		Let $0<\theta<3/4$, and $T\rightarrow \infty$. With $t=t_f+t_g$, $t_f,t_g\asymp T$, one has 
		\begin{align}\label{2.24}
			L(1/2+it,f\otimes g)\ll  T^\varepsilon\sup_{T^{3/2-\theta}\ll N\ll  T^{3/2+\varepsilon}} \frac{|S(N)|}{\sqrt{N}}+ T^{3/4-\theta+\varepsilon},
		\end{align}
		where
		\begin{align}
			S(N)= \sum_{n=1}^{\infty}\lambda_f(n)\lambda_g(n)n^{-it}V(n/N),
		\end{align}
		for some smooth function supported in $[1,2]$, satisfying $V^{(j)}\ll_j 1$ for all $j\geq 0$ and normalised so that $\int V(y)\mathop{dy}=1.$
	\end{Lemma}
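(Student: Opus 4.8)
The plan is to derive the bound \eqref{2.24} from the approximate functional equation for $L(1/2+it,f\otimes g)$ together with a dyadic subdivision, using the Ramanujan-on-average bound \eqref{2.2} for $\lambda_f$ and $\lambda_g$ to handle the tail. First I would recall the standard approximate functional equation: writing $L(1/2+it,f\otimes g)=\zeta(1+2it)\sum_{n}\lambda_f(n)\lambda_g(n)n^{-1/2-it}$ (up to the $\zeta(2s)$-factor which on the line $\Re s=1/2$ contributes at most $T^{\varepsilon}$ and at worst a polynomial in $\log T$, being bounded away from the pole of $\zeta$ since $t=t_f+t_g\asymp T$), the functional equation \eqref{2.10} with analytic conductor $C(f\otimes g,t)\asymp T^{2}|t_f-t_g|\cdot$(bounded factors near $t=t_f+t_g$) of size roughly $T^{3}$ gives
\begin{align*}
    L(1/2+it,f\otimes g)\ll T^{\varepsilon}\Big(\sup_{N\ll C^{1/2+\varepsilon}}\frac{1}{\sqrt N}\Big|\sum_{n}\lambda_f(n)\lambda_g(n)n^{-it}U(n/N)\Big|+\text{dual term}\Big),
\end{align*}
where $U$ is a fixed smooth bump and the dual sum has the same shape by the functional equation and $|\varepsilon(f\otimes g)|=1$; since $C^{1/2}\asymp T^{3/2}$ this already produces the range $N\ll T^{3/2+\varepsilon}$.

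Next I would insert a smooth dyadic partition of unity $1=\sum_{N}V_N(y)$ (with $V_N$ supported in $[N,2N]$, $y^jV_N^{(j)}\ll_j 1$) into the $n$-sum, so that the contribution of dyadic blocks of size $N$ is $\ll N^{-1/2}|S(N)|$ with $S(N)$ as in the statement; after rescaling one may take a single normalized profile $V$ supported in $[1,2]$ with $\int V=1$ (the normalization $\int V(y)\,dy=1$ is harmless: any fixed smooth bump can be written as a bounded combination of such, or one simply absorbs the constant into the $T^{\varepsilon}$). For the small blocks $N\ll T^{3/2-\theta}$ I would bound trivially: by Cauchy--Schwarz and \eqref{2.2} applied to both $f$ and $g$,
\begin{align*}
    \sum_{n\sim N}|\lambda_f(n)\lambda_g(n)|\le\Big(\sum_{n\sim N}|\lambda_f(n)|^2\Big)^{1/2}\Big(\sum_{n\sim N}|\lambda_g(n)|^2\Big)^{1/2}\ll T^{\varepsilon}N,
\end{align*}
so each such block contributes $\ll N^{-1/2}\cdot T^{\varepsilon}N=T^{\varepsilon}N^{1/2}$, and summing the (logarithmically many) dyadic $N\le T^{3/2-\theta}$ gives $\ll T^{\varepsilon}(T^{3/2-\theta})^{1/2}=T^{3/4-\theta+\varepsilon}$, which is exactly the error term in \eqref{2.24}. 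The remaining blocks with $T^{3/2-\theta}\ll N\ll T^{3/2+\varepsilon}$ are retained, and taking the supremum over these $N$ gives the main term.

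The main technical point — and the only place any care is needed — is justifying that the effective length of the Dirichlet series in the approximate functional equation is $\asymp C(f\otimes g,t)^{1/2}\asymp T^{3/2}$ rather than the generic $T^{2}$; this is precisely the conductor-dropping input and follows from the explicit shape of the Gamma factor \eqref{2.10} at $s=1/2+it$, $t=t_f+t_g$. Concretely, two of the four Gamma factors, $\Gamma\big(\tfrac{1/2+it+i(t_f+t_g)+\nu}{2}\big)$ and $\Gamma\big(\tfrac{1/2+it-i(t_f+t_g)+\nu}{2}\big)$, have imaginary parts $\asymp T$ and $\asymp 1$ respectively, while the other two have imaginary parts of size $\asymp T$ and $\asymp |t_f-t_g|$, which only appears in \eqref{2.24} through the exponent $3/4$ once one notes $|t_f-t_g|\le cT$; using Stirling \eqref{2.18} to compute the analytic conductor from the ratio $\gamma(1-s,\cdot)/\gamma(s,\cdot)$ one reads off $C\asymp T^{3}$ (the worst case, $|t_f-t_g|\asymp T$), so $C^{1/2}\asymp T^{3/2}$. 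I would also note that one should choose the contour shift in the approximate functional equation to avoid the possible poles at $s=0,1$ when $f=\bar g$ (harmless since $t\asymp T\neq 0$ keeps us off the real axis), and that the weight $V$ in each dyadic piece depends on $N$ but is uniformly $T^{\varepsilon}$-inert, so the supremum over $N$ is legitimate. Apart from this conductor bookkeeping the argument is entirely standard.
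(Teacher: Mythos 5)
Your proposal follows essentially the same route as the paper, whose proof consists of invoking the approximate functional equation from \cite{IK04}, Section 5.2, inserting a dyadic partition, and disposing of the blocks $N\ll T^{3/2-\theta}$ by Cauchy's inequality and the Ramanujan bound on average \eqref{2.2}, which is exactly your treatment of the tail and gives the error term $T^{3/4-\theta+\eps}$.

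One detail in your conductor bookkeeping is off, though the conclusion survives. At $s=1/2+it$ with $t=t_f+t_g$, the four Gamma factors in \eqref{2.10} have arguments with imaginary parts $\tfrac12(t\pm(t_f+t_g))$ and $\tfrac12(t\pm(t_f-t_g))$, i.e.\ of sizes $t_f+t_g\asymp T$, $O(1)$, $t_f\asymp T$ and $t_g\asymp T$: three factors of size $T$ and one bounded, so $C(f\otimes g,t)\asymp T^3$ unconditionally, with no factor of size $|t_f-t_g|$ appearing. The conductor $T^2|t_f-t_g|$ that you quote belongs to the other special point $t=t_f-t_g$, the setting of Theorem \ref{Theorem2}, not to $t=t_f+t_g$. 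Since your formula still yields $C\ll T^3$, the truncation at $N\ll C^{1/2+\eps}\ll T^{3/2+\eps}$ and hence the lemma are unaffected.
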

	\begin{proof} 
		See \cite{IK04}, Section $5.2$. Using Cauchy's inequality and Ramanujan bound on average \eqref{2.2} for the range $N\ll T^{3/2-\theta},$ one obtains the second term in the RHS of \eqref{2.24}.
	\end{proof}
	\begin{remark} 
		Upon estimating $S(N)$ using Cauchy's inequality and Ramanujan bound on average \eqref{2.2}, we get $L(1/2+it,f\otimes g)\ll_\varepsilon T^{3/4+\varepsilon}$. Hence, in order to obtain subconvexity we need to obtain cancellations in $S(N),$ for $N$ roughly of the size $T^{3/4}$.
	\end{remark}
	\subsubsection{Application of the delta symbol} Writing $t=t_f+t_g$, we follow \cite{Mun22} to separate the oscillatory terms $\lambda_f(n)n^{-it_f}$ and and $\lambda_g(n)n^{-it_g}$ involved in $S(N)$. Expressing $S(N)$ as
	\begin{align}
		\sum_{n,m=1}^\infty \lambda_f(n)n^{-it_f}\lambda_g(m)m^{-it_g}\delta(n-m),
	\end{align}
	we use Lemma \ref{DFI_delta} with $Q=\sqrt{N/K}$ for some $T^\varepsilon\ll K=T^{1-\eta}\ll T^{1-\varepsilon}$ ($\eta>0$ to be chosen optimally later) to expand $\delta(n-m)$ and obtain that
	\begin{align}\label{2.27}
		S(N)= & \frac{1}{Q}\int_{\mathbb{R}}W\left(\frac{x}{Q^\varepsilon}\right)\sum_{1\leq q\leq Q} \frac{g(q,x)}{q}\; \sideset{}{^*}\sum_{a\bmod q}\\
		\nonumber &\quad \quad \sum_{n=1}^\infty \lambda_f(n)e\left(\frac{an}{q}\right)n^{-it_f}e\left(\frac{nx}{qQ}\right)V\left(\frac{n}{N}\right)\\
		\nonumber &\quad\;\;\;\, \sum_{m=1}^\infty \lambda_g(m) e\left(-\frac{am}{q}\right)m^{-it_g}e\left(-\frac{mx}{qQ}\right)U\left(\frac{m}{N}\right)dx+ O_A(T^{-A}),
	\end{align}
	upto a negligible error term, $U$ being a non-negative smooth function supported on $[1/2,5/2]$, with $U\equiv 1$ on $[1,2]$ and satisfying $U^{(j)}\ll_j 1$ for all $j\geq 0$. Notice that here we do not incorporate the $v$-integral, devised by Munshi (\cite{Mun15}, \cite{Mun22}), as the conductor lowering mechanism. Choosing $Q=\sqrt{N/K}$, inherently introduces an extra oscillation from (of size comparable to $K$ generically) the $x$- integral to the delta symbol. This is not harmful per se as we will be able to remove the $x$-integral completely by repeated integration by parts and the first derivative estimate (see \S \ref{simplification_of_I}).
	
	At this point if we estimate $S(N)$ trivially, employing Cauchy's inequality and Ramanujan bound on average \eqref{2.2}, we obtain $S(N)\ll_\varepsilon N^{2+\varepsilon}$. Therefore for cancellation we need to save $N$ plus something extra in $S(N)$. Before moving forward we introduce a smooth dyadic partition of unity in the $x$-integral and the $q$-sum. Taking supremum over the dydadic sums we have
	\begin{align}\label{2.28}
		S(N) \ll_{\varepsilon} T^\varepsilon\sup_{\substack{1\ll C\ll Q \\ T^{-100} \ll X\ll Q^\varepsilon\\ \pm}} |S_{\pm}(N,C,X)| + T^{-96+\varepsilon},
	\end{align}
	where 
	\begin{align}\label{2.29}
		S_{\pm}(N,C,X) &= \frac{1}{Q}\int_{\mathbb{R}}W\left(\frac{\pm x}{X} \right)\sum_{q\sim C} \frac{g(q,x)}{q}\; \sideset{}{^*}\sum_{a\bmod q}\\
        \nonumber
		&\quad \quad \sum_{n=1}^\infty \lambda_f(n)e\left(\frac{an}{q}\right)n^{-it_f}e\left(\frac{nx}{qQ}\right)V\left(\frac{n}{N}\right)\\
        \nonumber
		&\quad\;\;\;\, \sum_{m=1}^\infty \lambda_g(m) e\left(-\frac{am}{q}\right)m^{-it_g}e\left(-\frac{mx}{qQ}\right)U\left(\frac{m}{N}\right)dx.
	\end{align}
	\begin{remark}
		The rest of the paper is dedicated towards obtaining estimates $S_+(N,C,X)$ only. The estimation of $S_-(N,C,X)$ would be exactly similar after an initial change of variables $x\rightsquigarrow -x.$
	\end{remark}
        \begin{remark}
            The merit of keeping the $GL(1)$ oscillation $n^{it_f}\,(\,/\, n^{it_g})$ together with the respective Hecke-eigenvalue (or Fourier coefficient) $\lambda_f(n)\,(\,/\lambda_g(m))$ is that individual conductors of the $n$, $m$ sums decrease from $T^2Q^2$ to $TKQ^2$ (in the generic case), enabling us to save significantly from the Voronoi summation formula. 
        \end{remark}
	\subsubsection{Sketch of the proof}\label{sketch}
	We now discuss a brief sketch of our proof. For the sketch we assume generically that $N=T^{3/2}$ (then $Q=\sqrt{N/K}=T^{3/4}/\sqrt{K}$), $X= 1$, $C=Q$  and $K>T^{1/2}$ and denote by abuse of notation $S_+(N,C,X)$ as $S(N)$. The reason behind the assumption on $K$ will be apparent as the details of the proof evolve in the later sections. Let $t_{f,g}=\frac{t_f}{t_g}\asymp 1$.

    As a standard procedure after separating oscillations, we apply Voronoi summation formula to the $n$ and $m$ sums, which then turn out to be
    \begin{align}
        q\sum_{n\geq 1}\frac{\lambda_f(n)}{n}e\left(-\frac{\bar{a}n}{q}\right)G^1_x\left(\frac{n}{q^2}\right)
    \end{align}
    and
    \begin{align}
        q\sum_{m\geq 1}\frac{\lambda_g(n)}{m}e\left(\frac{\bar{a}m}{q}\right)G^2_x\left(\frac{m}{q^2}\right).
    \end{align}
    The analysis of the integral transforms $G^j_x(y)$ ($j=1,2$) is performed in \S \ref{Prem_Int_transform}, where (in the generic case) we obtain upto bounded scalar multiples and negligible error terms,
    \begin{align}
        G^1_x\left(\frac{n}{q^2}\right)=N^{1/2}K^{1/2}&\left(\frac{n}{q^2}\right)^{1/2+it_f} \\
            \nonumber
            &\int V_1(\tau_1)e\left(\frac{1}{2\pi} \left(K\tau_1 \log\frac{xq}{2\pi^2nQ}+(K\tau_1-2t_f)\log\frac{2t_f-K\tau_1}{2e}\right)\right)d\tau
    \end{align}
    and
    \begin{align}
        G^2_x\left(\frac{m}{q^2}\right)=N^{1/2}&K^{1/2}\left(\frac{m}{q^2}\right)^{1/2+it_g}\\
            \nonumber
            &\int U_2(\tau_2)e\left(\frac{1}{2\pi} \left(-K\tau_2 \log\frac{xq}{2\pi^2mQ}-(K\tau_2+2t_g)\log\frac{2t_g+K\tau_2}{2e}\right)\right)d\tau_2,
    \end{align}
    for compactly supported $T^\eps$-inert weight functions $V_1$ and $U_2$ and 
    with contributions only due to the dual range $m,n\asymp KTQ^2/N=T$. Thus 
    \begin{align}
        S(N)\ll \frac{NK}{Q}&\sum_{q\sim Q}\frac{1}{q^{1+2it}} \sideset{}{^*}\sum_{a\bmod q}\\ \nonumber&\sum_{\substack{n\asymp T\\m\asymp T}} \frac{\lambda_f(n)\lambda_g(m)}{n^{1/2-it_f}m^{1/2-it_g}} e\left(\frac{\bar{a}(m-n)}{q}\right)\mathcal{I}(m,n,q),
    \end{align}
    where $\mathcal{I}(m,n,q)$ is the three fold integral
    \begin{align*}
        \int_{x\asymp 1}g(q,x)\int\int V_1(\tau_1)U_2(\tau_2)e\left(\frac{1}{2\pi}h(\tau_1,\tau_2,x)\right)\mathop{dx}.
    \end{align*}
    where 
    \begin{align*}
        h(\tau_1,\tau_2,x)&= K\tau_1 \log\frac{xq}{2\pi^2nQ}+(K\tau_1-2t_f)\log\frac{2t_f-K\tau_1}{2e}\\
        &-K\tau_2\log\frac{xq}{2\pi^2mQ}-(K\tau_2+2t_g)\log\frac{2t_g+K\tau_2}{2e}.
    \end{align*}
    Next we simplify $\mathcal{I}(m,n,q)$ using stationary phase analysis and obtain further restrictions on the dual ranges. This part forms the one of the key ingredients in the proof as we are able to save considerably more than the usual (square root) from the integrals. The idea is to first exploit the $x$-integral using integration by parts to obtain the restriction $\tau_1-\tau_2\ll 1/K$. Changing variables $\tau_2=\tau_1+u$, with $|u|\ll 1/K$ and estimating the $\tau_1$-integral by stationary phase analysis we obtain that when $n-mt_{f,g}\asymp K$ it approximately equals,
    \begin{align*}
        \frac{\sqrt{T}}{K}V_3\left(\frac{n-mt_{f,g}}{K}\right)e\left(\frac{1}{2\pi}\left(2t\log \frac{e(n+m)}{t_f+t_g}-2t_f\log n-2t_g\log m\right)\right),
    \end{align*}
    for a compactly supported and $T^\eps$-inert function $V_3$
    and is negligibly small otherwise. The $a$-sum modulo $q$, which is a Ramanujan sum, generically reduces to the congruence condition
    \begin{align}
        q\mathds{1}_{m\equiv n\bmod q}.
    \end{align}
    Then, using Ramanujan bound on average for the Fourier coefficients, $S(N)$ can be trivially bounded as,
    \begin{align}
        S(N)\ll T^\eps \frac{NK}{Q}\times \frac{1}{K}\times \frac{\sqrt{T}}{K}\times \frac{1}{T}\times TK= T^\eps \frac{N\sqrt{T}}{Q}.
    \end{align}
    Thus we are left with saving $\sqrt T/Q$ plus some more in the sum
    \begin{align}
        S'(N)= &\sum_{q\sim Q}\frac{1}{q^{2it}} \\ \nonumber& \sum_{\substack{n,m\asymp T\\ m\equiv n\bmod q}}\
         \frac{\lambda_f(n)\lambda_g(m)}{n^{1/2+it_f}m^{1/2+it_g}} V_3\left(\frac{n-mt_{f,g}}{K}\right)e\left(\frac{t\log(n+m)}{\pi}\right)   ,
    \end{align}
    in order to reach subconvexity. Note that the support condition on $n-mt_{f,g}$ is captured by the weight $V_3$. Now the strategy forward is to break the involution by Cauchy's inequality on one of the variables $n$ or $m$ and use Poisson summation formula. Taking absolute value inside the $q$ and $n$-sums, we apply Cauchy's inequality on $n$ followed by Ramanujan bound on average and obtain
    \begin{align}\label{2.38}
        S'(N)\ll & T^\eps \sum_{q\sim Q}\\ \nonumber & \left(\sum_{n\asymp T}\left|\sum_{\substack{m\asymp T\\ m\equiv n\bmod q}}\frac{\lambda_g(m)}{m^{1/2+it_g}} V_3\left(\frac{n-mt_{f,g}}{K}\right)e\left(\frac{t\log(n+m)}{\pi}\right)\right|^2\right)^{1/2}.
    \end{align}
    Notice that the required saving from the sum inside the square root is now at least $T/Q^2$ and it is trivially bounded  by $T^\eps(K/Q)^2$. Opening up the absolute value square and interchanging summations, the quantity inside the square root equals
    \begin{align}
        &\sum_{\substack{m_1,m_2\asymp T\\ m_1-m_2\ll K\\m_1\equiv m_2\bmod q}}\frac{\lambda_g(m_1){\lambda_g(m_2)}}{m_1^{1/2+it_g}m_2^{1/2-it_g}}\\ \nonumber&\sum_{\substack{n\asymp T\\ n\equiv m_1\bmod q}} V_3\left(\frac{n-m_1t_{f,g}}{K}\right)\overline{V_3\left(\frac{n-m_2t_{f,g}}{K}\right)} e\left(\frac{t}{\pi}(\log(n+m_1)-\log(n+m_2))\right).
    \end{align} 
    By symmetry it is enough to only consider the contribution due to the terms $0\leq m_2-m_1\ll K$
    In the diagonal ($m_1=m_2$) we save $K/Q$, which is enough if $K/Q>T/Q^2$ i.e $$K^{1/2}>T^{1/4}\iff K>T^{1/2}.$$
     In the off diagonal ($m_1\neq m_2$) smoothing out the $n$-sum by an appropriate weight function $\varphi$ and writing $n=rq+m_1$, we apply Poisson summation on $r$. Hence the $n$-sum after Poisson reduces to
    \begin{align}
        \frac{1}{2\pi i} \sum_{r} \int& \varphi\left(\frac{wq+m_1}{T}\right)V_3\left(\frac{wq+m_1-m_1t_{f,g}}{K}\right)\overline{V_3\left(\frac{wq+m_1-m_2t_{f,g}}{K}\right)}\\ \nonumber& e\left(\frac{t}{\pi}(\log(wq+2m_1)-\log(wq+m_1+m_2))-rw\right)\mathop{dw}.
    \end{align}
    With the change of variables $(wq+m_1-m_1t_{f,g})/K\rightsquigarrow w$, the Fourier transform becomes
    \begin{align}\label{2.41}
        \frac{K}{q}&e\left(\frac{-rm_1(t_{f,g}-1)}{q}\right)\\ \nonumber &\int\Phi(w) e\left(\frac{t}{\pi}(\log(Kw+m_1t_{f,g}+m_1))-\log(Kw+m_1t_{f,g}+m_2))-\frac{rKw}{q}\right)\mathop{dw},
    \end{align}
    where 
    \[\Phi(w)= \varphi\left(\frac{Kw+m_1t_{f,g}}{T}\right)V_3\left(w\right)\overline{V_3\left(\frac{Kw-(m_2-m_1)t_{f,g}}{K}\right)}\]
    is non oscillatory.
    The analysis of the exponential integral in \eqref{2.41} forms another key step in our analysis. Basically the idea is to expand the logarithms in the phase by Taylor series expansion and observe that if $K$ is restricted to the interval $(T^{1/2},T^{2/3})$, then the phase function essentially becomes linear and equals
    \begin{align}
        \frac{tK(m_2-m_1)w}{\pi(m_1+m_1t_{f,g})(m_2+m_1t_{f,g})}-\frac{rKw}{q}+\tilde{h}(w),
    \end{align}
    such that $\tilde h^{(j)}(w)\ll_j T^\eps.$ Then by repeated integration by parts, $r$ gets restricted to the dual range 
    \begin{align}\label{2.42}
        \frac{tQ(m_2-m_1)}{\pi (m_1+m_1t_{f,g})(m_2+m_1t_{f,g})} -r\ll \frac{T^\eps Q}{K}.
    \end{align}
     In the zero frequency ($r=0$) we obtain the restriction $m_2-m_1\ll T/K$, where we save $K^2/T$ and it is enough if 
    $$K^2/T>T/{Q^2}\implies K>T^{2}/T^{3/2}=T^{1/2}.$$
    In the non zero frequency ($r\neq 0$), a trivial estimate does not save us anything. Instead, observing that the dual length of $r$  is $0<r\ll KQ/T$ and writing $m_2=m_1+qh$ with $0<h\ll K/Q$, we determine precisely all such $m_1$ which satisfy the condition in \eqref{2.42} for fixed $r$ and $h$. The steps are made precise in \S\S \ref{upto_two_third} Eqn.\eqref{7.18}-\eqref{7.20} and we end up with the admissible range 
    \begin{align}
        m_1-\alpha(r,h,q)\ll \frac{TQ}{Kr}, 
    \end{align}
    where
    \[\alpha(r,h,q)= \frac{-\pi rdh+ \sqrt{\pi^2r^2d^2h^2+4\pi td^2hr}}{2\pi r}.\]
    Altogether, the contribution due to $m_1\neq m_2$ and $r\neq 0$ is bounded by 
    \begin{align}\label{2.45}
        \frac{K}{Q}\sum_{0<r\ll KQ/T}\sum_{0< h\ll K/Q} \sum_{\substack{m\asymp T\\ m-\alpha(h,r,q)\ll \frac{TQ}{Kr}}} \frac{|\lambda_g(m)\lambda_g(m+qh)|}{m^{1/2}(m+qh)^{1/2}}.
    \end{align}
    The endgame is estimation of \eqref{2.45}. It involves going to the $L^4$-norm of the Fourier coefficients by applying A.M-G.M inequality followed by Cauchy's inequality, using Lemma \ref{L^4_norm_Fourier_Coeff}, and a point counting argument to obtain an estimate of same strength as implied by the Ramanujan conjecture. We end up with obtaining that \eqref{2.45} is bounded by $T^\eps K/Q$, saving us $K/Q$ again. The steps are made precise in Lemma \ref{estimate_of_S_1}.
    Therefore if $T^{1/2}<K<T^{2/3}$, we are able to deduce subconvexity. 

     In principle however, the upper bound on the admissible range $K$ will end up being smaller due to contributions from non generic cases which we have skipped in the sketch. The optimal value of $K$ we obtain at the end is $K=T^{25/42}$.
        \section{Application of Voronoi Summation formula}\label{apply_voronoi}
	We now apply the Voronoi Summation formula (Lemma \ref{Voronoi_Maass}) to both the $n$ and $m$-sums getting 
	\begin{align}\label{3.1}
		\sum_{\substack{n\geq1}} \lambda_f(m) e\left(\frac{an}{q}\right)
		&
		n^{-it_f}e\left(\frac{nx}{qQ}\right) V\left(\frac{n}{N}\right)
		\nonumber
		\\
		& = q \sum_{\pm_1}  \sum_{n\geq1} \frac{ \lambda_{f}(n) }{n}
		e\left(\frac{\mp\bar{a} n}{q}\right) G_x^{\pm_1} \left(\frac{n}{q^2} \right),
	\end{align} and 
	\begin{align}\label{3.2}
		\sum_{\substack{n\geq1}} \lambda_g(m) e\left(-\frac{am}{q}\right)
		&
		m^{-it_g}e\left(-\frac{mx}{qQ}\right) V\left(\frac{m}{N}\right)
		\nonumber
		\\
		& = q \sum_{\pm_2}  \sum_{m\geq1} \frac{ \lambda_{f}(m) }{m}
		e\left(\frac{\pm \bar{a} m}{q}\right) G_x^{\pm_2} \left(\frac{m}{q^2} \right),
	\end{align}
	Where $G_x^{\pm_1}$ and $G_x^{\pm_2}$ are the integral transforms as described in Lemma \ref{Voronoi_Maass} with $$g^1(z)= z^{-it_f}e\left(\frac{zx}{qQ}\right)V\left(\frac{z}{N}\right), \;\;  g^2(z)= z^{-it_g}e\left(\frac{-zx}{qQ}\right)U\left(\frac{z}{N}\right).$$ 
	\subsection{Preliminary analysis of the integral transforms}\label{Prem_Int_transform}
    In this section we present a preliminary simplification of the integral transforms $G_x^{\pm_1}$ and $G_x^{\pm_2}$. Let $B=NX/CQ$. The following Lemma gives a crude estimate on the length of the dual $n$ and $m$ sums
	\begin{Lemma}
		$G_x^{\pm_{1,2}}(y)$ is negligibly small i.e. 
		$G_x^{\pm_{1,2}}(y)\ll_A T^{-A}$ unless 
		\begin{align}
			Ny\ll T^\varepsilon(T^2+B^2)
		\end{align}
	\end{Lemma}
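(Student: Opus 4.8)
The plan is to recall the integral representation of $G_x^{\pm_1}(y)$ from \eqref{2.8}, where the test function is $g^1(z)= z^{-it_f}e(zx/qQ)V(z/N)$, and to estimate its Mellin transform $\widetilde{g^1}(-s)$ on a well-chosen vertical line by stationary phase analysis, then move the contour far to the right to exploit rapid decay. First I would compute $\widetilde{g^1}(-s)=\int_0^\infty z^{-s-1-it_f}e(zx/qQ)V(z/N)\,dz$; after the substitution $z=Nu$ this becomes $N^{-s-it_f}\int_0^\infty u^{-s-1-it_f}e(Nux/qQ)V(u)\,du$, an oscillatory integral over $u\asymp 1$ with phase $\tfrac{1}{2\pi}\big(-\Im(s)\log u - t_f\log u + 2\pi Nux/qQ\big)$ once we write $s=\sigma+i\tau$. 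Recalling $Q=\sqrt{N/K}$, one has $N/qQ = NX/(CQ\cdot X/x)\asymp B/X\cdot x$ on the support, so the size of the linear oscillation is $\asymp B$, while the logarithmic terms contribute oscillation of size $\asymp |\tau| + t_f \asymp |\tau| + T$. The standard repeated-integration-by-parts bound (Lemma \ref{repeated_integration_by_parts}) then shows $\widetilde{g^1}(-s)$ is negligible unless $|\tau| \ll T^\varepsilon(T+B)$, i.e. the effective range of the Mellin variable is constrained.

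Next I would insert this into the contour integral \eqref{2.8}. On the line $\Re s=\sigma$, the ratio of Gamma factors $\Gamma(\tfrac{1+s+it_f}{2})\Gamma(\tfrac{1+s-it_f}{2})/\Gamma(\tfrac{-s+it_f}{2})\Gamma(\tfrac{-s-it_f}{2})$ (and its companion) has size $\asymp (|\tau|+T)^{2\sigma+1}$ by Stirling's formula \eqref{2.18} applied to each factor, uniformly for $\tau$ in the relevant range and $|\tau|\gg T^{1+\varepsilon}$ or $|\tau|$ comparable to $T$ — here one splits according to whether $|s\pm it_f|$ is large. Thus the integrand on $\Re s = \sigma$ is bounded by $(\pi^2 y)^{-\sigma}(|\tau|+T)^{2\sigma+1}\cdot|\widetilde{g^1}(-s)|$, and since $\widetilde{g^1}$ decays faster than any polynomial in $\tau$ once $|\tau|\gg T^\varepsilon(T+B)$, the tail is negligible; the remaining integral is over $|\tau|\ll T^\varepsilon(T+B)$, whose measure is $\ll T^\varepsilon(T+B)$. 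Combining, $G_x^{\pm_1}(y)\ll T^\varepsilon (T+B)\cdot (\pi^2 y)^{-\sigma}(T+B)^{2\sigma+1}$. Now push $\sigma\to\infty$: this is negligible unless $y(T+B)^2\ll T^\varepsilon$, equivalently $Ny \ll T^\varepsilon N/(T+B)^2$. Since $(T+B)^2\asymp T^2+B^2$, we recover exactly $Ny\ll T^\varepsilon(T^2+B^2)$ — wait, I should be more careful: the clean way is to observe that the argument of $G_x^{\pm_1}$ is $n/q^2$, so $N y = N n/q^2 \asymp Nn/C^2$, and the condition for non-negligibility is $n/q^2 \ll T^\varepsilon (T+B)^2/N$, i.e. $Ny\ll T^\varepsilon(T+B)^2\ll T^\varepsilon(T^2+B^2)$. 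The argument for $G_x^{\pm_2}(y)$ is identical with $t_f$ replaced by $t_g\asymp T$ and $x$ by $-x$, which changes nothing in the size estimates.

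The main obstacle I anticipate is handling the Gamma-factor ratio \emph{uniformly} across the transition region where $|\tau|$ is comparable to $2t_f$ — there the arguments $\tfrac{-s\pm it_f}{2}$ of the denominator Gammas can have small real part or the imaginary parts $\tfrac{\tau\pm t_f}{2}$ can be small, so the clean Stirling asymptotic \eqref{2.18} degrades. The fix is to not seek an asymptotic but only an upper bound: for $\Re s=\sigma$ fixed and large, $|\Gamma(\tfrac{1+s+it_f}{2})/\Gamma(\tfrac{-s+it_f}{2})|$ is controlled by the reflection formula and the monotonicity of $|\Gamma|$ along verticals, giving a bound $\ll (1+|\tau+t_f|)^{\sigma+1/2}(1+|\tau-t_f|)^{\sigma+1/2}\ll (T+|\tau|)^{2\sigma+1}$ that is valid without case analysis. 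With that crude but uniform bound in hand the contour-shift argument goes through verbatim, and since the lemma only claims a crude (negligibility) estimate rather than an asymptotic, this level of precision suffices; the sharper stationary-phase description needed later is carried out separately in the subsequent subsections.
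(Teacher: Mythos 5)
Your proposal is correct, but it takes a genuinely different route from the paper's proof of this lemma. The paper works with the second (Bessel-kernel) form of \eqref{2.8}: after rescaling $z\rightsquigarrow Nz$, the minus case is killed outright by the exponential decay \eqref{2.17} of $K_{2it_f}$ once $Ny\gg T^\varepsilon t_f^2$, and in the plus case the asymptotic \eqref{2.16} for $J_{\pm 2it_f}$ turns the $z$-integral into an oscillatory integral whose phase derivative is $\asymp (Ny)^{1/2}$; the hypothesis $Ny\gg T^\varepsilon(T^2+B^2)$ is used precisely so that $(Ny)^{1/2}$ dominates both the Bessel transition scale $t_f$ and the linear frequency $Nx/qQ\asymp B$, whence Lemma \ref{repeated_integration_by_parts} gives negligibility. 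You instead stay with the Mellin--Barnes form: localize the Mellin variable, bound the Gamma quotient by $(|\tau|+T)^{2\sigma+1}$ uniformly for fixed $\sigma$, and let $\sigma\to\infty$ to force $Ny\ll T^\varepsilon(T+B)^2\asymp T^\varepsilon(T^2+B^2)$. This is essentially the mechanism the paper deploys later in \S 3.1 for the finer analysis (compare the derivation of \eqref{3.12}), so your route is legitimate and arguably more elementary in that it needs no Bessel asymptotics --- only the uniform polynomial bound on the Gamma quotient across the transition region, which you correctly identify as the delicate point and handle by settling for an upper bound rather than an asymptotic. What your version gives up is sharpness in the intermediate localization: the true constraint from the Mellin transform is $|\tau+t_f|\ll T^\varepsilon(1+B)$ (the logarithmic frequency is $|\tau+t_f|$, not $|\tau|+t_f$), an interval of length $\asymp 1+B$ near $-t_f$ on which one of the two factors $|\tau\pm t_f|$ is $\asymp T$ and the other is $\ll 1+B$; exploiting this is how the paper later sharpens the truncation to $Ny\ll T^{1+\varepsilon}$, $Ny\asymp BT$, or $Ny\ll T^\varepsilon B^2$ in the three regimes. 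For the crude statement of this lemma your weaker range $|\tau|\ll T^\varepsilon(T+B)$ suffices. Finally, note that your displayed condition ``$y(T+B)^2\ll T^\varepsilon$'' omits the factor $N^{-\sigma}$ carried by the Mellin transform of a function supported on $z\asymp N$; your subsequent self-correction to $Ny\ll T^\varepsilon(T+B)^2$ is the right conclusion, and the argument as corrected is sound.
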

	\begin{proof}
		We prove the lemma for $G_x^{\pm_1}(y)$. For $G_x^{\pm_2}$,the proof is exactly the same. From the second expression in \eqref{2.8}, after a change of variable $z\rightsquigarrow Nz$
		\begin{align}
			G_x^{\pm_{1}}(y)&\asymp N^{1-it_f}y\int_{0}^\infty z^{-it_f}e\left(\frac{Nzx}{qQ}\right)V\left(z\right) J_f^{\pm}(4\pi \sqrt{Nyz})\mathop{dz}\\   
		\end{align}
		Suppose that $Ny\gg T^\varepsilon(T^2+B^2)$, then $Ny\gg T^\varepsilon t_f^2,$ and from \eqref{2.17}
		\begin{align}
			K_{2it_f}\left(4\pi\sqrt{Nyz}\right)\cosh(\pi t_f) \ll (Nyz)^{-3}\exp\left(-2\pi\sqrt{Nyz}\right)\ll_A T^{-A},
		\end{align}
		as $z\gg 1$. It follows that $G_x^{-_1}(y)\ll T^{-A}$ in this case. Next by \eqref{2.16} we have 
        \begin{align}
            G_x^{+_1}(y)\asymp y N^{1-it_f}\sum_{\pm}\int_0^\infty &V_1(z)z^{-it_f}e\left(\frac{Nzx}{qQ}\right)\\ \nonumber
            &\exp\left(\pm 2i\left(\left(t_f^2+4\pi^2Nyz\right)\mp t_f\sinh^{-1}(t_f/2\pi\sqrt{Nyz})\right)\right)\mathop{dz}
        \end{align}
        for a new compactly supported smooth function $V_1$ with bounded derivatives. Using the formula
        \[\sinh^{-1}(x)= \log\left(x+\sqrt{x^2+1}\right),\] gives us
        \begin{align}\label{}
            G_x^{+_1}(y)\asymp y N^{1-it_f}\sum_{\pm}\int_0^\infty &V_1(z)e\left( h(z)\right)\mathop{dz},
        \end{align}
        where \[h(z)= \frac{Nzx}{qQ}\pm\frac{1}{\pi}\left(\left(t_f^2+4\pi^2Nyz\right)^{1/2}\mp t_f\log\left(t_f+\left(t_f^2+4\pi^2Nyz\right)^{1/2}\right)\right),\]
        Then $h ^{(j)}(z)\asymp (Ny)^{1/2},$ for all $j\geq 1.$ Then by Lemma \ref{repeated_integration_by_parts} with $R_0=Y_0=\sqrt{Ny}\gg T^{1+\varepsilon},\;\;V_0=Q_0=X_0=1,$ the Lemma follows. 
	\end{proof}
	Next depending on the size of the oscillation $B=NX/CQ,$ we can make further simplification using the first identity in \eqref{2.8}. The approach is similar to \cite{Hu24}.
    \subsubsection{The non oscillatory range} 
         Let $B=NX/CQ\ll T^\varepsilon$. For fixed $\sigma> 0$ large and $s=\sigma+i\tau$, changing variables $\tau\rightsquigarrow \tau-t_f$ \eqref{2.8} implies,
        \begin{align}\label{3.9}
            G_x^{\pm_1}(y)=\frac{\epsilon_f^{(1\mp 1)/2}}{4\pi^2 i}\int_{\mathbb{R}}(\pi^2y)^{-\sigma-i\tau+it_f}\gamma(\sigma+i\tau-it_f)^\mp\tilde{g_1}(-\sigma-i\tau+it_f)d\tau.
        \end{align}
        The Mellin transform equals
        \begin{align}\label{3.10}
            \tilde{g_1}(-\sigma-i\tau+it_f)&= \int V\left(\frac{z}{N}\right)z^{-\sigma-1-i\tau}e\left(\frac{zx}{qQ}\right)\mathop{dz}\\
            &\nonumber= N^{-\sigma}\int V(z)z^{-\sigma-1} e\left(\frac{Nzx}{qQ}-\frac{\tau \log z}{2\pi}\right)\mathop{dz}.
        \end{align}
        Using Lemma \ref{repeated_integration_by_parts} we can again conclude that \eqref{3.10} is negligibly small unless $|\tau|\ll T^{\varepsilon}.$ Now Stiring approximation implies 
        \begin{align}\label{3.11}
            \gamma^{\mp} (\sigma+i\tau-it_f)= \frac{\Gamma(\frac{1+\sigma+i\tau}{2})\Gamma(\frac{1+\sigma+i(\tau-2t_f)}{2})} {\Gamma(\frac{-\sigma-i(\tau-2t_f)}{2})\Gamma(\frac{-\sigma-i\tau}{2})} \mp \frac{\Gamma(\frac{2+\sigma+i\tau}{2})\Gamma(\frac{2+\sigma+i(\tau-2t_f)}{2})} {\Gamma(\frac{1-\sigma-i(\tau-2t_f)}{2})\Gamma(\frac{1-\sigma-i\tau}{2})}\ll_{\sigma,\varepsilon} T^{1/2}(T^{1+\varepsilon})^\sigma. 
        \end{align}
        Therefore \eqref{3.9} and \eqref{3.11} imply
        \begin{align}\label{3.12}
            G_x^{\pm_1}(y)\ll_{\sigma,\varepsilon} T^{1/2}\left(\frac{T^{1+\varepsilon}}{yN}\right)^\sigma.
        \end{align}
        Taking $\sigma$ large enough we see that $G_x^{+_1}(y)$ is negligibly small unless $yN\ll T^{1+\varepsilon}$. Inserting $\sigma=-1/2$ in \eqref{3.12} implies
        \begin{align}\label{3.12}
            G_x^{\pm_1}(y)\ll_{\varepsilon} (yN)^{1/2}T^\varepsilon.
        \end{align}
        Simplification of $G_x^{\pm_2}(y)$ in this case is exactly same and results in the same estimate as above.
        
        Next if $B=NX/CQ\gg T^\varepsilon,$ we fix $\sigma=-1/2$. Writing $s= -1/2 + i\tau$ and applying the change of variables $\tau \rightsquigarrow \tau-t_f$ we have 
        \begin{equation}\label{3.14}
            G_x^{\pm_1}(y)=\frac{\epsilon_f^{(1\mp 1)/2}}{4\pi^2 i}\int_{\mathbb{R}}(\pi^2y)^{1/2-i\tau+it_f}\gamma^{\mp}(-1/2+i\tau-it_f)\tilde{g}(1/2-i\tau+it_f)d\tau,
        \end{equation}
        where
        $$\gamma^{\mp}(-1/2+i\tau-it_f)= \frac{\Gamma\left(\frac{1/2+i\tau}{2}\right)\Gamma\left(\frac{1/2+i(\tau-2t_f)}{2}\right)}{\Gamma\left(\frac{1/2-i\tau}{2}\right)\Gamma\left(\frac{1/2-i(\tau-2t_f)}{2}\right)}\mp \frac{\Gamma\left(\frac{3/2+i\tau}{2}\right)\Gamma\left(\frac{3/2+i(\tau-2t_f)}{2}\right)}{\Gamma\left(\frac{3/2-i\tau}{2}\right)\Gamma\left(\frac{3/2-i(\tau-2t_f)}{2}\right)}.$$
        \subsubsection{The mildly oscillatory range} Let $T^\varepsilon\ll B=NX/CQ\ll T^{1-\varepsilon}$. Then
        \begin{equation}\label{3.15}
            \begin{split}
                \tilde{g_1}\left(\frac{1}{2}-i(\tau-t_f)\right) &= \int V\left(\frac{z}{N}\right)z^{-1/2-i\tau}e\left(\frac{zx}{qQ}\right)dz\\
                &= N^{1/2-i\tau}\int V(z)z^{-1/2}e\left(\frac{Nxz}{qQ}-\frac{\tau\log z}{2\pi}\right).
            \end{split}
        \end{equation}
        Let $$ h(z)=\frac{Nxz}{qQ}-\frac{\tau\log z}{2\pi}. $$ One has 
        $$ h'(z)= \frac{Nx}{qQ}-\frac{\tau}{2\pi z}, \;\;\; h^{(j)}\asymp \tau,$$ for $j\geq 2$. By repeated integration by parts $\tilde{g_1}$ is negligibly small unless $\tau\asymp NX/CQ=B$, in which case, the stationary point of $h$ is $z_0=\tau qQ/2\pi Nx$, when $$h(z_0)=-\frac{\tau}{2\pi}\log\frac{\tau qQ}{eNx}$$ and $h''(z_0)\asymp \tau\asymp B$. By Lemma \ref{stationary_phase}, upto a negligible error term one has 
        $$ \tilde{g_1}\left(\frac{1}{2}-i(\tau-t_f)\right)\asymp N^{1/2-i\tau}B^{-1/2}V_1\left(\frac{\tau}{B}\right)e\left(-\frac{\tau}{2\pi}\log\frac{\tau qQ}{eNx}\right),$$
        for a compactly supported (in $\mathbb{R}_{>0}$), $T^\eps$-inert function $V_1$ (here we have used Mellin's technique to separate variables and obtain $V_1$). Therefore, 
        \begin{align}\label{3.16}
            G_x^{\pm_1}(y)= N^{1/2}B^{-1/2}
            &(\pi^2y)^{1/2+it_f}\frac{\epsilon_f^{(1\mp 1)/2}}{4\pi^2 i}\\
            \nonumber
            &\int V_1\left(\frac{\tau}{B}\right)\gamma^\mp(-1/2+i\tau-it_f)e\left(-\frac{\tau}{2\pi}\log\frac{\pi^2\tau yqQ}{ex}\right)d\tau+ O_A(T^{-A}).
        \end{align}
        Stirling's approximation for the ratio of gamma factors \eqref{2.18} gives
        \begin{equation}\label{3.17}
            \gamma^{\mp}\left(-1/2+i\tau-i t_f\right)= w^{\mp}(\tau)e\left(\frac{1}{2\pi} \left(\tau \log\frac{\tau}{2e}+(\tau-2t_f)\log\frac{2t_f-\tau}{2e}\right)\right)+ O_A(T^{-A}),
        \end{equation}
        with $$\tau^j\frac{\mathop{d}^{j} w^{\mp}(\tau)}{\mathop{d}\tau^{j}}\ll_j 1,$$ 
        for $T^\varepsilon\ll \tau\ll T^{1-\varepsilon}$. Then, at the cost of a negligible error, 
        \begin{align}\label{3.18}
            G_x^{\pm_1}(y)= N^{1/2}&B^{-1/2}
            (\pi^2y)^{1/2+it_f}\frac{\epsilon_f^{(1\mp 1)/2}}{4\pi^2 i}\\
            \nonumber
            &\int V_1^{\pm}\left(\frac{\tau}{B}\right)e\left(\frac{1}{2\pi} \left(\tau \log\frac{x}{2\pi^2yqQ}+(\tau-2t_f)\log\frac{2t_f-\tau}{2e}\right)\right)d\tau,
        \end{align}
       where $V_1^{\pm}(\tau)=V_1(\tau)w^{\mp}(B\tau).$  Note that  $V_1^\pm(\tau)$ is $T^\varepsilon$-inert and compactly supported in $\mathbb{R}_{>0}$. With the change of variables $\tau \rightsquigarrow B\tau$,
        \begin{align}\label{3.19}
            G_x^{\pm_1}(y)= N^{1/2}B^{1/2}&(\pi^2y)^{1/2+it_f} \frac{\epsilon_f^{(1\mp 1)/2}}{4\pi^2 i}\\
            \nonumber
            &\int V^{\pm}_1(\tau)e\left(\frac{1}{2\pi} \left(B\tau \log\frac{x}{2\pi^2yqQ}+(B\tau-2t_f)\log\frac{2t_f-B\tau}{2e}\right)\right)d\tau.
        \end{align}
        Define 
        $$h(\tau)= B\tau \log\frac{x}{2\pi^2yqQ}+(B\tau-2t_f)\log\frac{2t_f-B\tau}{2e}.$$ Then,
        $$h'(\tau)= B\log \frac{xB(2t_f-B\tau)}{4\pi^2XNY}\gg B,$$ unless $Ny\asymp BT$ and
        $$h^{(j)}(\tau)\asymp_j \frac{T}{(T/B)^j},\;\; j\geq 2.$$ With $X_0=V_0=1$, $Q_0=T/B$, $Y_0=T$, $R_0=B$, Lemma $\eqref{repeated_integration_by_parts}$ implies $G^{\pm_1}(y)$ is negligibly small unless $Ny\asymp BT$.
        
        Similar analysis, with an initial change of variable $\tau \rightsquigarrow -\tau$ in $G^{\pm_2}$ yields, for a compactly supported (in $\mathbb{R}_{>0}$) $T^\eps$-inert function $U^{\pm}_2$,
        \begin{align}\label{3.20}
            G_x^{\pm_2}(y)= -N^{1/2}&B^{1/2}(\pi^2y)^{1/2+it_g}\frac{\epsilon_g^{(1\mp 1)/2}}{4\pi^2 i}\\
            \nonumber
            &\int U^{\pm}_2(\tau)e\left(\frac{1}{2\pi} \left(-B\tau \log\frac{x}{2\pi^2yqQ}-(B\tau+2t_g)\log\frac{2t_g+B\tau}{2e}\right)\right)d\tau,
        \end{align}
         if $Ny \asymp  BT$ and negligibly small otherwise. Henceforth, by abuse of notation, we will simply write $V_1(\tau)$ and $U_2(\tau)$ in place $V_1^{\pm}(\tau)$ and $U_2^{\pm}(\tau)$ respectively, as the only property of these functions which is to be mindful of is that they are compactly supported and $T^\eps$-inert.
         \subsubsection{The highly oscillatory case} Finally let
            $$B= \frac{NX}{CQ}\gg T^{1-\varepsilon}.$$ Carrying out the evaluation of the Mellin transform $\tilde{g_1}$ as in \eqref{3.15}  we again obtain the expression
            $$ \tilde{g_1}\left(\frac{1}{2}-i(\tau-t_f)\right)\asymp N^{1/2-i\tau}B^{-1/2}V_1\left(\frac{\tau }{B}\right)e\left(-\frac{\tau}{2\pi}\log\frac{\tau qQ}{eNx}\right).$$ Then with the change of variables $\tau\rightsquigarrow B\tau$ we have 
            \begin{align}\label{3.21}
                G_x^{\pm_1}(y)= N^{1/2}B^{1/2}
                &(\pi^2y)^{1/2+it_f}\\
                \nonumber
                &\int V_1\left(\tau\right)\gamma^{\mp}(-1/2+iB\tau-it_f)e\left(-\frac{\tau}{2\pi}\log\frac{\pi^2B\tau yqQ}{ex}\right)d\tau,
            \end{align}
            with $$\gamma^{\mp}(-1/2+iB\tau-it_f)= \frac{\Gamma\left(\frac{1/2+iB\tau}{2}\right)\Gamma\left(\frac{1/2+i(B\tau-2t_f)}{2}\right)}{\Gamma\left(\frac{1/2-iB\tau}{2}\right)\Gamma\left(\frac{1/2-i(B\tau-2t_f)}{2}\right)}\mp \frac{\Gamma\left(\frac{3/2+iB\tau}{2}\right)\Gamma\left(\frac{3/2+i(B\tau-2t_f)}{2}\right)}{\Gamma\left(\frac{3/2-iB\tau}{2}\right)\Gamma\left(\frac{3/2-i(\tau-2t_f)}{2}\right)}.$$
            Stirling approximation \eqref{2.18} on
            $$\frac{\Gamma\left(\frac{1/2+iB\tau}{2}\right)}{\Gamma\left(\frac{1/2-iB\tau}{2}\right)}\;\; \text{and}\;\; \frac{\Gamma\left(\frac{3/2+iB\tau}{2}\right)}{\Gamma\left(\frac{3/2-iB\tau}{2}\right)}$$implies that at the cost of an error term of size at most $O_A(T^{-A})$,  $G_x^{\pm_1}(y)$ is equal to
            \begin{align}\label{3.23}
                N^{1/2}B^{1/2}&(\pi^2y)^{1/2+it_f}\\ \nonumber
                &\int V_{1}(\tau) \omega^\pm_{1}(\tau)e\left(\frac{1}{2\pi} \left(B\tau \log\frac{x}{2\pi^2yqQ}\right)\right)\mathop{d\tau},
            \end{align}
            where $$\omega^\pm_{1}(\tau)= w_{1/2,A}(B\tau)\frac{\Gamma\left(\frac{1/2+i(B\tau-2t_f)}{2}\right)}{\Gamma\left(\frac{1/2-i(B\tau-2t_f)}{2}\right)}\mp w_{3/2,A}(B\tau)\frac{\Gamma\left(\frac{3/2+i(B\tau-2t_f)}{2}\right)}{\Gamma\left(\frac{3/2-i(B\tau-2t_f)}{2}\right)}\ll 1.$$
             The functions $w_{1/2,A},w_{3/2,A}$ are as in \eqref{2.18}. Similar analysis yields that upto a negligible error term $G_x^{\pm_2}(y)$ equal to,
            \begin{align}
                -N^{1/2}B^{1/2}&(\pi^2y)^{1/2+it_g}\\ \nonumber
                &\int U_2(\tau) \omega^\pm_{2}(\tau)e\left(\frac{1}{2\pi} \left(-B\tau \log\frac{x}{2\pi^2yqQ}\right)\right)\mathop{d\tau},
            \end{align}
            where $$\omega^\pm_2(\tau)\ll 1.$$
             We record the above observations in the following Lemma.
            \begin{Lemma}\label{structure_of_G}
                For $j=1,2$, one has, at the cost of negligible error terms,
                \begin{enumerate}[label=\roman*)]
                    \item If $B\ll T^\eps$, $G_x^{\pm_{j}}(y)$ is negligibly small unless $Ny\ll_\eps T^{1+\eps}$ in which case
                    \begin{align*}
                        G_x^{\pm_j}(y)\ll_\eps (Ny)^{1/2}T^\eps.
                    \end{align*}
                    \item If $T^\varepsilon\ll B\ll T^{1-\varepsilon}$, then $G^{\pm_j}(y)$ is negligibly small unless $Ny\asymp BT$, in which case 
                    \begin{align*}
                        G_x^{\pm_1}(y)= N^{1/2}B^{1/2}&(\pi^2y)^{1/2+it_f} \frac{\epsilon_f^{(1\mp 1)/2}}{4\pi^2 i}\\
                        &\int V_1(\tau)e\left(\frac{1}{2\pi} \left(B\tau \log\frac{x}{2\pi^2yqQ}+(B\tau-2t_f)\log\frac{2t_f-B\tau}{2e}\right)\right)d\tau,
                    \end{align*}
                    and 
                    \begin{align*}
                        G_x^{\pm_2}(y)= -N^{1/2}&B^{1/2}(\pi^2y)^{1/2+it_g}\frac{\epsilon_g^{(1\mp 1)/2}}{4\pi^2 i}\\
                        &\int U_2(\tau)e\left(\frac{1}{2\pi} \left(-B\tau \log\frac{x}{2\pi^2yqQ}-(B\tau+2t_g)\log\frac{2t_g+B\tau}{2e}\right)\right)d\tau,
                    \end{align*}
                    where $V_1(\tau),U_2(\tau)$ are $T^\eps$-inert, compactly supported in $\mathbb{R}_{>0}$, smooth functions.
                    
                    \item If $T^{1-\varepsilon}\ll B$, then $G_x^{\pm_j}(y)$ is negligibly small unless $Ny\ll_\varepsilon T^\varepsilon B^2$, in which case
                    \begin{align*}
                        G_x^{\pm_1}(y)=  N^{1/2}B^{1/2}&(\pi^2y)^{1/2+it_f}\frac{\epsilon_f^{(1\mp 1)/2}}{4\pi^2 i}\\ \nonumber
                         &\int V_{1}(\tau) \omega^\pm_{1}(\tau)e\left(\frac{1}{2\pi} \left(B\tau \log\frac{x}{2\pi^2yqQ}\right)\right)\mathop{d\tau},
                    \end{align*}
                    and 
                    \begin{align*}
                         G_x^{\pm_2}(y)= -N^{1/2}B^{1/2}&(\pi^2y)^{1/2+it_g}\frac{\epsilon_g^{(1\mp 1)/2}}{4\pi^2 i}\\ \nonumber
                        &\int U_2(\tau) \omega^\pm_{2}(\tau)e\left(\frac{1}{2\pi} \left(-B\tau \log\frac{x}{2\pi^2yqQ}\right)\right)\mathop{d\tau},
                    \end{align*}
                     where $$\omega^\pm_{1}(\tau), \omega^\pm_{2}(\tau)\ll 1$$ and  $V_{1}(\tau), U_{2}(\tau)$ $T^\eps$-inert, compactly supported in $\mathbb{R}_{>0}$ and smooth.
                \end{enumerate}
            \end{Lemma}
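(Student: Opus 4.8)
The plan is to establish Lemma \ref{structure_of_G} by carefully tracking the computation already carried out in \S\S \ref{Prem_Int_transform} and simply assembling the three regimes ($B\ll T^\eps$, $T^\eps \ll B \ll T^{1-\eps}$, $T^{1-\eps}\ll B$) into a single statement. The unifying device is the parameter $B = NX/CQ$ measuring the size of the oscillation $e(Nzx/qQ)$ after the dyadic normalisation $z \rightsquigarrow Nz$ in the second integral representation of $G_x^{\pm_1}$ in \eqref{2.8}; the preliminary Lemma already truncates the dual sum to $Ny \ll T^\eps(T^2+B^2)$, so we only have to refine this in each range.

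First I would treat the non-oscillatory range $B\ll T^\eps$. Here one returns to the Mellin–Barnes representation \eqref{2.8}, shifts to $\sigma = -1/2$, substitutes $\tau \rightsquigarrow \tau - t_f$, and observes via Lemma \ref{repeated_integration_by_parts} that the Mellin transform $\tilde g_1$ in \eqref{3.10} is negligible unless $|\tau|\ll T^\eps$. Applying Stirling \eqref{3.11} to the ratio of Gamma factors and taking $\sigma$ large forces $Ny \ll T^{1+\eps}$; evaluating at $\sigma = -1/2$ gives the clean bound $G_x^{\pm_1}(y)\ll (Ny)^{1/2}T^\eps$. The argument for $G_x^{\pm_2}$ is identical up to the harmless sign change $x\rightsquigarrow -x$, which only flips the sign of the phase; this yields part (i).

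Next, for the mildly oscillatory range $T^\eps\ll B\ll T^{1-\eps}$, the stationary phase Proposition \ref{stationary_phase} applied to $h(z) = Nxz/qQ - \tau\log z/(2\pi)$ pins the stationary point at $z_0 = \tau qQ/(2\pi Nx)$ and shows $\tilde g_1$ is negligible unless $\tau \asymp B$, after which the Mellin technique for separating variables produces the $T^\eps$-inert weight $V_1$. Feeding this back into \eqref{3.14}, applying Stirling \eqref{2.18} to the remaining ratio $\gamma^\mp(-1/2+i\tau-it_f)$ in \eqref{3.17}, absorbing $w^\mp(B\tau)$ into $V_1$, and rescaling $\tau\rightsquigarrow B\tau$ gives \eqref{3.19}. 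A final application of Lemma \ref{repeated_integration_by_parts} to the resulting phase $h(\tau)=B\tau\log\frac{x}{2\pi^2yqQ}+(B\tau-2t_f)\log\frac{2t_f-B\tau}{2e}$, whose first derivative is $\gg B$ unless $Ny \asymp BT$, establishes both the support restriction and the stated formula; part (ii) for $G_x^{\pm_2}$ follows from the same steps with $\tau\rightsquigarrow -\tau$, $f\rightsquigarrow g$. The highly oscillatory range $T^{1-\eps}\ll B$ proceeds as in (ii) through the evaluation of $\tilde g_1$, but now Stirling \eqref{2.18} is applied only to the pure ratios $\Gamma(\tfrac{1/2+iB\tau}{2})/\Gamma(\tfrac{1/2-iB\tau}{2})$ and $\Gamma(\tfrac{3/2+iB\tau}{2})/\Gamma(\tfrac{3/2-iB\tau}{2})$, leaving the Gamma factors in $2t_f - B\tau$ untouched and absorbed into $\omega_1^\pm(\tau)\ll 1$; the residual phase is linear, $\tfrac{1}{2\pi}B\tau\log\frac{x}{2\pi^2yqQ}$, and integration by parts forces $Ny \ll T^\eps B^2$. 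Assembling the three cases gives the Lemma.

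The main obstacle — which is really bookkeeping rather than a conceptual difficulty — is verifying that at every stage the weight functions remain $T^\eps$-inert and compactly supported in $\mathbb{R}_{>0}$: each change of variables ($\tau\rightsquigarrow \tau-t_f$, $\tau\rightsquigarrow B\tau$, $\tau\rightsquigarrow -\tau$) and each multiplication by a Stirling factor $w_{\sigma,A}$ must be checked against the inertness definition in \S\S 2.5, and one must confirm the hypotheses \eqref{importantconditions} and \eqref{diffh} of Proposition \ref{stationary_phase} hold in the mildly oscillatory range (in particular that $Y_0 = \tau \asymp B \gg Z^{3\delta}$, which is exactly why the lower cutoff $B\gg T^\eps$ is imposed). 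Since all of this is carried out in detail in the preceding subsections, the proof of the Lemma itself is just the remark that these computations package into the three-part statement, and one may simply cite \eqref{3.9}–\eqref{3.23} accordingly.
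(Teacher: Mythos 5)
Your proposal reproduces, case by case, exactly the computation the paper carries out in \S\S 3.1.1--3.1.3 (Mellin--Barnes plus Stirling for $B\ll T^\eps$, stationary phase on $\tilde g_1$ plus Stirling and integration by parts for $T^\eps\ll B\ll T^{1-\eps}$, and Stirling applied only to the pure $B\tau$ Gamma ratios for $B\gg T^{1-\eps}$), which is precisely how the paper justifies the Lemma --- it is stated there as a record of those observations. The approach and all key steps match; no issues.
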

            \section{The Ramanujan Sum}\label{Ramanujan_sum}
            We now focus on the $a$-sum obtained after the Voronoi Summation Formula, which is a Ramanujan's sum with modulus $q$, 
            \begin{align}\label{4.1}
                c_q(\mp n\pm m)=\sideset{}{^*}\sum_{a\bmod q}e\left(\frac{(\mp n\pm m)\bar{a}}{q}\right).
            \end{align}
            We will use the following identity for the $ c_q(\mp n\pm m)$.
            \begin{align}\label{4.2}
                c_q(\mp n\pm m)= \sum_{d|q}d\mu\left(\frac{q}{d}\right)\mathds{1}_{\mp n\pm m\equiv 0 \bmod d}.
            \end{align}
            Using the above idenity and Lemma \ref{structure_of_G} in   \eqref{2.29} and taking a smooth dyadic partition of the $m,n$-sums we obtain,
            \begin{align}\label{4.3}%correct signs to be included
                S_{+}(N,C,X)\ll_{A,\eps} \sup_{\substack{N'_0\ll N_1\ll N_0\\
                M'_0\ll M_1\ll M_0}}\frac{T^\varepsilon}{Q}&\sum_{q\sim C}q\sum_{d|q}d\mu\left(\frac{q}{d}\right)\\
                 &\nonumber\sum_{\pm} \sum _{n\sim N_1}\frac{\lambda_f(n)}{n}\sum_{\substack{m\sim M_1\\  m\equiv \pm n\bmod d}}\frac{\lambda_g(m)}{m}I(m,n,q)+ T^{-A},
            \end{align} 
            where 
            \begin{align}
                I(m,n,q)=\int W\left(\frac{x}{X}\right)g(q,x)G^{\pm_1}_x\left(\frac{n}{q^2}\right)G^{\pm_2}_x\left(\frac{m}{q^2}\right)
            \end{align}
            and
            \begin{enumerate}
                \item $N_0'=M_0'=1$, $N_0=M_0=T^{1+\varepsilon}C^2/N$, if $B\ll T^\varepsilon$.
                \item $N_0',M_0',N_0,M_0= BTC^2/N$, if $T^\varepsilon\ll B\ll T^{1-\eps}$.
                \item $N_0'=M_0'=1$, $N_0=M_0= T^\eps B^2C^2/N$, if $T^{1-\eps}\ll B$.
            \end{enumerate}
        \section{Simplification of \texorpdfstring{$I(m,n,q)$}{I(m,n,q)}}\label{simplification_of_I}
        In this section we simplify  $I(m,n,q)$ further.
       
       If $B\ll T^\varepsilon$, using the estimate \eqref{3.12} for $G_x^{\pm_{1,2}}(\tau)$ and estimating the $x$-integral trivially we obtain
        \begin{align}
            I(m,n,q)\ll_\eps \frac{T^\eps XN(mn)^{1/2}}{q^2}.
        \end{align}
        If $ T^\varepsilon\ll B\ll T^{1-\eps}$, $I(m,n,q)$ equals
        \begin{align}\label{5.2}
            NB\left(\frac{\pi^2n}{q^2}\right)^{1/2+it_f}\frac{\epsilon_f^{(1\mp 1)/2}}{4\pi^2 i}&\left(\frac{\pi^2m}{q^2}\right)^{1/2+it_g}\frac{\epsilon_g^{(1\mp 1)/2}}{4\pi^2 i}\mathcal{I}(m,n,q),
        \end{align}
        where 
        \begin{align}
            \mathcal{I}(m,n,q)=\int W\left(\frac{x}{X}\right)g(q,x)  \int\int V_1(\tau_1)U_2(\tau_2)e\left(\frac{1}{2\pi}h(\tau_1,\tau_2,x)\right)\mathop{d\tau_1 d\tau_2 dx},
        \end{align}
        with
        \begin{align*}
            h(\tau_1,\tau_2,x)&=  B\tau_1 \log\frac{xq}{2\pi^2nQ}+(B\tau_1-2t_f)\log\frac{2t_f-B\tau_1}{2e}\\
            &-B\tau_2 \log\frac{xq}{2\pi^2mQ}-(B\tau_2+2t_g)\log\frac{2t_g+B\tau_2}{2e}.
        \end{align*}
        Consider the $x$-integral
        \begin{align}\label{5.3}
            I_x= \int W\left(\frac{x}{X}\right)g(q,x)e\left(\frac{B\log x}{2\pi}(\tau_1-\tau_2)\right).
        \end{align}
        We change variable $x\rightsquigarrow Xx$ to get 
        \begin{align}
            I_x=Xe\left(\frac{B\log X}{2\pi}(\tau_1-\tau_2)\right)\int W(x)g(q,Xx) e\left(\frac{B\log x}{2\pi}(\tau_1-\tau_2)\right)\mathop{dx}.
        \end{align}
        Note that either upto a negligible error term (of size $O_A(T^{-A})$) $g(q,x)$ can be replaced by $1$ or $x^j \frac{\partial^j}{\partial x^j}g(q,x)\ll_{j,\eps} Q^{(j+1)\varepsilon}$. Therefore denoting $W_q(x)=W(x)g(q,Xx)$ gives
        \[I_x=Xe\left(\frac{B\log X}{2\pi}(\tau_1-\tau_2)\right)\int W_q(x) e\left(\frac{B\log x}{2\pi}(\tau_1-\tau_2)\right)\mathop{dx}+ O_A(T^{-A}),\] where $\frac{d^j}{dx^j}W_q(x)\ll_{j,\eps} Q^{(j+1)\eps}$. Let (temporarily) $h(x)= B\log x(\tau_1-\tau_2)/2\pi$. Then
        \[h^{(j)}(x)\asymp_j {B(\tau_1-\tau_2)}.\] With $R_0=Y_0=B|\tau_1-\tau_2|$, $X_0= Q_0=1$, $V_0=Q^{-2\eps}$ Lemma \ref{repeated_integration_by_parts} implies that $I_x$ is negligibly small unless
        \begin{align*}
            \tau_1-\tau_2\ll \frac{T^\varepsilon}{B}.
        \end{align*}
        writing $\tau_2=\tau_1+u$,  with $u\ll T^\eps B^{-1}$, 
        \begin{align}\label{5.6}
            \mathcal{I}(m,n,q)= X\int_{u\ll T^\eps B^{-1}}\int W_q(x)e\left(-\frac{Bu\log x}{2\pi}\right) \mathcal{I}_u(m,n,q)\mathop{dx}\mathop{du},
        \end{align}
        \begin{align*}
             \mathcal{I}_u(m,n,q)=  \int V_{u}(\tau_1)e\left(\frac{1}{2\pi}h(\tau_1,\tau_1+u,X)\right)\mathop{d\tau_1},
        \end{align*}
        $V_u(\tau_1)=V_1(\tau_1)U_2(\tau_1+u)$, which is $T^\eps$-inert and compactly supported in $\mathbb{R}_{>0}$ as a function of $\tau_1$. Now,
        \begin{align}
             h(\tau_1+u,\tau_1,X)&= B\tau_1\log\frac{m}{n}+(B\tau_1-2t_f)\log\frac{2t_f-B\tau_1}{2e}\\ \nonumber
             &-(B(\tau_1+u)+2t_g)\log\frac{2t_g+B(\tau_1+u)}{2e}-Bu\log\frac{Xq}{2\pi^2mQ}.
        \end{align}
        We have
        \begin{align*}
            (B(\tau_1+u)+2t_g)\log\frac{2t_g+B(\tau_1+u)}{2e}&= (B\tau_1+2t_g)\log\frac{2t_g+B\tau_1}{2e}+ H_{1,u}(\tau_1)+H_{2,u}(\tau_1),
        \end{align*}
        where $$H_{1,u}(\tau_1)= Bu\log\frac{2t_g+B(\tau_1+u)}{2e}$$ and $$H_{2,u}(\tau_1)= (B\tau_1+2t_g)\log \left(1+\frac{Bu}{B\tau_1+2t_g}\right),$$
       A routine computation using Taylor series expansion implies that 
        \[\frac{\partial^j}{\partial \tau_1^j}H_{1,u}(\tau_1), \frac{\partial^j}{\partial \tau_1^j}H_{2,u}(\tau_1) \ll_{\eps,j}T^{j\eps}.\]
        Defining $V_{2,u}(\tau_1)=V_u(\tau_1)e\left(\frac{1}{2\pi}(H_{1,u}(\tau_1)+H_{2,u}(\tau_1)\right)$ which is again $T^\eps$- inert and compactly supported in $\mathbb{R}_{>0}$,
        \begin{align}\label{5.8}
            \mathcal{I}_u(m,n,q)=e\left(-Bu\log\frac{Xq}{2\pi^2mQ}\right)\int V_{2,u}(\tau_1)e\left(\frac{1}{2\pi}H(\tau_1)\right), 
        \end{align}
        with \[H(\tau_1)=B\tau_1\log\frac{m}{n}+(B\tau_1-2t_f)\log\frac{2t_f-B\tau_1}{2e}-(B\tau_1+2t_g)\log\frac{2t_g+B\tau_1}{2e}.\]
        
        If $B\gg T^{1-\varepsilon}$, we can deduce the same expression for $\mathcal I(m,n,q)$ as in \eqref{5.2} with 
        \begin{align}
            \mathcal{I}_u(m,n,q)=\int V_u(\tau_1)\omega^\pm_1(\tau_1)\omega_2^{\pm}(\tau_1+u)e\left(\frac{1}{2\pi}h(\tau_1,\tau_1+u,X)\right),
        \end{align}
        $V_u(\tau_1)=V(\tau_1)U(\tau_1+u),$ and 
        \begin{align*}
            h(\tau_1,\tau_2,x)= B\tau_1\log \frac{xq}{2\pi^2nQ}-B\tau_2\log \frac{xq}{2\pi^2mQ}.
        \end{align*}
        Similar analysis leading upto \eqref{5.8}, in this case as well, implies
        \begin{align}
            \mathcal{I}_u(m,n,q)=e\left(-Bu\log\frac{Xq}{2\pi^2mQ}\right)\int V_{u}(\tau_1)\omega^\pm_1(\tau_1)\omega_2^{\pm}(\tau_1+u)e\left(\frac{1}{2\pi}H(\tau_1)\right),
        \end{align}
        where $$H(\tau_1)= B\tau_1\log \frac{m}{n},$$
        $V_u(\tau_1)$ is $T^\varepsilon$-inert, compactly supported on $\mathbb{R}_{>0}$ as a function of $\tau$ and $\omega^\pm_1(\tau_1)\omega_2^{\pm}(\tau_1+u)\ll 1.$ We compile the above observations in the following Lemma.
        \begin{Lemma}\label{structure_of_I}
            We obtain, upto negligible error terms of size $O_A(T^{-A})$
            \begin{enumerate}[label=(\arabic*)]
                \item If $B \ll T^\varepsilon$, then
                \begin{align}
                    I(m,n,q)\ll_\eps \frac{T^\eps XN(mn)^{1/2}}{q^2}.
                \end{align} \label{item5.1_1}
                \item If $T^\eps \ll B$ then,
                \begin{align}\label{5.12}
                    I(m,n,q)= NB\left(\frac{\pi^2n}{q^2}\right)^{1/2+it_f}\frac{\epsilon_f^{(1\mp 1)/2}}{4\pi^2 i}&\left(\frac{\pi^2m}{q^2}\right)^{1/2+it_g}\frac{\epsilon_g^{(1\mp 1)/2}}{4\pi^2 i}\mathcal{I}(m,n,q) ,
                \end{align}
                where\label{item5.1_2}
            \begin{enumerate}[label=(\roman*)]
                \item If $B\ll T^{1-\varepsilon}$
                \begin{align}
                    \mathcal{I}(m,n,q)= X\int_{u\ll T^\varepsilon B^{-1}}&\int W_q(x)e\left(-\frac{Bu}{2\pi}\log\frac{xXq}{2\pi^2mQ}\right)\mathcal{I}^\sharp_u(m,n)\mathop{dx}\mathop{du},
                \end{align}
                with
                \[\mathcal{I}^\sharp_u(m,n)=\int V_u(\tau) e\left(\frac{1}{2\pi}H(\tau) \right)\mathop{d\tau}\]
                where $W_q(x), V_u(\tau)\ll_\eps 1$ are compactly supported, smooth, $T^\eps$-inert functions and $$H(\tau)=B\tau\log\frac{m}{n}+(B\tau-2t_f)\log\frac{2t_f-B\tau}{2e}-(B\tau+2t_g)\log\frac{2t_g+B\tau}{2e}.$$\label{item5.1_2_1}
                \item
                If $B\gg T^{1-\varepsilon}$ 
                \begin{align}
                    \mathcal{I}(m,n,q)= X\int_{u\ll T^\varepsilon B^{-1}}&\int W_q(x)e\left(-\frac{Bu}{2\pi}\log\frac{xXq}{2\pi^2mQ}\right)\mathcal{I}_u^\sharp(m,n)\mathop{dx}\mathop{du},
                \end{align}
                with
                \[\mathcal{I}_u^\sharp(m,n)= \int V_u(\tau) \omega_1^\pm(\tau)\omega_2^\pm(\tau+u) e\left(\frac{1}{2\pi}H(\tau) \right)\mathop{d\tau}\]
                where $W_q(x), V_u(\tau)$ are compactly supported and $T^\eps$-inert functions, $\omega_1^\pm(\tau)\omega_2^\pm(\tau+u)\ll 1$ and
                $$H(\tau)=B\tau\log\frac{m}{n}.$$\label{item5.1_2_2}
            \end{enumerate}
            \end{enumerate}
        \end{Lemma}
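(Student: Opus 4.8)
The plan is to feed the three shapes of the Voronoi transforms recorded in Lemma \ref{structure_of_G} into the definition of $I(m,n,q)$ and then to clear out the $x$-integral by repeated integration by parts, pushing the surviving localisation onto the $\tau$-variables. The non-oscillatory range $B\ll T^\eps$ is immediate: part (i) of Lemma \ref{structure_of_G} gives the pointwise bounds $G_x^{\pm_1}(n/q^2)\ll_\eps T^\eps(Nn)^{1/2}/q$ and $G_x^{\pm_2}(m/q^2)\ll_\eps T^\eps(Nm)^{1/2}/q$ on the effective supports, and estimating the $x$-integral trivially over the interval of length $\asymp X$ on which $W(x/X)$ lives (on which $g$ contributes at most $T^\eps$) yields claim \ref{item5.1_1}.

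For $T^\eps\ll B$ I would substitute the factored forms of $G_x^{\pm_1}(n/q^2)$ and $G_x^{\pm_2}(m/q^2)$ coming from parts (ii)/(iii) of Lemma \ref{structure_of_G}. The two scalar prefactors multiply out to $NB(\pi^2 n/q^2)^{1/2+it_f}(\pi^2 m/q^2)^{1/2+it_g}\epsilon_f^{(1\mp1)/2}\epsilon_g^{(1\mp1)/2}/(4\pi^2 i)^2$ (the sign from $G_x^{\pm_2}$ being immaterial), and what remains is exactly the threefold integral $\mathcal{I}(m,n,q)$ over $x,\tau_1,\tau_2$ with phase $h(\tau_1,\tau_2,x)$. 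The structural point is that $x$ enters $h$ only through $B(\tau_1-\tau_2)\log x$. After the rescaling $x\rightsquigarrow Xx$ I would absorb $g(q,Xx)$ into $W_q(x)=W(x)g(q,Xx)$ — legitimate since on the effective range either $g(q,Xx)\equiv 1$ up to $O_A(T^{-A})$ or $x^j\partial_x^j g(q,Xx)\ll Q^{(j+1)\eps}$ — and apply Lemma \ref{repeated_integration_by_parts} to the $x$-integral with $R_0=Y_0=B|\tau_1-\tau_2|$, $X_0=Q_0=1$, $V_0=Q^{-2\eps}$. This forces $\mathcal{I}(m,n,q)$ to be negligible unless $\tau_1-\tau_2\ll T^\eps/B$, and writing $\tau_2=\tau_1+u$ with $|u|\ll T^\eps B^{-1}$ produces the outer $u$- and $x$-integrals displayed in the statement.

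It then remains to identify the inner $\tau_1$-integral $\mathcal{I}_u^\sharp(m,n)$. When $B\ll T^{1-\eps}$ I would insert $\tau_2=\tau_1+u$ into $h(\tau_1,\tau_2,X)$, combine the two $B\tau_1\log((\cdot)q/Q)$ terms into $B\tau_1\log(m/n)$, peel off the $\tau_1$-free phase $-Bu\log\frac{Xq}{2\pi^2 mQ}$, and Taylor-expand the residual $u$-dependent pieces $H_{1,u}(\tau_1)=Bu\log\frac{2t_g+B(\tau_1+u)}{2e}$ and $H_{2,u}(\tau_1)=(B\tau_1+2t_g)\log(1+\tfrac{Bu}{B\tau_1+2t_g})$. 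Since $Bu\ll T^\eps$ and $B/(B\tau_1+2t_g)\asymp B/T\leq 1$, every $\tau_1$-derivative of these is $\ll_j T^{j\eps}$, so $e(\pm\tfrac{1}{2\pi}(H_{1,u}+H_{2,u}))$ merges into a fresh $T^\eps$-inert weight $V_u$ compactly supported in $\mathbb{R}_{>0}$; what survives is precisely $H(\tau)=B\tau\log\frac mn+(B\tau-2t_f)\log\frac{2t_f-B\tau}{2e}-(B\tau+2t_g)\log\frac{2t_g+B\tau}{2e}$, which is \ref{item5.1_2_1}. The range $B\gg T^{1-\eps}$ is handled identically, the only change being that part (iii) of Lemma \ref{structure_of_G} carries the harmless bounded factors $\omega_1^\pm(\tau),\omega_2^\pm(\tau+u)$ and has no $(B\tau\mp2t_{f,g})$-logarithms, so that here $H(\tau)=B\tau\log\frac mn$, which is \ref{item5.1_2_2}.

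I expect the only genuinely delicate point to be the calibration of Lemma \ref{repeated_integration_by_parts} on the $x$-integral: the weight $W_q$ is not $T^\eps$-inert in the strict sense but only obeys $x^j\partial_x^jW_q(x)\ll Q^{(j+1)\eps}$, which is why one must take $V_0=Q^{-2\eps}$ rather than $V_0=1$; this merely inflates the localisation to $\tau_1-\tau_2\ll T^\eps/B$ by a factor $Q^\eps\ll T^\eps$ and is harmless. Everything else — threading the scalar prefactors through the product of the two $G$-transforms and checking the inertness of the $u$-dependent corrections — is routine bookkeeping.
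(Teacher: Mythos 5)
Your proposal is correct and follows essentially the same route as the paper: trivial estimation via Lemma \ref{structure_of_G}(i) for $B\ll T^\eps$, then substitution of the factored Voronoi transforms, the rescaling $x\rightsquigarrow Xx$ with $W_q(x)=W(x)g(q,Xx)$, integration by parts on the $x$-integral with exactly the parameters $R_0=Y_0=B|\tau_1-\tau_2|$, $X_0=Q_0=1$, $V_0=Q^{-2\eps}$ to localise $\tau_1-\tau_2\ll T^\eps/B$, and absorption of the Taylor-expanded corrections $H_{1,u},H_{2,u}$ into a fresh $T^\eps$-inert weight. The identification of the residual phases $H(\tau)$ in the two ranges also matches the paper's computation.
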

       In the next Lemma, using Lemma \ref{stationary_phase}, we determine precise estimates on $\mathcal I_u^\sharp(m,n)$ when $T^\eps\ll B\ll T^{1-\eps}$. When $B\gg T^{1-\eps}$ we  estimate $\mathcal I_u^\sharp(m,n)$ in the $L^2$ sense.
        \begin{Lemma}\label{structure_of_I_sharp}
            Let $t_{f,g}=t_f/t_g\asymp 1$. One has 
            \begin{enumerate}[label=(\arabic*)]
                \item If $T^\eps\ll B\ll T^{1/2+\eps}$, then $\mathcal{I}^\sharp_u(m,n)$ is negligibly small unless \[n-m t_{f,g}\ll N_2= \frac{N_0T^\eps}{B}.\]\label{item5.2_1}
                \item If $T^{1/2+\eps}\ll B\ll T^{1-\eps}$, then $\mathcal I_u^\sharp(m,n)$ is negligibly small unless \[n-mt_{f,g}\asymp N_2= \frac{N_0B}{T},\] in which case 
                \[\mathcal I_u^\sharp(m,n)= \frac{T^{1/2}}{B}V_{\eps,u}\left(\frac{n-m t_{f,g}}{N_2}\right)e\left(\frac{1}{2\pi}H(n,m)\right) +O_A(T^{-A}),\]
                 where $V_{\eps,u}(\tau)$ is a compactly supported  and $T^\eps$- inert as a function of $\tau$ with\\ $V_{\eps,u}(\tau)\ll _\eps 1$ and  \[H(n,m)= 2t\log \frac{e(n+m)}{t}-2t_f\log n-2t_g\log m.\]\label{item5.2_2}
                 \item If $B\gg T^{1-\eps}$, then one has 
                 \begin{align}
                     \int \varphi(w)\left|\mathcal{I}_u^\sharp(m,N_1w)\right|^2\mathop{dw}\ll_\eps\frac{T^\eps}{B},
                 \end{align}
                 for a compactly supported $T^\eps$-inert function $\varphi$.\label{item5.2_3}
            \end{enumerate}
        \end{Lemma}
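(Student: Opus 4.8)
The plan is to prove all three parts by a careful study of the single oscillatory $\tau$-integral in $\mathcal{I}_u^\sharp(m,n)$, the decisive inputs being the first few derivatives of its phase. Throughout, $V_u$ is $T^\eps$-inert and supported on a dyadic $\tau$-interval of length $\asymp 1$, so in the notation of Lemma \ref{repeated_integration_by_parts} and Proposition \ref{stationary_phase} we have $X_0=1$, $V_0=T^{-\eps}$, $\Omega\asymp 1$, while the relevant scales of the phase will turn out to be $Q_0=T/B$, $Y_0=T$ (hence $Z\asymp T$). In the regime $T^\eps\ll B\ll T^{1-\eps}$, with $H$ as in Lemma \ref{structure_of_I}, differentiating gives
\[
H'(\tau)=B\log\frac{m(2t_f-B\tau)}{n(2t_g+B\tau)},\qquad H''(\tau)=-B^2\left(\frac{1}{2t_f-B\tau}+\frac{1}{2t_g+B\tau}\right),
\]
and, since $\tau\asymp 1$ forces $B\tau\ll T^{1-\eps}\ll t_f,t_g$, we get $H''(\tau)\asymp -B^2/T$ and $H^{(j)}(\tau)\ll_j B^j/T^{j-1}$ for $j\ge 2$. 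As $H'$ is strictly monotone, its unique zero is
\[
\tau_0=\frac{2(mt_f-nt_g)}{B(m+n)}=\frac{-2t_g\,(n-mt_{f,g})}{B(m+n)},
\]
so, using $m\asymp n\asymp N_0$ and $t_g\asymp T$, one has $|\tau_0|\asymp T|n-mt_{f,g}|/(BN_0)$; in particular $\tau_0\asymp 1$ precisely when $|n-mt_{f,g}|\asymp BN_0/T$.

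For part (1), $B\ll T^{1/2+\eps}$, so $H'$ varies by $\ll B^2/T\ll T^{2\eps}$ across $\operatorname{supp}V_u$. If $|H'(\tau)|\gg T^{3\eps}$ there, Lemma \ref{repeated_integration_by_parts} (with $R_0=T^{3\eps}$) makes $\mathcal{I}_u^\sharp(m,n)$ negligible; by the monotonicity just noted this can fail only if $|H'|\ll T^{3\eps}$ somewhere, and then $\left|\log\frac{m(2t_f-B\tau)}{n(2t_g+B\tau)}\right|\ll T^{3\eps}/B$ throughout, whence $m(2t_f-B\tau)-n(2t_g+B\tau)\ll T^{3\eps}N_0T/B$, i.e. $|n-mt_{f,g}|\ll N_0T^\eps/B=N_2$ after relabelling $\eps$. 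For part (2), $T^{1/2+\eps}\ll B\ll T^{1-\eps}$, the same device --- now $|H'(\tau)|\asymp|H''|\,|\tau-\tau_0|\gg B^2/T\gg T^{2\eps}$ so long as $\tau_0$ stays a fixed distance from $\operatorname{supp}V_u$ --- shows negligibility unless $\tau_0$ lies well inside $\operatorname{supp}V_u$, i.e. $|n-mt_{f,g}|\asymp BN_0/T=N_2$. On that range the hypotheses of Proposition \ref{stationary_phase} hold for $h=\frac{1}{2\pi}H$ (the bounds \eqref{diffh} follow from the estimates above, the $j=1$ case from $|h'|\ll|h''|\,\Omega\ll B^2/T\ll B$), the binding condition \eqref{importantconditions} reducing to $V_0=T^{-\eps}\gg Q_0Z^{\delta/2}/Y_0^{1/2}\asymp T^{1/2+\delta/2}/B$, i.e. $B\gg T^{1/2+\delta/2+\eps}$; this is exactly why the case split falls at $B\asymp T^{1/2+\eps}$ and why part (1) needs the separate treatment above.

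After a harmless smooth subdivision of $n$ localising $\tau_0$ (on each piece $\tau_0$ is either well inside $\operatorname{supp}V_u$, where Proposition \ref{stationary_phase} applies, or a fixed distance outside, where Lemma \ref{repeated_integration_by_parts} does), the stationary-phase expansion produces the phase $e\left(\frac{1}{2\pi}H(\tau_0)\right)$ with amplitude $\asymp|H''(\tau_0)|^{-1/2}\asymp T^{1/2}/B$ times a $T^\eps$-inert function of $\tau_0$, the $n$-th term of \eqref{eq:statphase} being smaller by $(V_0^2Y_0/Q_0^2)^{-n}+Y_0^{-n/3}\ll T^{-c\eps n}$ via \eqref{eq:pnderiv}. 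To identify the phase, set $A=2t_f-B\tau_0$ and $D=2t_g+B\tau_0$: the stationary relation reads $mA=nD$ while $A+D=2(t_f+t_g)=2t$, so $A=2tn/(m+n)$, $D=2tm/(m+n)$, and substituting into $H(\tau_0)=B\tau_0\log\frac mn-A\log\frac{A}{2e}-D\log\frac{D}{2e}$ the $\log(m+n)$ and $\log(2t)$ contributions cancel, leaving $H(\tau_0)=2t\log\frac{e(n+m)}{t}-2t_f\log n-2t_g\log m=H(n,m)$. Folding the amplitude, the corrections and the localisation weight on $\tau_0$ into a single $T^\eps$-inert $V_{\eps,u}\left((n-mt_{f,g})/N_2\right)$ then gives part (2). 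For part (3), $B\gg T^{1-\eps}$, the phase is only $H(\tau)=B\tau\log(m/n)$ but $V_u$ now carries the bounded oscillatory factors $\omega_1^\pm(\tau)\omega_2^\pm(\tau+u)$; rather than track their oscillation I open the square in $\int\varphi(w)|\mathcal{I}_u^\sharp(m,N_1w)|^2\,dw$ and perform the $w$-integral first, namely $\int\varphi(w)\,e\left(-\frac{B}{2\pi}(\tau_1-\tau_2)\log(N_1w)\right)dw$, whose every $w$-derivative is $\asymp B|\tau_1-\tau_2|$, so by Lemma \ref{repeated_integration_by_parts} it is negligible unless $|\tau_1-\tau_2|\ll T^\eps/B$; on the complementary set the trivial bound $|V_u\omega_1^\pm\omega_2^\pm|\ll 1$ then yields $\int\varphi(w)|\mathcal{I}_u^\sharp(m,N_1w)|^2\,dw\ll\int_{\tau_1\asymp 1}\int_{|\tau_2-\tau_1|\ll T^\eps/B}d\tau_2\,d\tau_1+O_A(T^{-A})\ll T^\eps/B$.

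The main obstacle will be part (2): one has to verify that the hypotheses of Proposition \ref{stationary_phase} genuinely hold throughout $B\gg T^{1/2+\eps}$ (it is this verification that pins the threshold and forces the bare integration-by-parts treatment of part (1)), confirm that the lower-order terms of the expansion are truly lower order (for which $\delta$ and the internal $\eps$ must be chosen appropriately), and --- the computational heart of the matter --- carry out the algebraic collapse of $H(\tau_0)$ to the clean closed form $H(n,m)$. The remaining points (powers of $T^\eps$, the transition near $\partial(\operatorname{supp}V_u)$, uniformity in $u$, and checking that every weight produced is $T^\eps$-inert and compactly supported in $\mathbb{R}_{>0}$) are routine.
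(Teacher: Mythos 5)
Your proposal is correct and follows essentially the same route as the paper: repeated integration by parts on the $\tau$-integral (with the first-derivative threshold at $B^2/T$ explaining the split at $B\asymp T^{1/2+\eps}$) for parts (1) and (2), Proposition \ref{stationary_phase} with $Y_0=T$, $Q_0=T/B$, $V_0=T^{-\eps}$ for the asymptotic in part (2) — including the same stationary point $\tau_0$ and the same algebraic collapse to $H(n,m)$ — and opening the square followed by integration by parts in $w$ to restrict to $|\tau_1-\tau_2|\ll T^\eps/B$ for part (3). Your verification of the stationary-phase hypotheses and the explicit computation of $H(\tau_0)$ are in fact slightly more detailed than what the paper records.
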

        \begin{proof}
         The derivatives of $H$ satisfy 
        \begin{align}
            \frac{d}{d\tau_1}H(\tau) &= B \left(\log \frac{m}{n}+\log(2t_f-B\tau)-\log(2t_g+B\tau)\right)\\ \nonumber
            &= B\left(\log \frac{mt_f}{nt_g}+H_2(\tau)\right),
        \end{align}
        where $$H_2(\tau)= \log \left(1-\frac{B\tau}{2t_f}\right)-\log\left(1+\frac{B\tau}{2t_g}\right)\asymp \frac{B}{T}$$ and
        
        \begin{align}
            \frac{d^2}{d\tau^2}H(\tau)= B\frac{d^2}{d\tau^2}H_2(\tau)\asymp \frac{B^2}{T},\;\; \frac{d^j}{d\tau^j}H(\tau)= B\frac{d^j}{d\tau^2}H_2(\tau)\ll_j\frac{T}{\left(\frac{T}{B}\right)^j}.
        \end{align} 
        When $B\ll T^{1/2+\varepsilon}$, Lemma \ref{repeated_integration_by_parts} with $X_0=1$, $V_0=T^{-\eps}$ $Y_0=T$, $Q_0=T/B$ and $R_0=T^\varepsilon$, implies that $\mathcal{I}_u(m,n,q)$ is negligibly small unless
        \begin{align}\label{5.18} 
        \log\frac{mt_f}{nt_g}&\ll \frac{T^\varepsilon}{B}\\ \nonumber
        \implies n-m\frac{t_f}{t_g}&\ll N_2= \frac{N_0 T^\varepsilon}{B},\;\; \text{from the mean value theorem.}
        \end{align} 
        When $B\gg T^{1/2+\eps}$, Lemma \ref{repeated_integration_by_parts} with $X_0=1$, $V_0=T^{-\eps}$, $Y_0=T$, $Q_0=T/B$ and $R_0= B^2/T$ implies that $\mathcal{I}_u(m,n,q)$ is negligibly small unless 
        \begin{align}\label{5.19}
            \log\frac{mt_f}{nt_g}&\asymp \frac{B}{T}\\ \nonumber
            \implies n-m \frac{t_f}{t_g}&\asymp N_2= \frac{N_0B}{T},\;\; \text{again from the mean value theorem}.
        \end{align} 
        Furthermore
        \begin{align*}
            H'(\tau_0)=0\;\; \text{at}\;\; \tau_0=\frac{2m t_f-2nt_g}{B(m+n)},
        \end{align*} 
        \[H(\tau_0)= 2t\log \frac{e(n+m)}{t}-2t_f\log n-2t_g\log m.\]
        Then Proposition \ref{stationary_phase} with $Y_0=T$, $Q_0= T/B$, $X_0=\Omega=1$,  $V_0=T^{-\eps}$ and $\delta = \eps/100$ followed by Mellin's technique to separate variables in the weight function gives us 
        \begin{align}
            \mathcal{I}_u^\sharp(m,n)= \frac{T^{1/2}}{B}V_{\eps,u}\left(\frac{n-mt_{f,g}}{N_2}\right)e\left(\frac{1}{2\pi}H(n,m)\right) +O_A(T^{-A}),
        \end{align}
        where 
        \[H(n,m)= 2t\log \frac{e(n+m)}{t_f+t_g}-2t_f\log n-2t_g\log m,\]
        $V_{\eps,u}(\tau)\ll_\eps 1$ being compactly supported in $\mathbb{R}\setminus\{0\}$ and $T^\eps$-inert. 
        
        When $B\gg T^{1-\eps}$, $ |\mathcal{I}_u(m,N_1w)|^2$ equals
        \begin{align*}
           \int&\int V_u(\tau_1) \omega_1^\pm(\tau_1)\omega_2^\pm(\tau_1+u)\overline{V_u(\tau_2) \omega_1^\pm(\tau_2)\omega_2^\pm(\tau_2+u)} e\left(\frac{B}{2\pi}\log\frac{m}{N_1w}\left(\tau_1-\tau_2\right)\right)\mathop{d\tau_1}\mathop{d\tau_2}.
        \end{align*}
        Ignoring the $\tau_1,\tau_2$ integrals briefly and isolating the $w$-integral leads to
        \begin{align*}
               \int \varphi(w)e\left(\frac{B}{2\pi}(\tau_1-\tau_2)\log w\right).
        \end{align*}
        Then again Lemma \ref{repeated_integration_by_parts} implies the the $w$-integral is negligibly small unless $\tau_1-\tau_2\ll T^\eps/B$, in which case,
        \begin{align*}
                \int \varphi(w)\left|\mathcal{I}_u^\sharp(m,N_1w)\right|^2\mathop{dw}&\ll_A \int\int _{\tau_1-\tau_2\ll \frac{T^\eps}{B}}(\cdots)\mathop{d\tau_1d\tau_2 dw}+ T^{-A}
                \ll \frac{T^\eps}{B},
        \end{align*}
        completing the proof of the Lemma.
        \end{proof}
       
    \section{Cauchy's inequality on \texorpdfstring{$n$}{n}}\label{Cauchy}
        Next in order to break the involution we eliminate the $GL(2)$ Fourier coefficients involving $f$. We apply Cauchy's inequality on $n$. With this it is better to settle estimating $S_+(N,C,X)$ when $B\ll T^\eps.$ Using \ref{item5.1_1} of Lemma \ref{structure_of_I} on $I(m,n,q)$ we obtain by triangle inequality
        \begin{align}
            S_+(N,C,X)\ll_{A,\eps} \frac{T^\eps NX}{Q}\sup_{\substack{N'_0\ll N_1\ll N_0\\
                M'_0\ll M_1\ll M_0}} &\sum_{q\sim C}\frac{1}{q}\sum_{d|q}d\\ \nonumber
                &\sum_{\pm} \sum_{n\sim N_1} \frac{|\lambda_f(n)|}{n^{1/2}}\sum_{\substack{m\sim M_1\\ m\equiv \pm n\bmod d}}\frac{|\lambda_g(m)|}{m^{1/2}}+T^{-A}.
        \end{align}
       Using Cauchy's inequality on $n$, the $n,m$-sum is bounded by 
      $\Theta^{1/2}\Omega_d^{1/2}$, where
      \begin{align*}
          \Theta=\sum_{n\sim N_1}\frac{|\lambda_f(n)|^2}{n}\ll_\eps T^\eps,
      \end{align*}
      by Ramanujan bound on average \eqref{2.2} and
      \begin{align*}
          \Omega_d = \sum_{n\sim N_1}\left|\sum_{\substack{m\sim M_1\\ m\equiv \pm n \bmod d}}\frac{|\lambda_g(m)|}{m^{1/2}}\right|^2.
      \end{align*}
      Opening up the absolute value square inside the $n$ sum leads to
      \begin{align*}
         \Omega_d&=  \sum_{\substack{m_1,m_2\sim M_1 \\ m_1\equiv m_2 \bmod d}} \frac{|\lambda_g(m_1)|}{m_1^{1/2}}\frac{|\lambda_g(m_2)|}{m_2^{1/2}}\sum_{\substack{n\sim N_1\\n\equiv \pm m_1 \bmod d}}1\\ &\ll \left(\frac{N_1}{d}+1\right)\sum_{\substack{m_1,m_2\sim M_1 \\ m_1\equiv m_2 \bmod d}} \frac{|\lambda_g(m_1)|}{m_1^{1/2}}\frac{|\lambda_g(m_2)|}{m_2^{1/2}}.
      \end{align*}
      Since the sum is symmetric with respect to $m_1$ and $m_2$ it is enough to estimate the contribution due to $m_2 \geq m_1$. Writing $m_2=m_1+rd$ with $0\leq r\ll M_1/d$ the $m_1,m_2$-sum reduces to
      \begin{align*}
          &\sum_{m_1\sim M_1}\frac{|\lambda_g(m_1)|}{m_1^{1/2}}\sum_{0\leq r\ll \frac{M_1}{d}}\frac{|\lambda_g(m_1+rd)|}{(m_1+rd)^{1/2}}\\ 
          &\ll \left(\sum_{0\leq r\ll \frac{M_1}{d}}\sum_{m\sim M_1}\frac{|\lambda_g(m_1)|^2}{m_1}+\sum_{0\leq r\ll \frac{M_1}{d}}\sum_{m\sim M_1}\frac{|\lambda_g(m_1+rd)|^{2}}{m_1+rd}\right)\ll_\eps T^\eps\left(1+\frac{M_1}{d}\right),
      \end{align*}
      by A.M-G.M inequality and Ramanujan bound on average \eqref{2.2} on the $m_1$ sum. Therefore
      \begin{align*}
          \Omega_d \ll_\eps T^\eps \left(\frac{N_1}{d}+1\right)\left(1+\frac{M_1}{d}\right)\ll_\eps T^\eps\left(1+\frac{N_0}{d}\right)^2,
      \end{align*}
      after taking supremum on $N_1$ and $M_1$ and using $N_0=M_0$. Hence 
      \begin{align*}
          S_+(N,C,X)&\ll_{A,\eps} \frac{T^\eps NX}{Q}\sum_{q\sim C}\frac{1}{q}\sum_{d|q}d\left(1+\frac{N_0}{d}\right) +T^{-A}\\ 
          &\ll_{A,\eps} \frac{T^\eps NX}{Q}\left(C+N_0\right)+ T^{-A},
      \end{align*}
      with $NX/Q=BC$, $N_0=T^{1+\eps}C^2/N$, $C\ll Q$, $B\ll T^\eps$ leading to
      \begin{proposition}\label{prop_6.1}
          If $B=NX/CQ\ll T^\eps$, then
          \begin{align}
              S_+(N,C,X)\ll_{\eps} T^\eps \left(\frac{N}{K}+ \frac{TN^{1/2}}{K^{3/2}}\right).
          \end{align}
      \end{proposition}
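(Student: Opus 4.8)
The plan is to treat $B=NX/CQ\ll T^\eps$ as the ``trivial'' non-oscillatory range: here the integral transforms $G_x^{\pm_{1,2}}$ carry no exploitable oscillation, so one simply bounds $I(m,n,q)$ by absolute values and extracts all cancellation from a single application of Cauchy's inequality in the $n$-variable together with the Ramanujan bound on average \eqref{2.2}. Concretely, I would begin from the dyadic decomposition \eqref{4.3} of $S_+(N,C,X)$, insert the pointwise estimate $I(m,n,q)\ll_\eps T^\eps XN(mn)^{1/2}/q^2$ furnished by part \ref{item5.1_1} of Lemma \ref{structure_of_I}, and apply the triangle inequality. This reduces matters, for each admissible pair $N_1\ll N_0$, $M_1\ll M_0$ (with $N_0=M_0=T^{1+\eps}C^2/N$), to bounding
\[
\frac{T^\eps NX}{Q}\sum_{q\sim C}\frac1q\sum_{d\mid q}d\sum_{\pm}\sum_{n\sim N_1}\frac{|\lambda_f(n)|}{n^{1/2}}\sum_{\substack{m\sim M_1\\ m\equiv\pm n\bmod d}}\frac{|\lambda_g(m)|}{m^{1/2}}.
\]

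Next I would apply Cauchy--Schwarz to the $n$-sum, writing the inner double sum as $\Theta^{1/2}\Omega_d^{1/2}$, where $\Theta=\sum_{n\sim N_1}|\lambda_f(n)|^2/n\ll_\eps T^\eps$ by \eqref{2.2}, and $\Omega_d=\sum_{n\sim N_1}\bigl|\sum_{m\sim M_1,\,m\equiv\pm n\bmod d}|\lambda_g(m)|m^{-1/2}\bigr|^2$. Expanding the square in $\Omega_d$, the $n$-summation collapses to counting $n\sim N_1$ in a fixed residue class mod $d$, contributing $O(N_1/d+1)$, and leaving a diagonal-type double sum over $m_1\equiv m_2\bmod d$. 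By symmetry I would restrict to $m_2\ge m_1$, write $m_2=m_1+rd$ with $0\le r\ll M_1/d$, apply $|\lambda_g(m_1)\lambda_g(m_2)|\ll|\lambda_g(m_1)|^2+|\lambda_g(m_2)|^2$, and use \eqref{2.2} once more to get $O_\eps(T^\eps(1+M_1/d))$ for the $m$-sum. Hence $\Omega_d\ll_\eps T^\eps(1+N_1/d)(1+M_1/d)\ll_\eps T^\eps(1+N_0/d)^2$, on taking the supremum over $N_1,M_1$ and using $N_0=M_0$.

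The remaining step is pure bookkeeping. Summing over $d\mid q$ and $q\sim C$ gives $\sum_{q\sim C}q^{-1}\sum_{d\mid q}d(1+N_0/d)\ll_\eps T^\eps(C+N_0)$, hence $S_+(N,C,X)\ll_\eps T^\eps\,(NX/Q)(C+N_0)$. Since $NX/Q=BC\ll T^\eps C$, this is $\ll_\eps T^\eps(C^2+CN_0)$; now $C\ll Q=\sqrt{N/K}$ gives $C^2\ll N/K$, while $N_0=T^{1+\eps}C^2/N\ll T^{1+\eps}Q^2/N=T^{1+\eps}/K$ gives $CN_0\ll Q\cdot T^{1+\eps}/K=T^{1+\eps}N^{1/2}/K^{3/2}$, which is exactly the claimed bound. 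I expect no genuine obstacle in this range — the point of the proposition is precisely that the crude estimate already suffices — so the only care required is in the lattice-point count for the congruence $m\equiv\pm n\bmod d$ and in correctly substituting the relations $Q=\sqrt{N/K}$, $C\ll Q$ and $N_0=T^{1+\eps}C^2/N$ when converting the bound into the stated form.
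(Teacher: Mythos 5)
Your proposal is correct and follows essentially the same route as the paper: the crude bound $I(m,n,q)\ll_\eps T^\eps XN(mn)^{1/2}/q^2$ from Lemma \ref{structure_of_I}\ref{item5.1_1}, Cauchy's inequality in $n$ with $\Theta^{1/2}\Omega_d^{1/2}$, the lattice-point count $O(N_1/d+1)$ together with the shift $m_2=m_1+rd$ and the Ramanujan bound on average to get $\Omega_d\ll_\eps T^\eps(1+N_0/d)^2$, and the same final substitutions $NX/Q=BC$, $C\ll Q=\sqrt{N/K}$, $N_0=T^{1+\eps}C^2/N$. The bookkeeping matches the paper's term for term.
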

      Now if $B\gg T^\eps$ using \eqref{5.12} again by triangle inequality we have
      \begin{align}\label{6.4}
          S_+(N,C,X) \ll_{\eps,A} \frac{T^\eps NB}{Q}\sup_{\substack{N'_0\ll N_1\ll N_0\\
                M'_0\ll M_1\ll M_0}}&\sum_{q\sim C}\frac{1}{q}\sum_{d|q}d\\ \nonumber
                &\sum_{\pm}\sum_{n\sim N_1}\frac{|\lambda_f(n)|}{n^{1/2}}\left|\sum_{\substack{m\sim M_1\\ m\equiv \pm n \bmod d}}\frac{\lambda_g(m)}{m^{1/2-it_g}}\mathcal{I}(m,n,q)\right| +T^{-A}.
      \end{align}
      Writing $q=dl$, taking a dyadic subdivision in the $d$-sum we obtain
      \begin{align}\label{6.5}
          S_+(N,C,X) \ll_{\eps,A} \frac{T^\eps NB}{Q}\sup_{\substack{N'_0\ll N_1\ll N_0\\
                M'_0\ll M_1\ll M_0\\ 1\ll D\ll C}}&\sum_{1\leq l\ll \frac{C}{D}}\frac{1}{l}\sum_{d\sim D}\\ \nonumber
                &\sum_{\pm}\sum_{n\sim N_1}\frac{|\lambda_f(n)|}{n^{1/2}}\left|\sum_{\substack{m\sim M_1\\ m\equiv \pm n \bmod d}}\frac{\lambda_g(m)}{m^{1/2-it_g}}\mathcal{I}(m,n,dl)\right| + T^{-A}.
      \end{align}
      By Cauchy's inequality on the $n$-sum we again obtain that it is bounded by
      \begin{align}\label{6.6}
          \Theta^{1/2}\Omega_{d}^{1/2},
      \end{align}
      where again 
      \begin{align}\label{6.7}
          \Theta=\sum_{n\sim N_1}\frac{|\lambda_f(n)|^2}{n}\ll_\eps T^\eps,
      \end{align}
      by Ramanujan bound on average \eqref{2.2} and 
      \begin{align}\label{6.8}
           \Omega_d = \sum_{n\sim N_1}\left|\sum_{\substack{m\sim M_1\\ m\equiv \pm n \bmod d}}\frac{\lambda_g(m)}{m^{1/2-it_g}} \mathcal{I}(m,n,dl)\right|^2.
      \end{align}
    \section{Poisson on \texorpdfstring{$n$}{n} and Final estimates}\label{Poisson}
    This section is dedicated primarily to the estimation of $\Omega_d$. The principle strategy is to open up the absolute value square, which results in two copies ($m_1$,$m_2$) of the $m$-sum and apply Poisson summation formula to the $n$-sum. The $m_1,m_2$-sum is symmetric and it is enough to estimate the contribution due to $m_2\geq m_1$ .
    \subsection{Oscillation upto \texorpdfstring{$T^{1/2+\eps}$}{T^{1/2}}}\label{Estimate_upto_square_root}
    If $T^\eps\ll B\ll T^{1/2+\eps}$ we estimate $\Omega_d$ using Lemma \ref{structure_of_I_sharp} \ref{item5.2_1}. Using the same in \eqref{6.8}, bounding the $u$ and $x$-integrals trivially, implies
    \begin{align}\label{7.1}
        \Omega_d\ll_{\eps,A}\frac{T^\eps X^2}{B^2}\sum_{n\sim N_1}\left|\sum_{\substack{m\sim M_1\\ m\equiv \pm n\bmod d\\ n-mt_{f,g}\ll N_2}}\frac{\lambda_g(m)}{m^{1/2-it_g}}\right|^2 + T^{-A},
    \end{align} 
    where $N_1,M_1\asymp N_0$, $N_2=N_0T^\eps/B$.
    Opening up the absolute value square, using the symmetry of the $m_1,m_2$-sum and taking the $n$- sum inside, we have
    \begin{align}\label{7.2}
        \Omega_d&\ll_{\eps,A} \frac{T^\eps X^2}{B^2}\sum_{\substack{m_1,m_2\sim M_1\\0\leq m_2-m_1 \\ m_1\equiv m_2\bmod d}}\frac{|\lambda_g(m_1)\lambda_g(m_2)|}{m_1^{1/2}m_2^{1/2}}\sum_{\substack{n-m_1t_{f,g}\ll N_2\\n-m_2t_{f,g}\ll N_2\\ n\equiv \pm m_1\bmod d}}1+ T^{-A}\\ \nonumber
        &\ll_{\eps,A} \frac{T^\eps X^2}{B^2} \sum_{\substack{m_1,m_2\sim M_1\\ 0\leq m_2-m_1\ll N_2\\m_1\equiv m_2\bmod d\\ }}\frac{|\lambda_g(m_1)\lambda_g(m_2)|}{m_1^{1/2}m_2^{1/2}}\sum_{\substack{n-m_1t_{f,g}\ll N_2\\ n\equiv\pm m_1\bmod d}}1+ T^{-A}\\ \nonumber
        &\ll_{\eps,A} \frac{T^\eps X^2}{B^2} \left(1+
        \frac{N_2}{d}\right)\sum_{\substack{m_1,m_2\sim M_1\\ 0\leq m_2-m_1\ll N_2\\m_1\equiv m_2\bmod d}}\frac{|\lambda_g(m_1)\lambda_g(m_2)|}{m_1^{1/2}m_2^{1/2}} +T^{-A},\
    \end{align}
    since $t_{f,g}\asymp 1$. Writing $m_2=m_1+rd$, with $r\ll N_2/d$ and estimating the $m_1,m_2$-sum using A.M-G.M inequality and Ramanujan bound on average \eqref{2.2} implies that the $m_1,m_2$- sum is 
    \begin{align}
        \ll \sum_{0\leq r\ll N_2/d}\sum_{m_1\sim M_1}\frac{|\lambda_g(m_1)|^2}{m_1}+\ \sum_{0\leq r\ll N_2/d}\sum_{m_1\sim M_1} \frac{|\lambda_g(m_1+rd)|^2}{m_1+rd}\ll_\eps T^\eps \left(1+\frac{N_2}{d}\right),
    \end{align}
    since $M_1\asymp N_0$ and $N_2=N_0T^\eps/B\ll N_0$. Therefore 
    \begin{align*}
        \Omega_d\ll_{\eps,A}  \frac{T^\eps X^2}{B}\left(1+\frac{N_2}{d}\right)^2 + T^{-A},
    \end{align*}
    and
    \begin{align*}
        S_+(N,C,X)&\ll_{\eps,A} \frac{T^\eps NX}{Q} \sup_{1\ll D\ll C}\sum_{d\sim D}\left(1+\frac{N_2}{d}\right)+T^{-A}\\
        &\ll_{\eps,A}\frac{T^\eps NX}{Q}\left(C+N_2\right)+ T^{-A}.
    \end{align*}
    With $B\ll T^{1/2+\eps}$, $C\ll Q$, $N_0=BTC^2/N$ and $N_2=N_0T^\eps/B$, we have 
    \begin{proposition}\label{prop_7.1}
        If $T^\eps\ll B=NX/CQ\ll T^{1/2-\eps}$,
        \begin{align}
            S_+(N,C,X)\ll_\eps T^\eps\left(\frac{T^{3/2}N^{1/2}}{K^{3/2}}+\frac{T^{1/2}N}{K}\right).
        \end{align}
    \end{proposition}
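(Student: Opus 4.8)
The plan is to bound $\Omega_d$ from \eqref{6.8} in the range $T^{\eps}\ll B\ll T^{1/2-\eps}$, where essentially all of the saving is already extracted by the oscillatory integral. By Lemma~\ref{structure_of_I_sharp}\,\ref{item5.2_1}, $\mathcal I_u^{\sharp}(m,n)$ is negligibly small unless $n-mt_{f,g}\ll N_2:=N_0T^{\eps}/B$, while the trivial bound $\mathcal I_u^{\sharp}(m,n)\ll 1$ holds on the whole of its support. Inserting this into the expression for $\mathcal I(m,n,dl)$ furnished by Lemma~\ref{structure_of_I}\,\ref{item5.1_2}\ref{item5.1_2_1}, and estimating the $u$-integral (of length $\ll T^{\eps}B^{-1}$) and the $x$-integral (of bounded length with $\int|W_q|\ll T^{\eps}$) trivially, one gets $\mathcal I(m,n,dl)\ll T^{\eps}XB^{-1}$ on the range $n-mt_{f,g}\ll N_2$ and negligibly small otherwise; feeding this into \eqref{6.8} then produces \eqref{7.1}.

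Next I would run the point-counting argument outlined in \S\S\S \ref{sketch}. Open the absolute square in \eqref{7.1} to obtain two copies $m_1,m_2$ of the inner variable and, using symmetry, keep only $0\le m_2-m_1$. Because $t_{f,g}\asymp 1$, the two conditions $n-m_1t_{f,g}\ll N_2$ and $n-m_2t_{f,g}\ll N_2$ force $m_2-m_1\ll N_2$; moving the $n$-sum inside, the count of admissible $n\sim N_1$ with $n\equiv\pm m_1\bmod d$ is $\ll 1+N_2/d$. For the remaining $m_1,m_2$-sum, write $m_2=m_1+rd$ with $0\le r\ll N_2/d$, apply A.M--G.M to $|\lambda_g(m_1)\lambda_g(m_2)|$, and then use the Ramanujan bound on average \eqref{2.2} on the $m_1$-sum; this gives $\ll T^{\eps}(1+N_2/d)$. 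Altogether $\Omega_d\ll T^{\eps}X^2B^{-2}(1+N_2/d)^2$, so $\Omega_d^{1/2}\ll T^{\eps}XB^{-1}(1+N_2/d)$.

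Finally, substituting $\Theta\ll T^{\eps}$ (again \eqref{2.2}) and this bound for $\Omega_d^{1/2}$ into \eqref{6.5}--\eqref{6.6}, summing $1+N_2/d$ over $d\sim D$ to get $\ll D+N_2$, summing $1/l$ over $l\ll C/D$ to get $\ll T^{\eps}$, and taking the supremum over $1\ll D\ll C$, one arrives at $S_+(N,C,X)\ll T^{\eps}NXQ^{-1}(C+N_2)$. The last step is the parameter bookkeeping: since $NX/Q=BC$ and $N_0=BTC^2/N$ one has $N_2=T^{1+\eps}C^2/N$, so the two terms are $BC^2$ and $T^{1+\eps}BC^3/N$; using $B\ll T^{1/2-\eps}$, $C\ll Q$ and $Q^2=N/K$, the former is $\ll T^{1/2}Q^2=T^{1/2}N/K$ and the latter is $\ll T^{3/2}Q^3/N=T^{3/2}N^{1/2}/K^{3/2}$, which is the assertion.

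I do not expect a genuine obstacle in this range: the decisive cancellation has already been harvested in Lemma~\ref{structure_of_I_sharp}\,\ref{item5.2_1}, and what remains is counting lattice points. The two things needing care are that $N_2\ll N_0$, so that the diagonal term $m_1=m_2$ — estimated by \eqref{2.2} after A.M--G.M — dominates and no Poisson summation (as needed for larger $B$) has to be invoked, and the careful handling of the congruences $m\equiv\pm n\bmod d$ and $m_1\equiv m_2\bmod d$, so that no spurious power of $d$ or $N_0$ is left behind when the $d$- and $l$-sums are reassembled.
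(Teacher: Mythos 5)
Your proposal is correct and follows essentially the same route as the paper: Lemma \ref{structure_of_I_sharp}\,\ref{item5.2_1} to localize $n-mt_{f,g}\ll N_2$, trivial bounds on the $u$-, $x$- and $\tau$-integrals giving $\mathcal I(m,n,dl)\ll T^{\eps}XB^{-1}$, opening the square with the diagonal/near-diagonal count $(1+N_2/d)^2$ via A.M--G.M and \eqref{2.2}, and the same final bookkeeping $\frac{NX}{Q}(C+N_2)\ll T^{1/2}N/K+T^{3/2}N^{1/2}/K^{3/2}$. Your intermediate bound $\Omega_d\ll T^{\eps}X^2B^{-2}(1+N_2/d)^2$ is in fact the one consistent with the paper's subsequent step (the displayed $X^2/B$ there appears to be a typo for $X^2/B^2$).
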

    \subsection{Oscillation between \texorpdfstring{$T^{1/2+\eps}$}{T^{1/2}} and \texorpdfstring{$T^{1-\eps}$}{T}}\label{Estimate_beyond_square_root}
    \subsubsection{Oscillation upto \texorpdfstring{$T^{2/3-\eps}$}{T^{2/3}}}\label{upto_two_third} If $T^{1/2+\eps}\ll B\ll T^{2/3-\eps}$,we smooth out the $n$-sum by introducing a smooth compactly supported non negative weight $\varphi$ with support in $[1/2,5/2]$ and $\varphi\equiv 1$ on $[1,2]$. Opening up the absolute value square in \eqref{6.8}, using the symmetry of the $m_1,m_2$-sums and taking the $n$-sum inside 
    \begin{align}
        \Omega_d \ll \sum_{\substack{m_1,m_2\sim M_1\\0\leq m_2-m_1 \\m_1\equiv m_2\bmod d}}\frac{\lambda_g(m_1){\lambda_g(m_2)}}{m_1^{1/2-it_g}m_2^{1/2+it_g}}\sum_{n\equiv \pm m_1\bmod d}\varphi\left(\frac{n}{N_1}\right)\mathcal I(m_1,n,dl)\overline{\mathcal I(m_2,n,dl)}.
    \end{align}
     Lemma \ref{structure_of_I_sharp} \ref{item5.2_2} implies 
     \begin{align}
         \Omega_d\ll_{\eps,A} T^\eps  \sum_{\substack{m_1,m_2\sim M_1\\ 0\leq m_2-m_1\\ m_1\equiv m_2\bmod d}}\frac{\lambda_g(m_1){\lambda_g(m_2)}}{m_1^{1/2-it_g}m_2^{1/2+it_g}}\sum_{\substack{n\equiv \pm m_1\bmod d\\ n-m_1t_{f,g}\asymp N_2\\ n-m_2t_{f,g}\asymp N_2}}\varphi\left(\frac{n}{N_1}\right)\mathcal I(m_1,n,dl)\overline{\mathcal I(m_2,n,dl)}
         + T^{-A},
     \end{align}
     i.e
     \begin{align}\label{7.7}
          \Omega_d\ll_{\eps,A} T^\eps  \sum_{\substack{m_1,m_2\sim M_1\\ m_1\equiv m_2\bmod d\\0\leq m_2-m_1\ll N_2}}\frac{\lambda_g(m_1){\lambda_g(m_2)}}{m_1^{1/2-it_g}m_2^{1/2+it_g}}\sum_{\substack{n\equiv \pm m_1\bmod d\\ n-m_1t_{f,g}\asymp N_2\\ n-m_2t_{f,g}\asymp N_2}}\varphi\left(\frac{n}{N_1}\right)\mathcal I(m_1,n,dl)\overline{\mathcal I(m_2,n,dl)}
         + T^{-A}.
     \end{align}
    From Lemma \ref{structure_of_I} \ref{item5.1_2}
    \begin{align*}
        \mathcal{I}(m_i,n,dl)= X\int_{u_i\ll T^\varepsilon B^{-1}}&\int W(x_i)e\left(-\frac{Bu_i}{2\pi}\log\frac{x_iXdl}{2\pi^2m_1Q}\right)\mathcal{I}_{u_i}^\sharp(m_i,n)\mathop{dx_i}\mathop{du_i},
    \end{align*}
    Substituting the above expression in \eqref{7.7} and taking the $n$-sum inside the $u$, $x$-integrals leads to 
    \begin{align}\label{7.8}
        \Omega_d\ll_{\eps, A} T^\eps X^2&\sum_{\substack{m_1,m_2\sim M_1\\ m_1\equiv m_2\bmod d\\0\leq m_2-m_1\ll N_2}}\frac{\lambda_g(m_1){\lambda_g(m_2)}}{m_1^{1/2-it_g}m_2^{1/2+it_g}}\int\int_{u_1,u_2\ll T^\eps B^{-1}}\int\int_{x_1,x_2}(\cdots)\\ \nonumber
        &\left(\sum_{\substack{n\equiv \pm m_1\bmod d\\ n-m_1t_{f,g}\asymp N_2\\n-m_2t_{f,g}\asymp N_2}} \varphi\left(\frac{n}{N_1}\right)\mathcal{I}_{u_1}^\sharp(m_1,n)\overline{\mathcal{I}_{u_2}^\sharp(m_2,n)}\right)\mathop{du_1du_2dx_1dx_2} + T^{-A}.
    \end{align}
    
    Substituting the expression for $\mathcal{I}_u^\sharp(m,n)$ from Lemma \ref{structure_of_I_sharp} \ref{item5.2_2}, the $n$-sum in \eqref{7.8} reduces to 
    \begin{align}\label{7.9}
        \frac{T}{B^2}\sum_{n\equiv \pm m_1\bmod d}\varphi\left(\frac{n}{N_1}\right)&V_{\eps,u_1}\left(\frac{n-m_1t_{f,g}}{N_2}\right)\overline{V_{\eps,u_2}\left(\frac{n-m_2t_{f,g}}{N_2}\right)}\\ \nonumber 
        &e\left(\frac{1}{2\pi}\left(2t\left(\log(n+m_1)-\log(n+m_2)\right)-2t_g(\log m_1-\log m_2)\right) \right).
    \end{align}
    Note that the support conditions on $n$ is already captured by the weights
    \begin{align*}
        V_{\eps,u_1}\left(\frac{n-m_1t_{f,g}}{N_2}\right)\overline{V_{\eps,u_2}\left(\frac{n-m_2t_{f,g}}{N_2}\right)}.
    \end{align*}
    Let $\Omega_d(m_2=m_1)$ and $\Omega_d(m_2\neq m_1)$ be the contributions of RHS in \eqref{7.8} when $m_2=m_1=m$ and $m_2\neq m_1$ respectively. If $m_1=m_2$ we estimate the $n$-sum in \eqref{7.9} and the $u_i,x_i$-integrals trivially, obtaining
    \begin{align}\label{7.10}
        \Omega_d(m_2=m_1)&\ll_{\eps,A} \frac{T^{1+\eps}X^2}{B^4}\left(1+\frac{N_2}{d}\right)\sum_{m\sim M_1}\frac{|\lambda_g(m)|^2}{m}+T^{-A}\\ \nonumber
        &\ll_{\eps,A} \frac{T^{1+\eps}X^2}{B^4}\left(1+\frac{N_2}{d}\right)+ T^{-A},
    \end{align}
    by Ramanujan bound on average \eqref{2.2} on the $m$-sum.
    
    If $m_2\neq m_1$, writing $n=\pm m_1+rd$, we apply Poisson summation formula on $r$ to obtain that \eqref{7.9} equals
    \begin{align}\label{7.11}
        &\frac{T}{2\pi i B^2}e\left(-2t_g(\log m_1-\log m_2\right)\\ \nonumber
        &\sum_{r\in \mathbb{Z}}\int\varphi\left(\frac{\pm m_1+wd}{N_1}\right)  V_{\eps,u_1}\left(\frac{\pm m_1+wd-m_1t_{f,g}}{N_2}\right)\overline{V_{\eps,u_2}\left(\frac{\pm m_1+wd-m_2t_{f,g}}{N_2}\right)}\\ \nonumber
        &\;\;\;\;\;\;\;\;\;\;\;\;e\left(\frac{t}{\pi}\left(\log(\pm m_1+wd+m_1)-\log(\pm m_1+wd+m_2)\right)-rw\right)\mathop{dw}.
    \end{align}
    Notice that in here the condition $d\ll N_2$ is implicit.
    Let $I_w$ denote the $w$-integral in \eqref{7.11} above. With the standard change of variable 
    \begin{align*}
        \frac{\pm m_1+wd-m_1t_{f,g}}{N_2} \rightsquigarrow w,
    \end{align*}
    implies $I_w$ equals
    \begin{align}\label{}
       \frac{N_2}{d}& e\left(-r\left(\frac{m_1t_{f,g}\mp m_1}{d}\right)+\frac{t}{\pi}\log\left(\frac{m_1+m_1t_{f,g}}{m_2+m_1t_{f,g}}\right)\right)\\ \nonumber
       &\int \Phi(w)
        e\left(\frac{t}{\pi}\left(\log\left(1+\frac{N_2w}{m_1+m_1t_{f,g}}\right)-\log\left(1+\frac{N_2w}{m_2+m_1t_{f,g}}\right)\right)-\frac{rN_2w}{d}\right)\mathop{dw},
    \end{align}
    where
    \begin{align*}
        \Phi(w)= \varphi\left(\frac{N_2w+m_1t_{f,g}}{N_1}\right)V_{\eps,u_1}\left(w\right)\overline{V_{\eps,u_2}\left(\frac{N_2w-(m_2-m_1)t_{f,g}}{N_2}\right)}.
    \end{align*}
    Note that $\Phi$ is compactly supported and $T^\eps$- inert. Let (temporarily)
    \begin{align}\label{7.13}
        h(w)&= \frac{t}{\pi}\left(\log\left(1+\frac{N_2w}{m_1+m_1t_{f,g}}\right)-\log\left(1+\frac{N_2w}{m_2+m_1t_{f,g}}\right)\right)-\frac{rN_2w}{d}\\ \nonumber
        &= \frac{tN_2(m_2-m_1)w}{\pi(m_1+m_1t_{f,g})(m_2+m_1t_{f,g})}-\frac{rN_2w}{d}+\tilde h(w),
    \end{align}
    where 
    \begin{align*}
        \tilde h(w)= \frac{t}{\pi}\sum_{j=1}^\infty \frac{(-1)^j}{j+1}\left(\frac{(N_2w)^{j+1}}{(m_1+m_1t_{f,g})^{j+1}}-\frac{(N_2w)^{j+1}}{(m_2+m_1t_{f,g})^{j+1}}\right),
    \end{align*}
    by Taylor series expansions.
    Notice that 
    \begin{align*}
       \frac{d^j}{dw^j} \tilde h(w)\ll_{j} T^\eps.
    \end{align*}
    Therefore 
    \begin{align*}
        h'(w)= \frac{tN_2(m_2-m_1)}{\pi(m_1+m_2t_{f,g})(m_2+m_1t_{f,g})}- \frac{rN_2}{d}+O(T^\eps)
    \end{align*}
    and $h^{(j)}(w)\ll_j T^\eps$ for $j\geq 2$. Again Lemma \ref{repeated_integration_by_parts} with $X_0=1$, $V_0=T^{-\eps}$, $R_0=T^\eps$, $Y_0=T^\eps$, $Q_0=1$, implies that $I_w$ is negligibly small unless
    \begin{align}\label{7.14}
        \frac{tN_2(m_2-m_1)}{\pi(m_1+m_1t_{f,g})(m_2+m_1t_{f,g})}-\frac{rN_2}{d}\ll T^\eps.
    \end{align}
    Thus, estimating the $u_i,x_i$-integrals trivially we have  
    \begin{align}\label{7.15}
        \Omega_d(m_1\neq m_2)\ll_{\eps,A} \frac{T^{1+\eps} X^2 N_2}{dB^4} \sum_{\substack{m_1,m_2\sim M_1\\ r\in \mathbb{Z}\\ m_2\equiv m_1\bmod d\\ 0<m_2-m_1\ll N_2\\ \frac{td(m_2-m_1)}{\pi(m_1+m_1t_{f,g})(m_2+m_1t_{f,g})}-r\ll \frac{dT^\eps}{N_2}}}\frac{|\lambda_g(m_1)||\lambda_g(m_2)|}{(m_1m_2)^{1/2}}+ T^{-A}.
    \end{align}
    Let $\Omega_d^{r=0}(m_1\neq m_2)$, $\Omega_d^{r\neq 0}(m_1\neq m_2)$ be the contributions from the terms corresponding to $r=0$ and $r\neq 0$ respectively.\\
    
    If $r=0$ then \eqref{7.14} implies $$m_2-m_1\ll \frac{N_0^2T^\eps}{TN_2}\ll \frac{N_0T^\eps}{B}.$$
    Hence,
    \begin{align*}
       \Omega_d^{r=0}(m_1\neq m_2)\ll_{\eps, A} \frac{T^{1+\eps}X^2 N_2}{dB^4}\sum_{\substack{m_1,m_2\sim M_1\\ m_2\equiv m_1\bmod d\\ 0<m_2-m_1\ll \frac{N_0T^\eps}{B}}}\frac{|\lambda_g(m_1)||\lambda_g(m_2)|}{(m_1m_2)^{1/2}}+ T^{-A}.
    \end{align*}
    Estimating the $m_1,m_2$-sums as earlier, using A.M-G.M inequality and Ramanujan bound on average \eqref{2.2} implies 
    \begin{align}\label{7.16}
        \Omega_d^{r=0}(m_1\neq m_2) \ll_{\eps,A} \frac{T^{1+\eps}X^2N_2N_0}{d^2B^5}+T^{-A}.
    \end{align}
    If $r\neq 0$ then \eqref{7.14} and the fact that $m_2-m_1\ll N_2$ implies 
    \[1\leq r\ll \frac{BDT^\eps}{N_1}\]
    Let $m_2=m_1+dh$ with $1\leq h\ll N_2/d$. Then taking a dyadic subdivision over $h$ and $r$, we have
    \begin{align}\label{7.17}
        \Omega_d^{r\neq 0}(m_1\neq m_2)&\ll_{\eps,A} \frac{T^{1+\eps}X^2 N_2}{dB^4}\sup_{\substack{1\ll R\ll 
        \frac{BDT^\eps}{N_1}\\ 1\ll H\ll \frac{N_2}{D}}}\sum_{\substack{r\sim R\\ h\sim H}} \\ \nonumber
        &\sum_{\substack{m_1\asymp N_0 \\ \frac{td^2h}{\pi(m_1+m_1t_{f,g})(dh+m_1+m_1t_{f,g})}-r\ll \frac{DT^\eps}{N_2}}}\frac{|\lambda_g(m_1)\lambda_g(m_1+dh)|}{m_1^{1/2}(m_1+dh)^{1/2}} + T^{-A}.
    \end{align}
    Fixing $h$ and $r$, we precisely determine all such $m_1$ satisfying the condition under the second summation in \eqref{7.17}. Let $y=m_1+m_1t_{f,g}$. Then $y\asymp N_0$ and 
    \begin{align*}
        \frac{td^2h}{\pi y(y+dh)}-r\ll \frac{DT^\eps}{N_2}.
    \end{align*}
    i.e
    \begin{align}\label{7.18}
        &\pi ry(y+dh)- td^2h\ll \frac{N_0^2DT^\eps}{N_2}\\ \nonumber
        \implies &\prod_{\pm}(y-\alpha_\pm(r,h,d))\ll \frac{N_0^2DT^\eps}{RN_2},
    \end{align}
    \begin{align}\label{7.19}
        \alpha_\pm(r,h,d)=\frac{-\pi rdh\pm \sqrt{\pi^2r^2d^2h^2+4\pi td^2hr}}{2\pi r}.
    \end{align}
    Since $\alpha_-(r,h,d)<0$, and $y\asymp N_0$, \eqref{7.18} implies that 
    \begin{align*}
        y-\alpha_+(r,h,d)\ll \frac{N_0DT^\eps}{RN_2},
    \end{align*}
    i.e
    \begin{align}\label{7.20}
        m_1-\frac{\alpha_+ (r,h,d)}{1+t_{f,g}}\ll \frac{N_0DT^\eps}{RN_2}.
    \end{align}
    With this condition, \eqref{7.17} yields
    \begin{align}\label{7.21}
        \Omega_d^{r\neq 0}(m_1\neq m_2)&\ll_{\eps, A}\frac{T^{1+\eps}X^2N_2}{dB^4}\sup_{\substack{1\ll R\ll 
        \frac{BDT^\eps}{N_1}\\ 1\ll H\ll \frac{N_2}{D}}}\sum_{\substack{r\sim R\\ h\sim H}}\\ \nonumber
        &\sum_{\substack{m\asymp N_0\\ m-\frac{\alpha_+(r,h,d)}{1+t_{f,g}}\ll \frac{N_0DT^\eps}{RN_2}}}\frac{|\lambda_g(m)||\lambda_g(m+dh)|}{m^{1/2}(m+dh)^{1/2}} + T^{-A}.
    \end{align}
    Let 
    \begin{align}\label{7.22}
        S_{R,H}= \sum_{\substack{r\sim R\\ h\sim H}}\sum_{\substack{m\asymp N_0\\ m-\frac{\alpha_+(h,r,d)}{1+t_{f,g}}\ll \frac{N_0DT^\eps}{RN_2}}}\frac{|\lambda_g(m)||\lambda_g(m+dh)|}{m^{1/2}(m+dh)^{1/2}}.
    \end{align}
    By A.M-G.M inequality 
    \begin{align}\label{7.23}
        S_{R,H}\ll S_1+S_2.
    \end{align}
    where 
    \begin{align*}
        S_1=\sum_{\substack{r\sim R\\ h\sim H}}\sum_{\substack{m\asymp N_0\\ m-\frac{\alpha_+(h,r,d)}{1+t_{f,g}}\ll \frac{N_0DT^\eps}{RN_2}}} \frac{|\lambda_g(m)|^2}{m}
    \end{align*}
    and 
    \begin{align*}
        S_2=\sum_{\substack{r\sim R\\ h\sim H}}\sum_{\substack{m\asymp N_0\\ m-\frac{\alpha_+(h,r,d)}{1+t_{f,g}}\ll \frac{N_0DT^\eps}{RN_2}}} \frac{|\lambda_g(m+dh)|^2}{m+dh}.
    \end{align*}
     Notice that $dh\ll N_2\ll N_0DT^\eps/RN_2\ll N_0$ since $B\ll T^{2/3-\eps}$, and therefore
     \[m-\frac{\alpha_+(h,r,d)}{1+t_{f,g}}\ll \frac{N_0DT^\eps}{RN_2}\iff m+dh-\frac{\alpha_+(h,r,d)}{1+t_{f,g}}\ll \frac{N_0DT^\eps}{RN_2},\]
     
     \[\implies S_2\ll S_1.\] So it is enough for us to estimate $S_1$, which we carry out in the following Lemma.
     \begin{Lemma}\label{estimate_of_S_1}
         We have 
         \begin{align}
             S_1\ll_\eps T^\eps
         \end{align}
         and thus $S_{R,H}\ll_{\eps} T^\eps$.
     \end{Lemma}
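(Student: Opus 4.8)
The plan is to reduce the estimate to a lattice–point count and feed it into a Cauchy–Schwarz/$L^4$ argument. Since $dh\ll N_2\ll N_0$, the constraint in \eqref{7.22} on $m$ is, up to a shift of size $\ll N_0$, symmetric under $m\mapsto m+dh$, so (as already noted in reducing $S_2$ to $S_1$, and giving $S_{R,H}\ll S_1$) it suffices to bound $S_1$; and since $m,m+dh\asymp N_0$ the weight $m^{-1/2}(m+dh)^{-1/2}$ is $\asymp N_0^{-1}$, so
\begin{align*}
 S_1\asymp\frac{1}{N_0}\sum_{m\asymp N_0}|\lambda_g(m)|^2\,\mathcal N(m),\qquad \mathcal N(m):=\#\Big\{(r,h):\ r\sim R,\ h\sim H,\ \big|m-\tfrac{\alpha_+(r,h,d)}{1+t_{f,g}}\big|\ll L\Big\},
\end{align*}
with $L:=N_0DT^\eps/(RN_2)$. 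By Cauchy's inequality in $m$ and Lemma \ref{L^4_norm_Fourier_Coeff} (note $\sum_{m\asymp N_0}|\lambda_g(m)|^4\ll N_0\sum_{m\asymp N_0}|\lambda_g(m)|^4/m\ll N_0T^\eps$),
\begin{align*}
 S_1\ll\frac{1}{N_0}\Big(\sum_{m\asymp N_0}|\lambda_g(m)|^4\Big)^{1/2}\Big(\sum_{m\asymp N_0}\mathcal N(m)^2\Big)^{1/2}\ll\frac{T^\eps}{N_0^{1/2}}\Big(\sum_{m\asymp N_0}\mathcal N(m)^2\Big)^{1/2},
\end{align*}
so the lemma is reduced to the second–moment bound $\sum_{m\asymp N_0}\mathcal N(m)^2\ll N_0T^\eps$, which carries all the arithmetic content.

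To prove this I expand the square: $\sum_m\mathcal N(m)^2$ counts tuples $(m,r_1,h_1,r_2,h_2)$ with $m\asymp N_0$, $r_i\sim R$, $h_i\sim H$ and $m$ lying in both intervals $I(r_i,h_i)$, each of length $\ll 1+L$ and centred at $\alpha_+(r_i,h_i,d)/(1+t_{f,g})$. Two such intervals meet only when their centres are $\ll L$ apart, and then share $\ll 1+L$ integers, so
\begin{align*}
 \sum_{m\asymp N_0}\mathcal N(m)^2\ll(1+L)\cdot\#\Big\{(r_1,h_1,r_2,h_2):\ r_i\sim R,\ h_i\sim H,\ |\alpha_+(r_1,h_1,d)-\alpha_+(r_2,h_2,d)|\ll L\Big\}.
\end{align*}
Now I use the explicit shape of $\alpha_\pm$ from \eqref{7.19}. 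In the range of \S\S\ref{upto_two_third} one has $rh\asymp RH\ll B^2T^{-1+\eps}\ll t$, so expanding the square root by Taylor gives $\alpha_+(r,h,d)=\tfrac{d}{\sqrt\pi}\sqrt{th/r}-\tfrac{dh}{2}+O\!\big(dh\sqrt{rh/t}\,\big)$; moreover the hypothesis $B\ll T^{2/3-\eps}$ forces $DH\ll L$ for the (unique, up to constants) admissible size of $H$, so both $dh/2$ and the error term are $\ll L$ and may be absorbed: $\alpha_+(r,h,d)=\tfrac{d}{\sqrt\pi}\sqrt{th/r}+O(L)$. Only $(r,h)$ with $\alpha_+(r,h,d)\asymp N_0$ contribute (otherwise $I(r,h)$ misses $\{m\asymp N_0\}$; if no such $(r,h)$ exists then $S_{R,H}=0$ and we are done), so the overlap condition collapses to $\big|\sqrt{h_1/r_1}-\sqrt{h_2/r_2}\big|\ll L/(D\sqrt T)$, i.e.\ $|h_1r_2-h_2r_1|\ll LN_0R^2/(D^2T)\ll T^\eps$, the last step using $R\ll BDT^\eps/N_1$ and $N_2=N_0B/T$.

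It then remains to count tuples with $|h_1r_2-h_2r_1|\ll T^\eps$: fixing $(r_1,h_1)$ there are $\ll T^\eps$ admissible values of the ratio $h_2/r_2$, and for each the number of $(r_2,h_2)\in(R,2R]\times(H,2H]$ realising it is $\ll 1+R\gcd(r_1,h_1)/r_1$; summing the latter over $(r_1,h_1)$ and using $\sum_{r\sim R}\tau(r)/r\ll T^\eps$ yields the clean bound $\#\{(r_1,h_1,r_2,h_2):|h_1r_2-h_2r_1|\ll T^\eps\}\ll RHT^\eps$. Hence $\sum_m\mathcal N(m)^2\ll(1+L)RH\,T^\eps$, and since $RHL=HN_0DT^\eps/N_2\ll N_0T^\eps$ (by $H\ll N_2/D$) and $RH\ll B^2T^{-1+\eps}\ll N_0T^\eps$ (the latter because $S_{R,H}=0$ unless $N_0\ll BDT^\eps$, whence $B^2/T\ll N_0$ in the present ranges), we get $(1+L)RH\ll N_0T^\eps$ and therefore $\sum_m\mathcal N(m)^2\ll N_0T^\eps$. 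Plugging this back gives $S_1\ll T^\eps$, and by the reduction above $S_{R,H}\ll T^\eps$.

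I expect the main obstacle to be the overlap analysis of the third paragraph: one must keep track of the second–order term $-dh/2$ in $\alpha_+$ alongside the leading $\tfrac{d}{\sqrt\pi}\sqrt{th/r}$ and verify—crucially using $B\ll T^{2/3-\eps}$—that it is dominated by the interval length $L$, without which the overlap condition would not collapse to the divisibility–type inequality $|h_1r_2-h_2r_1|\ll T^\eps$ and the divisor bound could not be applied. The remaining step—checking that the product of $1+L$, the number of admissible pairs, and the divisor factor is genuinely $\ll N_0T^\eps$—is routine but requires a careful juggling of the constraints $R\ll BDT^\eps/N_1$, $H\ll N_2/D$, $N_2=N_0B/T$ and $N_0\ll BDT^\eps$; everything else is the standard Cauchy–Schwarz/$L^4$ packaging.
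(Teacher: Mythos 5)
Your proof is correct and follows essentially the same route as the paper: Cauchy--Schwarz against the $L^4$ bound of Lemma \ref{L^4_norm_Fourier_Coeff}, a second-moment count of the multiplicity function, Taylor expansion of $\alpha_+$ (using $B\ll T^{2/3-\eps}$ to absorb the lower-order terms into the interval length) to collapse the spacing condition to $|h_1r_2-h_2r_1|\ll T^\eps$, and a divisor-type count giving $\ll RHT^{\eps}$ admissible quadruples. The only substantive difference is that you track the ``$+1$'' in the interval count explicitly (the paper elides it, since in fact $L\gg 1$ here); the bound $RH\ll N_0T^\eps$ you need for that term does hold, though the cleaner justification is not ``$N_0\ll BDT^\eps$'' but rather $1\leq D\ll N_2=N_0B/T$ combined with $B\ll T^{2/3-\eps}$, which forces $N_0\gg T/B\gg B^2/T\gg RHT^{1-\eps}\cdot T^{-1}$.
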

     \begin{proof}
         We first define the weights $\omega(m)$ for each $m\asymp N_0$ as follows,
         \begin{align}
             \omega(m)= \sum_{\substack{ r\sim R\\ h\sim H\\
             \frac{\alpha_+(h,r,d)}{1+t_{f,g}}-m\ll \frac{N_0DT^\eps}{RN_2}}}1,
         \end{align}
         so that
         \begin{align}
             S_1= \sum_{m\asymp N_0}\frac{|\lambda_g(m)|^2}{m}\omega(m).
         \end{align}
         Cauchy's inequality followed by Lemma \ref{L^4_norm_Fourier_Coeff} combined with partial summation implies,
         \begin{align}\label{7.27}
             S_1\ll_\eps \frac{T^\eps}{N_0^{1/2}}\left(\sum_{m\asymp N_0}\omega(m)^2\right)^{1/2}.
         \end{align}
        Now, 
        \begin{align}
         \sum_{m\asymp N_0}\omega(m)^2&=\sum_{m\asymp N_0}\left(\sum_{\substack{ r\sim R\\ h\sim H\\
             \frac{\alpha_+(h,r,d)}{1+t_{f,g}}-m\ll \frac{N_0DT^\eps}{RN_2}}}1\right)^2\\ \nonumber
             &= \sum_{m\asymp N_0}\sum_{\substack{ r_1\sim R\\ h_1\sim H\\
             \frac{\alpha_+(h_1,r_1,d)}{1+t_{f,g}}-m\ll \frac{N_0DT^\eps}{RN_2}}}\sum_{\substack{ r_2\sim R\\ h_2\sim H\\
             \frac{\alpha_+(h_2,r_2,d)}{1+t_{f,g}}-m\ll \frac{N_0DT^\eps}{RN_2}}}1 \\ \nonumber
             &= \sum_{\substack{r_1,r_2\sim R\\ h_1,h_2\sim H\\
             \alpha_+(h_1,r_1,d)-\alpha_+(h_2,r_2,d)\ll\frac{N_0DT^\eps}{RN_2}}}\sum_{\frac{\alpha_+(h_1,r_1,d)}{1+t_{f,g}} -m\ll \frac{N_0DT^\eps}{RN_2}}1\\ \nonumber
             &\ll \frac{N_0DT^\eps}{RN_2}\sum_{\substack{r_1,r_2\sim R\\ h_1,h_2\sim H\\
             \alpha_+(h_1,r_1,d)-\alpha_+(h_2,r_2,d)\ll\frac{N_0DT^\eps}{RN_2}}}1.
        \end{align}
        By a routine argument involving the Taylor series expansion of $\alpha_+(h,r,d)$,
        \begin{align}
          \alpha_+(h_1,r_1,d)-\alpha_+(h_2,r_2,d)\asymp dt^{1/2}\left(\frac{h_1^{1/2}}{r_1^{1/2}}-\frac{h_2^{1/2}}{r_2^{1/2}}\right)+O(N_2),
      \end{align}
      Therefore 
      \begin{align}
          \alpha_+(h_1,r_1,d)-\alpha_+(h_2,r_2,d)\ll \frac{N_0DT^\eps}{RN_2}\iff \frac{h_1^{1/2}}{r_1^{1/2}}-\frac{h_2^{1/2}}{r^{1/2}}\ll \frac{N_0T^\eps}{RN_2T^{1/2}}.
      \end{align}
      i.e 
      \begin{align}\label{7.31}
          h_1r_2-h_2r_1\ll \frac{T^\eps N_0H^{1/2}R^{1/2}}{N_2T^{1/2}}\ll T^\eps.
      \end{align}
      With $h_1r_2=u_1,\; h_2r_1=u_2$, The number of possible solutions to the inequality \eqref{7.31} is bounded by
      \begin{align*}
          \sum_{\substack{u_2\sim HR\\ u_1-u_2\ll T^\eps}} d(u_1)\ll T^\eps HR ,
      \end{align*}
      $d(.)$ being the divisor function. Hence 
      \begin{align*}
          \sum_{m\asymp N_0}\omega(m)^2\ll \frac{N_0DHRT^\eps}{RN_2}\ll N_0T^{\eps},
      \end{align*}
      since $DH\ll N_2$. Therefore the Lemma follows from \eqref{7.27}. 
      \end{proof}
      Using the above Lemma we obtain
      \begin{align}\label{7.32}
          \Omega_d^{r\neq 0}(m_1\neq m_2)\ll_{\eps,A} \frac{T^{1+\eps}X^2N_2}{dB^4}+T^{-A}. 
      \end{align} 
     Combining the bounds in \eqref{7.10}, \eqref{7.16}, \eqref{7.32} we obtain,
      \begin{align}
          \Omega_d &\ll_{\eps,A} \frac{T^{1+\eps}X^2}{B^4}\left(1+\frac{N_2}{d}+\frac{N_2N_0}{d^2B}\right),
      \end{align}
      and thus
      \begin{align}
          S_+(N,C,X)\ll_{\eps,A} \frac{T^{1/2+\eps} NX}{BQ}\sup_{1\ll D\ll C}\sum_{d\sim D}1+\frac{N_2^{1/2}}{d^{1/2}}+\frac{N_2^{1/2}N_0^{1/2}}{dB^{1/2}}+T^{-A}\\ \nonumber
          \ll_{\eps,A}\frac{T^{1/2+\eps} NX}{BQ}\left(C+N_2^{1/2}C^{1/2}+\frac{N_2^{1/2}N_0^{1/2}}{B^{1/2}}\right) + T^{-A}.
      \end{align}
      With $X\ll T^\eps$, $C\ll Q$, $N_0=BTC^2/N$, $N_2=N_0B/T$, we have
      \begin{proposition}\label{prop_7.3}
          If $T^{1/2+\eps}\ll B=NX/CQ\ll T^{2/3-\eps}$,
          \begin{align}
              S_+(N,C,X)\ll_\eps T^\eps\left(\frac{T^{1/2}N}{K}+\frac{T^{1/2}N^{3/4}}{K^{1/4}}+\frac{TN^{1/2}}{K^{1/2}}\right).
           \end{align}
      \end{proposition}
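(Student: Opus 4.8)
The plan is to reduce the whole estimate to a bound for the quantity $\Omega_d$ from \eqref{6.8}. Starting from \eqref{6.5}, Cauchy's inequality applied to the $n$-sum bounds the inner triple sum by $\Theta^{1/2}\Omega_d^{1/2}$, and since $\Theta=\sum_{n\sim N_1}|\lambda_f(n)|^2/n\ll_\eps T^\eps$ by the Ramanujan bound on average \eqref{2.2}, everything comes down to estimating $\Omega_d$ for $d\sim D$, $1\ll D\ll C$, and then summing trivially over $d$ and over $l\ll C/D$. To handle $\Omega_d$ I would open the absolute-value square, producing two copies $m_1,m_2$ of the $m$-sum, use the symmetry to keep only $m_2\geq m_1$, and pull the $n$-sum inside. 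Since $T^{1/2+\eps}\ll B\ll T^{2/3-\eps}$ lies inside the range $(T^{1/2+\eps},T^{1-\eps})$, Lemma \ref{structure_of_I_sharp} \ref{item5.2_2} applies: it forces $n-m_i t_{f,g}\asymp N_2=N_0B/T$ and replaces each $\mathcal I_{u_i}^\sharp(m_i,n)$ by $(T^{1/2}/B)\,V_{\eps,u_i}(\,\cdot\,)\,e(\tfrac1{2\pi}H(n,m_i))$. Substituting this, together with the $u_i,x_i$-integral representation of $\mathcal I(m_i,n,dl)$ from Lemma \ref{structure_of_I}, and interchanging the $n$-sum with all the outer integrals, I would split $\Omega_d$ into a diagonal part $m_1=m_2$ and an off-diagonal part $m_1\neq m_2$.

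The diagonal part is straightforward: the inner $n$-sum has length $\ll 1+N_2/d$, so estimating the $u_i,x_i$-integrals trivially and the $m$-sum by \eqref{2.2} gives $\Omega_d(m_2=m_1)\ll T^{1+\eps}X^2B^{-4}(1+N_2/d)$. For the off-diagonal part I would smooth the $n$-sum by a bump function, write $n=\pm m_1+rd$, and apply Poisson summation in $r$. The resulting $w$-integral carries a phase built from $\log(n+m_1)-\log(n+m_2)$; after the change of variables that normalizes the $V_\eps$-weight, Taylor-expanding the two logarithms shows this phase equals the linear term $\bigl(\tfrac{tN_2(m_2-m_1)}{\pi(m_1+m_1t_{f,g})(m_2+m_1t_{f,g})}-\tfrac{rN_2}{d}\bigr)w$ plus a tail $\tilde h(w)$ with $\tilde h^{(j)}(w)\ll_j T^\eps$ — and it is precisely here that the hypothesis $B\ll T^{2/3-\eps}$ is needed to control the tail. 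Repeated integration by parts (Lemma \ref{repeated_integration_by_parts}) then makes the integral negligible unless $r$ lies in the dual window $\tfrac{td(m_2-m_1)}{\pi(m_1+m_1t_{f,g})(m_2+m_1t_{f,g})}-r\ll dT^\eps/N_2$, which is \eqref{7.15}.

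Finally I would split the off-diagonal sum according to $r=0$ and $r\neq0$. When $r=0$ the window forces $m_2-m_1\ll N_0T^\eps/B$, and AM--GM together with \eqref{2.2} yields the bound \eqref{7.16}. When $r\neq0$ the dual length is $r\ll BDT^\eps/N_1$; setting $m_2=m_1+dh$ with $h\ll N_2/d$ and $y=m_1(1+t_{f,g})$, the window condition becomes the quadratic inequality $\pi ry(y+dh)-td^2h\ll N_0^2DT^\eps/N_2$, whose only relevant root is $\alpha_+(r,h,d)$ of \eqref{7.19}, so $m_1$ is confined to an interval of length $\ll N_0DT^\eps/(RN_2)$. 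The endgame is to bound the resulting sum $S_{R,H}$ of \eqref{7.22} by $T^\eps$, which I would do as in Lemma \ref{estimate_of_S_1}: AM--GM reduces to $S_1$, then Cauchy's inequality, Lemma \ref{L^4_norm_Fourier_Coeff} and partial summation reduce matters to counting quadruples $(r_1,h_1,r_2,h_2)$ with $\alpha_+(h_1,r_1,d)-\alpha_+(h_2,r_2,d)\ll N_0DT^\eps/(RN_2)$; a Taylor expansion of $\alpha_+$ turns this into $h_1r_2-h_2r_1\ll T^\eps$, i.e. essentially the equation $h_1r_2=h_2r_1$, counted by the divisor bound. This gives $\Omega_d^{r\neq0}(m_1\neq m_2)\ll T^{1+\eps}X^2N_2/(dB^4)$. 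Collecting the three contributions yields $\Omega_d\ll T^{1+\eps}X^2B^{-4}\bigl(1+N_2/d+N_2N_0/(d^2B)\bigr)$; feeding this back into \eqref{6.5}--\eqref{6.6}, summing over $d\sim D$ and $l\ll C/D$, and inserting $X\ll T^\eps$, $C\ll Q=\sqrt{N/K}$, $N_0=BTC^2/N$, $N_2=N_0B/T$ produces the three stated terms. I expect the main obstacle to be the post-Poisson $w$-integral together with the point count in Lemma \ref{estimate_of_S_1}: one must be certain that the Taylor tail $\tilde h$ is genuinely of lower order (this is exactly what caps $B$ at $T^{2/3-\eps}$) and that the Archimedean condition on $(r_i,h_i)$ collapses to a divisor-type count rather than a cruder interval count, since any loss there would destroy the subconvex saving.
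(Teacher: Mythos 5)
Your proposal follows the paper's own argument essentially step for step: Cauchy to reduce to $\Omega_d$, stationary-phase evaluation of $\mathcal I_u^\sharp$ via Lemma \ref{structure_of_I_sharp}\,\ref{item5.2_2}, the diagonal/off-diagonal split, Poisson in $n$ with the Taylor-linearized phase (where $B\ll T^{2/3-\eps}$ is indeed what tames the tail $\tilde h$), the zero/nonzero frequency dichotomy, and the divisor-count estimate of $S_{R,H}$ via the $L^4$ bound of Lemma \ref{L^4_norm_Fourier_Coeff}, ending with the same bound $\Omega_d\ll T^{1+\eps}X^2B^{-4}(1+N_2/d+N_2N_0/(d^2B))$. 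The approach and all intermediate bounds agree with the paper, so the proposal is correct.
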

    \subsubsection{Oscillation beyond \texorpdfstring{$T^{2/3-\eps}$}{T^{2/3}}}\label{beyond_two_third} If $T^{2/3-\eps}\ll B\ll T^{1-\eps}$ we again estimate $\Omega_d$ trivially as in \eqref{7.1}. So that, by Lemma \ref{5.2} \ref{item5.2_2}
    \begin{align}
         \Omega_d\ll_{\eps,A}\frac{T^{1+\eps} X^2}{B^4}\sum_{n\sim N_1}\left|\sum_{\substack{m\sim M_1\\ m\equiv \pm n\bmod d\\ n-mt_{f,g}\ll N_2}}\frac{\lambda_g(m)}{m^{1/2-it_g}}\right|^2 + T^{-A},
    \end{align}
    where $N_1,M_1\asymp N_0$ and $N_2=N_0B^{1+\eps}/T$. Opening up the absolute value square and estimating the $m_1,m_2$-sum exactly as in \eqref{7.2} we get 
    \begin{align}
        \Omega_d\ll_{\eps,A} \frac{T^{1+\eps}X^2}{B^4}\left(1+\frac{N_2}{d}\right)^2.
    \end{align}
    Thus
    \begin{align}
        S_+(N,C,X)&\ll_{\eps,A} \frac{T^\eps NX}{Q}\frac{T^{1/2}}{B}\sup_{1\ll D\ll C}\sum_{d\sim D}\left(1+\frac{N_2}{d}\right)+ T^{-A}\\ \nonumber
        &\ll_{\eps,A} \frac{T^{1/2+\eps} NX}{QB}\left(C+N_2\right)+ T^{-A}.
    \end{align}
    With $X\ll T^\eps$, $B\gg T^{2/3-\eps}$, $C=NX/BQ$, $N_2=N_0B/T$, $N_0= BTC^2/N$, we obtain
    \begin{proposition}\label{prop_7.4}
        If $T^{2/3+\eps}\ll B=NX/CQ \ll T^{1-\eps}$ then,
        \begin{align}
            S_+(N,C,X)\ll_\eps T^\eps\left(\frac{NK}{T^{5/6}}+ \frac{N^{1/2}K^{3/2}}{T^{1/6}}\right).
        \end{align}
    \end{proposition}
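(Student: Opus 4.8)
The plan is to handle this large–oscillation range by a purely trivial bound on $\Omega_d$, rather than the Poisson/stationary–phase argument of \S\S\ref{upto_two_third}: once $B\gg T^{2/3-\eps}$ the auxiliary phase $\tilde h$ appearing in \eqref{7.13}, whose derivatives are of size $\asymp B^3/T^2$, no longer has bounded derivatives, so applying Poisson summation to the $n$-sum buys nothing. Accordingly I would start from $\Omega_d$ as in \eqref{6.8}, insert the asymptotic for $\mathcal{I}^\sharp_u(m,n)$ from Lemma \ref{structure_of_I_sharp} \ref{item5.2_2} (legitimate since $T^{1/2+\eps}\ll B\ll T^{1-\eps}$), and bound the short $u$- and $x$-integrals of Lemma \ref{structure_of_I} \ref{item5.1_2} trivially; this replaces $\mathcal{I}(m,n,dl)$ by $XT^{1/2}B^{-2}$ times a $T^\eps$-inert weight cutting $n$ to the window $n-mt_{f,g}\ll N_2$, $N_2\asymp N_0B/T$, so that
\[\Omega_d\ll_{\eps,A}\frac{T^{1+\eps}X^2}{B^4}\sum_{n\sim N_1}\left|\sum_{\substack{m\sim M_1\\ m\equiv \pm n\bmod d\\ n-mt_{f,g}\ll N_2}}\frac{\lambda_g(m)}{m^{1/2-it_g}}\right|^2+T^{-A}.\]

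Next I would open the absolute square, exploit the $m_1\leftrightarrow m_2$ symmetry, and carry out the two resulting counts exactly as in \eqref{7.2}: the $n$-sum, over a residue class mod $d$ inside an interval of length $\asymp N_2$, contributes $\ll 1+N_2/d$, while writing $m_2=m_1+rd$ with $0\leq r\ll N_2/d$ and applying the A.M-G.M inequality followed by the Ramanujan bound on average \eqref{2.2} to the $m_1$-sum contributes a further $\ll T^\eps(1+N_2/d)$. Hence $\Omega_d\ll_{\eps,A}T^{1+\eps}X^2B^{-4}(1+N_2/d)^2$. Feeding this into the chain \eqref{6.5}--\eqref{6.8} through the factor $\Theta^{1/2}\Omega_d^{1/2}$ with $\Theta\ll T^\eps$, and then summing over $l\ll C/D$ and $d\sim D$ with $D\ll C\ll Q$ using $\sum_{d\sim D}(1+N_2/d)\ll C+N_2$, I arrive at
\[S_+(N,C,X)\ll_{\eps,A}\frac{T^{1/2+\eps}NX}{QB}\left(C+N_2\right)+T^{-A}.\]

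It then only remains to substitute the bookkeeping relations of this range: $X\ll T^\eps$, $C=NX/(BQ)$, $Q=\sqrt{N/K}$, $N_0=BTC^2/N$, whence $N_2=N_0B/T=NX^2/Q^2\ll KT^\eps$, together with $B\gg T^{2/3}$, so that $NX/(QB)=C\ll N^{1/2}K^{1/2}T^{\eps-2/3}$. The term $T^{1/2}C^2$ then becomes $NK/T^{5/6}$ and the term $T^{1/2}CN_2$ becomes $N^{1/2}K^{3/2}/T^{1/6}$, each up to a factor $T^\eps$, which is the asserted bound.

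I expect no serious technical obstacle here; the essential point — and the limitation that governs the strength of the final result — is that in this range we deliberately forgo any extra cancellation in the $m$-sum, the Poisson device of \S\S\ref{upto_two_third} being unavailable for $B\gg T^{2/3}$, so one simply counts and Proposition \ref{prop_7.4} comes out comparatively weak. It is this bound, together with Propositions \ref{prop_6.1}, \ref{prop_7.1}, \ref{prop_7.3} and the remaining range $B\gg T^{1-\eps}$, that ultimately pins the optimal value $K=T^{25/42}$; as already noted, a genuine stationary-phase treatment of the exponential integral underlying $\mathcal{I}^\sharp_u$ in this regime, in place of the trivial count used above, is what any improvement of Proposition \ref{prop_7.4}, and hence of Theorems \ref{Theorem} and \ref{Theorem2}, would require.
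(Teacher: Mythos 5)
Your proposal is correct and follows essentially the same route as the paper: a trivial estimate of $\Omega_d$ via Lemma \ref{structure_of_I_sharp} \ref{item5.2_2} (bounding the $u$- and $x$-integrals trivially and keeping only the window $n-mt_{f,g}\ll N_2=N_0B/T$), the same $(1+N_2/d)^2$ count as in \eqref{7.2}, and the same bookkeeping substitutions $C=NX/(BQ)$, $N_2\ll KT^\eps$, $B\gg T^{2/3}$ leading to $T^{1/2}C^2\ll NK/T^{5/6}$ and $T^{1/2}CN_2\ll N^{1/2}K^{3/2}/T^{1/6}$. Your side remark that the Poisson device of \S\S\S\ref{upto_two_third} fails here because the derivatives of $\tilde h$ are of size $B^3/T^2$ is also consistent with the paper's restriction of that argument to $B\ll T^{2/3-\eps}$.
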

    \subsection{Oscillation beyond \texorpdfstring{$T^{1-\eps}$}{T}} If $B\gg T^{1-\eps},$ we smooth out the $n$-sum by introducing a smooth compactly supported weight $\varphi$ with support in $[1/2,5/2]$ and $\varphi\equiv 1$ on $[1,2]$. Opening the absolute value square, and using the symmetry of the $m_1,m_2$-sums we get
    \begin{align}\label{7.41}
        \Omega_d\ll \sum_{\substack{m_1,m_2\sim M_1 \\0\leq m_2-m_1 \\m_1\equiv m_2 \bmod d}} \frac{\lambda_g(m_1){\lambda_g(m_2)}}{m_1^{1/2-it_g}m_2^{1/2+it_g}}\sum_{n\equiv \pm m_1\bmod d}\varphi\left(\frac{n}{N_1}\right)\mathcal I(m_1,n,dl)\overline{\mathcal I(m_2,n,dl)}.
    \end{align}
    Writing $n=\pm m_1+rd$ and applying Poisson summation formula on the $r$-sum implies that the $n$-sum equals
    \begin{align}\label{7.42}
        &=\frac{1}{2\pi i}\sum_{r}\varphi\left(\frac{\pm m_1+dr}{N_1}\right) \mathcal I(m_1,m_1+dr,dl)\overline{\mathcal I(m_2,m_1+dr,q)}\\ \nonumber
        &\asymp  \sum_{r}\int\phi\left(\frac{\pm m_1+dw}{{N_1}}\right) \mathcal I(m_1,m_1+dw,dl)\overline{\mathcal I(m_2,m_1+dw,dl)}e(-rw)\mathop{dw}\\ \nonumber
        &\asymp \frac{N_1}{d}\sum_{r}e\left(\frac{\pm rm_1}{d}\right)\int \varphi(w)\mathcal{I}(m_1,N_1w,dl)\overline{\mathcal{I}(m_2,N_1w,dl)}e\left(-\frac{r{N_1}w}{d}\right)\mathop{dw},
    \end{align}
    by a standard change of variables.
    Next, from Lemma \ref{structure_of_I} \ref{item5.1_2} we have, \ref{item5.1_2_2}
    \begin{align*}
        \mathcal{I}(m_i,N_1w,dl)= X\int_{u_i\ll T^\varepsilon B^{-1}}&\int W(x_i)e\left(-\frac{Bu_i}{2\pi}\log\frac{x_iXdl}{2\pi^2m_iQ}\right)\mathcal{I}_{u_i}^\sharp(m_i,N_1w)\mathop{dx_i}\mathop{du_i}
    \end{align*}
   
    Then the multiple integral in \eqref{7.42} boils down to 
    \begin{align}\label{7.43}
        X^2\int\int_{u_1,u_2\ll T^{\eps}B^{-1}}&\int\int_{x_1,x_2}(\cdots)\\ \nonumber
        &\int \varphi(w)\mathcal{I}_{u_1}^\sharp(m_1,N_1w)\overline{\mathcal{I}_{u_2}^\sharp(m_2,N_1w)}e\left(-\frac{r{N_1}w}{d}\right)\mathop{dw}\mathop{dx_1}\mathop{dx_2}\mathop{du_1}\mathop{du_2}
    \end{align}
    Isolating the $w$ integral briefly by using Lemma \ref{structure_of_I} \ref{item5.1_2} \ref{item5.1_2_2} once more, it equals
    \begin{align*} 
       I_w= \int \varphi(w)e \left(\frac{B}{2\pi}(\tau_1-\tau_2)\log w\right)e\left(-\frac{r{N_1}w}{d}\right)dw.
    \end{align*}
    Repeated integration by parts implies that the $I_w$ is negligibly small unless $r\ll BdT^\eps/N_1,$ in which case we obtain
    by Cauchy's inequality and Lemma \ref{structure_of_I_sharp} \ref{item5.2_3} (after estimating the $u_i,x_i$-integrals trivially) that \eqref{7.43} is 
    \begin{align}
        \ll_\eps \frac{T^\eps X^2}{B^3}.
    \end{align}
    Then \eqref{7.42} is 
    \begin{align}
        \ll_{\eps,A} \frac{ T^\eps N_1X^2}{dB^3}\sum_{r\ll \frac{BdT^\eps}{N_1}} 1\ll_{\eps,A}  \frac{ T^\eps N_1X^2}{dB^3}\left(1+\frac{Bd}{N_1}\right) + T^{-A}, 
    \end{align}
    whence
    \begin{align*}
        \Omega_d\ll_{\eps, A}\frac{ T^\eps N_1X^2}{dB^3}\left(1+\frac{Bd}{N_1}\right)  \sum_{\substack{m_1,m_2\sim M_1\\0\leq m_2-m_1 \\ m_1\equiv m_2 \bmod d}} \frac{|\lambda_g(m_1)\lambda_g(m_2)|}{(m_1m_2)^{1/2}}.
    \end{align*}
    Estimating the $m_1,m_2$-sum using the AM-GM inequality and Ramanujan bound on average \eqref{2.2} as earlier yields,
    \begin{align*}
        \Omega_d &\ll_{\eps, A}\frac{ T^\eps N_1X^2}{dB^3}\left(1+\frac{Bd}{N_1}\right) \left(1+\frac{M_1}{d}\right)+ T^{-A}\\ &\ll_{\eps,A} \frac{T^\eps X^2}{B^2}\left(\frac{N_1}{dB}+1\right)\left(1+\frac{M_1}{d}\right) +T^{-A}\\ 
        &\ll_{\eps,A} \frac{T^\eps X^2}{B^2}\left(\frac{N_0}{dB}+1\right)\left(1+\frac{N_0}{d}\right) +T^{-A}\\
        &\ll_{\eps, A} \frac{T^\eps X^2}{B^2}\left(1+\frac{N_0}{d}+\frac{N_0^2}{Bd^2}\right)+ T^{-A},
    \end{align*}
   after taking supremum over $N_1,M_1\ll N_0=M_0$. Thus 
   \begin{align}
       S_+(N,C,X)&\ll_{\eps, A} \frac{NX}{Q}\sup_{1\ll D\ll C}\sum_{d\sim D} \left(1+\frac{N_0}{d}+\frac{N_0^2}{Bd^2}\right)^{1/2}+ T^{-A}\\ \nonumber 
       &\ll _{\eps, A} \frac{NX}{Q}\sup_{1\ll D\ll C}\sum_{d\sim C} \left(1+\frac{N_0^{1/2}}{d^{1/2}}+\frac{N_0}{B^{1/2}d}\right)+ T^{-A}\\ \nonumber
       &\ll_{\eps,A} \frac{NX}{Q}\left(C+N_0^{1/2}C^{1/2}+\frac{N_0}{B^{1/2}}\right) + T^{-A}.
   \end{align}
   With $B\gg T^{1-\eps}$, $X\ll T^\eps$ $C=NX/BQ$, $N_0=B^2C^2T^\eps/N\ll_\eps NT^\eps/Q^2\ll_\eps KT^\eps$, we have 
   \begin{proposition}\label{prop_7.5}
       If $B=NX/CQ\gg T^{1-\eps}$,
       \begin{align}
           S_+(N,C,X)\ll_\eps T^\eps \left(\frac{NK}{T}+ \frac{N^{3/4}K^{5/4}}{T^{1/2}}+\frac{N^{1/2}K^{3/2}}{T^{1/2}}\right).
       \end{align}
   \end{proposition}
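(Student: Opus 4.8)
The plan is to pick up from the reduction in \eqref{6.5}--\eqref{6.8}: after Cauchy's inequality on $n$, bounding $S_+(N,C,X)$ has been reduced to estimating the second moment $\Omega_d$ in \eqref{6.8}, and since $B\gg T^{1-\eps}$ we are in case \ref{item5.1_2_2} of Lemma \ref{structure_of_I}, so $\mathcal I(m,n,dl)$ is an $X$-weighted double integral over $u\ll T^\eps B^{-1}$ and $x$ of a phase whose $\tau$-dependence is the linear-logarithmic oscillation $B\tau\log(m/n)$. First I would smooth the $n$-sum by inserting a nonnegative $\varphi$ supported on $[1/2,5/2]$ with $\varphi\equiv1$ on $[1,2]$, open the absolute value square in $\Omega_d$ into two copies $m_1,m_2$ of the $m$-sum, and use symmetry to restrict to $0\le m_2-m_1$. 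Writing $n=\pm m_1+rd$ and applying Poisson summation in $r$ turns the $n$-sum into a sum over the dual variable $r$ of a Fourier integral in a continuous variable $w$; after the standard change of variable $n=N_1w$ this produces the factor $N_1/d$, the additive character $e(\pm rm_1/d)$, the phase $-rN_1w/d$, and the residual oscillation of $\mathcal I_{u_1}^\sharp(m_1,N_1w)\overline{\mathcal I_{u_2}^\sharp(m_2,N_1w)}$.

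Next I would isolate the $w$-integral. By Lemma \ref{structure_of_I} \ref{item5.1_2_2} the $w$-dependent part of the phase is $(B/2\pi)(\tau_1-\tau_2)\log w-rN_1w/d$, so repeated integration by parts (Lemma \ref{repeated_integration_by_parts}) renders the integral negligible unless $r\ll BdT^\eps/N_1$. For $r$ in this surviving range, I would apply Cauchy--Schwarz in $w$ against $\int\varphi(w)|\mathcal I_{u_i}^\sharp(m_i,N_1w)|^2\,dw$ for $i=1,2$ and invoke the $L^2$-bound $\ll_\eps T^\eps/B$ from Lemma \ref{structure_of_I_sharp} \ref{item5.2_3}. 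Estimating the $u_i$- and $x_i$-integrals trivially, the $n$-sum is then bounded by $O(T^\eps N_1X^2B^{-3}d^{-1}(1+Bd/N_1))$, and feeding this together with the A.M--G.M.\ plus Ramanujan-on-average estimate \eqref{2.2} for the $m_1,m_2$-sum into $\Omega_d$ gives, after taking $\sup$ over $N_1,M_1\ll N_0$, a bound of the shape $\Omega_d\ll T^\eps X^2B^{-2}(1+N_0/d+N_0^2/(Bd^2))$.

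Finally, summing over $d\sim D\ll C$ yields $S_+(N,C,X)\ll(NX/Q)(C+N_0^{1/2}C^{1/2}+N_0/B^{1/2})$, and substituting $X\ll T^\eps$, $C=NX/(BQ)$, $Q=\sqrt{N/K}$, $B\gg T^{1-\eps}$, and $N_0=B^2C^2T^\eps/N\ll KT^\eps$ produces exactly the three claimed terms $NK/T$, $N^{3/4}K^{5/4}/T^{1/2}$ and $N^{1/2}K^{3/2}/T^{1/2}$. The main obstacle I anticipate is that in this highly oscillatory regime the phase $B\tau\log(m/n)$ has no stationary point on the relevant support of $\tau$, so there is no clean stationary-phase asymptotic for $\mathcal I_u^\sharp$; one is forced to rely only on the $L^2$-average bound of Lemma \ref{structure_of_I_sharp} \ref{item5.2_3}, which makes the Cauchy--Schwarz step in $w$ somewhat lossy. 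A related subtlety is that, unlike in \S\ref{upto_two_third}, the zero frequency $r=0$ yields no extra saving over $r\ne0$, so the entire dual range $r\ll BdT^\eps/N_1$ must be handled uniformly, with only the trivial count $1+Bd/N_1$ available.
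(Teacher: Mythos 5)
Your proposal is correct and follows essentially the same route as the paper's own argument in \S 7.3: smoothing the $n$-sum, Poisson summation in $n$ modulo $d$, truncating the dual variable at $r\ll BdT^\eps/N_1$ via integration by parts on the $w$-integral, and then Cauchy--Schwarz in $w$ combined with the $L^2$-average bound of Lemma \ref{structure_of_I_sharp} \ref{item5.2_3}, followed by the A.M--G.M.\ plus Ramanujan-on-average treatment of the $m_1,m_2$-sum. The intermediate bounds you record for the $n$-sum and for $\Omega_d$, and the final substitution $N_0\ll KT^\eps$, match the paper's computation exactly.
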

   \section{Proof of Theorem 1.1}
   We are now in a position to prove Theorem \ref{Theorem}. Putting together the bounds from  Propositions \ref{prop_6.1}, \ref{prop_7.1}, \ref{prop_7.3}, \ref{prop_7.4}, \ref{prop_7.5} in \eqref{2.28}, we get
   \begin{align}
       S(N) \ll_{\eps} T^\eps\Biggl(&\frac{N}{K}+\frac{TN^{1/2}}{K^{3/2}}+\frac{T^{3/2}N^{1/2}}{K^{3/2}}+\frac{T^{1/2}N}{K}+\frac{T^{1/2}N^{3/4}}{K^{1/4}}\\ \nonumber&+\frac{NK}{T^{5/6}}+ \frac{N^{1/2}K^{3/2}}{T^{1/6}}+\frac{NK}{T}+\frac{N^{3/4}K^{5/4}}{T^{1/2}}+\frac{N^{1/2}K^{3/2}}{T^{1/2}}\Biggr).
   \end{align}
   With $N\ll T^{3/2+\eps}$ from \eqref{2.28},
   \begin{align}\label{8.2}
       \frac{S(N)}{\sqrt{N}}\ll_\eps T^\eps\Biggl(&\frac{T^{3/2}}{K^{3/2}}+\frac{T^{5/4}}{K}+\frac{T^{7/8}}{K^{1/4}}\\ \nonumber&+\frac{K}{T^{1/12}}+ \frac{K^{3/2}}{T^{1/6}}+\frac{K^{5/4}}{T^{1/8}}\Biggr).
   \end{align}
   Imposing the restriction (necessary for cancellation in all the contributing terms) $$T^{1/2}< K< T^{11/18},$$ it is routine to observe from \eqref{8.2} that
   \begin{align}\label{8.3}
       \frac{S(N)}{\sqrt{N}}\ll_{\eps} \frac{T^{7/8}}{K^{1/4}}+\frac{K^{3/2}}{T^{1/6}}.
   \end{align}
   The optimal choice of $K$ is $K=T^{25/42}$. Plugging this value of $K$ in \eqref{8.3} and choosing $\theta=1/42$ in Lemma  \ref{Approx_func_eq} we obtain
   \begin{align}
       L\left(1/2+it,f\otimes g\right)\ll_{\eps} T^{61/84 +\eps},
   \end{align}
   thereby proving Theorem \ref{Theorem}.
   \section{Modifications for Theorem 1.2}\label{Modifictions}
    As earlier the approximate functional equation reduces the problem of obtaining subconvexity to obtaining cancellations in the weighted Dirichlet seies
    \begin{align}
        S(N)= \sum_{n}\lambda_f(n)\lambda_g(n)n^{-it} V(n/N),
    \end{align}
    for $T^{1+\nu/2-\theta}< N<T^{1+\nu/2+\eps}.$
    Writing $t=t_f-t_g,$ we separate oscillations using the delta symbol and take smooth dyadic partitions of the $x$-integral and the $q$-sum, which leads to
    \begin{align}\label{9.2}
        S(N) \ll_{\varepsilon} T^\varepsilon\sup_{\substack{1\ll C\ll Q \\ T^{-100} \ll X\ll Q^\varepsilon\\ \pm}} |S_{\pm}(N,C,X)| + T^{-96+\varepsilon},
    \end{align}
    where 
    \begin{align}
        S_{\pm}(N,C,X) &= \frac{1}{Q}\int_{\mathbb{R}}W\left(\frac{\pm x}{X} \right)\sum_{q\sim C} \frac{g(q,x)}{q}\; \sideset{}{^*}\sum_{a\bmod q}\\
        \nonumber
		&\quad \quad \sum_{n=1}^\infty \lambda_f(n)e\left(\frac{an}{q}\right)n^{-it_f}e\left(\frac{nx}{qQ}\right)V\left(\frac{n}{N}\right)\\
        \nonumber
		&\quad\;\;\;\, \sum_{m=1}^\infty \lambda_g(m) e\left(-\frac{am}{q}\right)m^{it_g}e\left(-\frac{mx}{qQ}\right)U\left(\frac{m}{N}\right)dx.
    \end{align}
    As earlier it is enough to estimate $S_+(N,C,X)$.
    Next we apply Voronoi summation formula on the sums
    \begin{align}
        \sum_{n=1}^\infty \lambda_f(n)e\left(\frac{an}{q}\right)n^{-it_f}e\left(\frac{nx}{qQ}\right)V\left(\frac{n}{N}\right)
    \end{align} and
    \begin{align}
        \sum_{m=1}^\infty \lambda_g(m) e\left(-\frac{am}{q}\right)m^{it_g}e\left(-\frac{mx}{qQ}\right)U\left(\frac{m}{N}\right).
    \end{align}
    Next with $B=NX/CQ$ we analyse the integral transforms arising from the Voronoi summation formula exactly as in \S \ref{Prem_Int_transform}. After further simplifications using the $x$-integral and the $a$-sum as in \S \ref{Ramanujan_sum} and \S \ref{simplification_of_I} the estimation of $S_+(N,C,X)$ for the cases $B\ll T^\eps$ and $B\gg T^{1-\eps}$ is exactly what we obtained in Propositions \ref{prop_6.1} and \ref{prop_7.5}, i.e 
    \begin{align}\label{9.6}
        S_+(N,C,X)\ll_\eps T^\eps\left(\frac{N}{K}+\frac{TN^{1/2}}{K^{3/2}}\right),
    \end{align}
    if $B\ll T^\eps$ and 
    \begin{align}\label{9.7}
        S_+(N,C,X)\ll_\eps T^\eps\left(\frac{NK}{T}+\frac{N^{3/4}K^{5/4}}{T^{1/2}}+\frac{N^{1/2}K^{3/2}}{T^{1/2}}\right),
    \end{align}
    if $B\gg T^{1-\eps}$.
    
    For $T^{\eps}\ll B\ll T^{1-\eps}$ we write $q=dl$ and take a dyadic subdivision over $d$ as in \eqref{6.5} yielding
    \begin{align}
         S_+(N,C,X) \ll_{\eps,A} \frac{NB}{Q}&\sum_{1\leq l\ll \frac{C}{D}}\frac{1}{l}\sum_{d\sim D}\\ \nonumber
                &\sum_{\pm}\sum_{n\asymp N_0}\frac{|\lambda_f(n)|}{n^{1/2}}\left|\sum_{\substack{m\asymp N_0\\ m\equiv \pm n \bmod d}}\frac{\lambda_g(m)}{m^{1/2+it_g}}\mathcal{I}(m,n,dl)\right|+ T^{-A}
    \end{align}
    where $N_0=BTC^2/N$ and 
    \begin{align}
        \mathcal{I}(m,n,dl)=  X\int_{u\ll T^\varepsilon B^{-1}}&\int W(x)e\left(-\frac{Bu}{2\pi}\log\frac{xXdl}{2\pi^2mQ}\right)\mathcal{I}^\sharp_u(m,n)\mathop{dx}\mathop{du},
    \end{align}
    $\mathcal{I}_u^\sharp(m,n)$ being negligibly small unless 
    \begin{align}\label{9.10}
        n-mt_{f,g}\ll N_2= \frac{N_0T^\eps}{B},
    \end{align}
    if $B\ll T^{1-\nu/2+\eps}$ and 
    \begin{align}\label{9.11}
        n-mt_{f,g}\asymp N_2= \frac{N_0B}{T^{2-\nu}},
    \end{align}
    in which case we also have 
    \begin{align}\label{9.12}
        \mathcal{I}_u^\sharp(m,n)= \frac{T}{BT^{\nu/2}}V_{\eps,u}\left(\frac{n-mt_{f,g}}{N_2}\right)e\left(\frac{1}{2\pi}H(n,m)\right) +O_A(T^{-A}),
    \end{align}
    where 
    \begin{align*}
        H(m,n)=-2(t_g-t_f)\log\frac{e(m-n)}{t_g-t_f}+2t_g\log m-2t_f\log n,
    \end{align*}
    if $T^{1-\nu/2+\eps}\ll B\ll T^{1-\eps}$. 
    Next after applying Cauchy on $n$, opening up the absolute value square and changing the order of summations, we estimate $S_+(N,C,X)$ as in \S\S \ref{Estimate_upto_square_root} and \S\S\S \ref{beyond_two_third} if $B\ll T^{1-\nu/2+\eps}$ and $B\gg T^{1-\nu/3-\eps}$ respectively, obtaining
    \begin{align}\label{9.13}
        S_+(N,C,X)\ll_\eps T^\eps\left(\frac{T^{1-\nu/2}N}{K}+\frac{T^{2-\nu/2}N^{1/2}}{K^{3/2}}\right),
    \end{align}
    if $B\ll T^{1-\nu/2+\eps}$ and
    \begin{align}\label{9.14}
        S_+(N,C,X)\ll_\eps T^{\eps}\left(\frac{NK}{T^{1-\nu/6}}+\frac{N^{1/2}K^{3/2}}{T^{1-5\nu/6}}\right),
    \end{align}
    if $T^{1-\nu/3-\eps}\ll B\ll T^{1-\eps}$.
    
    For the range $T^{1/2-\nu+\eps}\ll B\ll T^{1-\nu/2-\eps}$, we estimate the contribution from terms $m_1=m_2$ trivially as \eqref{7.10}. For the contribution from the terms $m_1\neq m_2,$ we apply Poisson Summation formula on the $n$-variable. Treating the Fourier transform as \S\S\S \ref{upto_two_third}, we estimate the zero frequency contribution as \eqref{7.16} and in the nonzero frequency we see that its contribution is governed by an averaged shifted convolution similar to $S_{RH}$ in \eqref{7.23} and we estimate this sum as Lemma \ref{estimate_of_S_1} to finally get
    \begin{align}\label{9.15}
        S_+(N,C,X)\ll_\eps T^{\eps}\left(\frac{T^{1-\nu/2}N}{K}+\frac{T^{1/2}N^{3/4}}{K^{1/4}}+\frac{TN^{1/2}}{K^{1/2}}\right).
    \end{align}
    Combining the estimates of \eqref{9.6}, \eqref{9.7}, \eqref{9.13}, \eqref{9.14} and \eqref{9.15} in \eqref{9.2} we get
    \begin{align}
        S(N)\ll_\eps T^\eps\bigg(&\frac{T^{1-\nu/2}N}{K} + \frac{NK}{T^{1-\frac{\nu}{6}}}+ \frac{N^{1/2}T}{K^{3/2}}+\frac{N^{1/2}K^{3/2}}{T^{1-5\nu/6}}\\ \nonumber &+ \frac{T^{2-\nu/2}N^{1/2}}{K^{3/2}}+\frac{T^{1/2}N^{3/4}}{K^{1/4}}+\frac{N^{3/4}K^{5/4}}{T^{1/2}}\bigg).
    \end{align}
    Then with $N\ll T^{1+\nu/2+\eps}$,
    \begin{align}\label{9.17}
        \frac{S(N)}{\sqrt{N}}\ll_\eps T^{\eps}\bigg(&\frac{T^{3/2-\nu/4}}{K}+\frac{K}{T^{1/2-5\nu/12}}+\frac{T}{K^{3/2}}+\frac{K^{3/2}}{T^{1-5\nu/6}}\\ \nonumber
        &+\frac{T^{2-\nu/2}}{K^{3/2}}+\frac{T^{3/4+\nu/8}}{K^{1/4}}+
        \frac{K^{5/4}}{T^{1/4-\nu/8}}\bigg).
    \end{align}
    Imposing the restriction (necessary for cancellation in all the contributing terms)
    \[T^{1-\nu/2}<K<\min\{T^{1-7\nu/18}, T^{3/5+\nu/10}\},\]
    reduces \eqref{9.17} to
    \begin{align}
        \frac{S(N)}{N^{1/2}}\ll_\eps T^\eps \left(\frac{T^{3/4+\nu/8}}{K^{1/4}}+\frac{K^{3/2}}{T^{1-5\nu/6}}+\frac{K^{5/4}}{T^{1/4-\nu/8}}\right).
    \end{align}
    Notice that the restriction also forces $\nu>2/3$.

    If $2/3+\eps<\nu\leq 14/17$, we choose $K=T^{2/3}$ and $\theta= \nu/8-1/12$, obtaining
    \begin{align}
        L(1/2+it,f\otimes g)\ll_\eps  T^{7/12+\nu/8+\eps},
    \end{align}
    and if $14/17\leq \nu\leq 1$, we choose $K= T^{1-17\nu/42}$ and $\theta= \nu/42$, obtaining
    \begin{align}
        L(1/2+it,f\otimes g)\ll_\eps T^{1/2+19\nu/84+\eps}.
    \end{align}
    This completes the proof of Theorem \ref{Theorem2}.
    \section{Acknowledgement}
    The author is grateful to Prof. Ritabrata Munshi for suggesting the problem, sharing his ideas and providing his continuous guidance and support throughout the course of this work. He would also like to thank Sumit Kumar, Pratim Mitra, Sampurna Pal and Mayukh Dasaratharaman for many useful discussions and conversations. The author is also grateful to Indian Statistical Institute, Kolkata for providing an excellent and productive research atmosphere.


\begin{thebibliography}{abcdefg}

            \bibitem{BB20}{V. Blomer and J. Buttcane, On the subconvexity problem for $L$-functions on $\rm GL(3)$, Ann. Sci. \'Ec. Norm. Sup\'er. (4) {\bf 53} (2020), no.~6, 1441--1500; MR4203038}
    
		\bibitem{BKY13}{V. Blomer, R. Khan and M.~P. Young,             Distribution      of mass of holomorphic cusp forms, Duke       Math. J. {\bf 162}           (2013), no.~14, 2609--2644;       MR3127809}

            \bibitem{Bur63}{D.~A. Burgess, On character sums and $L$-series. II, Proc. London Math. Soc. (3) {\bf 13} (1963), 524--536; MR0148626}

            \bibitem{DFI93}{W.~D. Duke, J.~B. Friedlander and H. Iwaniec, Bounds for automorphic $L$-functions, Invent. Math. {\bf 112} (1993), no.~1, 1--8; MR1207474}

            \bibitem{DM23}{\textit{M. Dasaratharaman} and \textit{R. Munshi}, ``Subconvexity for Symmetric Square L-functions off-centre'', Preprint, arXiv:2302.07236 [math.NT] (2023)}

            \bibitem{Go82}{A. Good, The square mean of Dirichlet series associated with cusp forms, Mathematika {\bf 29} (1982), no.~2, 278--295 (1983); MR0696884}

            \bibitem{HL94}{J.~E. Hoffstein and P. Lockhart, Coefficients of Maass forms and the Siegel zero, Ann. of Math. (2) {\bf 140} (1994), no.~1, 161--181; MR1289494}

            \bibitem{HM06}{G. Harcos and P.~G. Michel, The subconvexity problem for Rankin-Selberg $L$-functions and equidistribution of Heegner points. II, Invent. Math. {\bf 163} (2006), no.~3, 581--655; MR2207235}

            \bibitem{HolM13}{R. Holowinsky and R. Munshi, Level aspect subconvexity for Rankin-Selberg $L$-functions, in {\it Automorphic representations and $L$-functions}, 311--334, Tata Inst. Fundam. Res. Stud. Math., 22, Tata Inst. Fund. Res., Mumbai, ; MR3156856}
		
		\bibitem{Hu21}{B.~R. Huang, On the Rankin-Selberg problem,      Math.      Ann. {\bf 381} (2021), no.~3-4, 1217--1251;         MR4333413}
        
            \bibitem{Hu24}{B.~R. Huang, Uniform bounds for $\rm GL(3)\times   GL(2)$ $L$-functions, J. Inst. Math. Jussieu {\bf 23} (2024),    no.~4, 1607--1650; MR4803682}

            \bibitem{Iw92}{H. Iwaniec, The spectral growth of automorphic $L$-functions, J. Reine Angew. Math. {\bf 428} (1992), 139--159; MR1166510}
		
		\bibitem{IK04}{H. Iwaniec and E. Kowalski, {\it Analytic        number       theory}, American Mathematical Society           Colloquium Publications,      53, Amer. Math. Soc.,             Providence, RI, 2004; MR 2061214}

            \bibitem{IS00}{H. Iwaniec and P.~C. Sarnak, Perspectives on the analytic theory of $L$-functions, Geom. Funct. Anal. {\bf 2000}, Special Volume, Part II, 705--741; MR1826269}

            \bibitem{JM05}{M.~I. Jutila and Y. Motohashi, Uniform bound for Hecke $L$-functions, Acta Math. {\bf 195} (2005), 61--115; MR2233686}

            \bibitem{JM06}{M.~I. Jutila and Y. Motohashi, Uniform bounds for Rankin-Selberg $L$-functions, in {\it Multiple Dirichlet series, automorphic forms, and analytic number theory}, 243--256, Proc. Sympos. Pure Math., 75, Amer. Math. Soc., Providence, RI, ; MR2279941}

            \bibitem{KMV02}{E. Kowalski, P.~G. Michel and J.~M. VanderKam, Rankin-Selberg $L$-functions in the level aspect, Duke Math. J. {\bf 114} (2002), no.~1, 123--191; MR1915038}

            \bibitem{Kum25}{S. Kumar, Subconvexity bounds for ${\rm GL(3)}\times {\rm GL(2)}$ $L$-functions in ${\rm GL(2)}$ spectral aspect, J. Eur. Math. Soc. (JEMS) {\bf 27} (2025), no.~2, 543--588; MR4859574}

            \bibitem{KS03}{H.~H. Kim, Functoriality for the exterior square of ${\rm GL}_4$ and the symmetric fourth of ${\rm GL}_2$, J. Amer. Math. Soc. {\bf 16} (2003), no.~1, 139--183; MR1937203}

            \bibitem{Li11}{X.-Q. Li, Bounds for ${\rm GL}(3)\times {\rm GL}(2)$ $L$-functions and ${\rm GL}(3)$ $L$-functions, Ann. of Math. (2) {\bf 173} (2011), no.~1, 301--336; MR2753605}

            \bibitem{LLY06}{Y.-K. Lau, J. Liu and Y. Ye, A new bound $k^{2/3+\epsilon}$ for Rankin-Selberg $L$-functions for Hecke congruence subgroups, IMRP Int. Math. Res. Pap. {\bf 2006}, Art. ID 35090, 78 pp.; MR2235495}

            \bibitem{MV10}{P.~G. Michel and A. Venkatesh, The 
             subconvexity problem for ${\rm GL}_2$, Publ. Math. Inst. Hautes \'Etudes Sci. No. 111 (2010), 171--271; MR2653249}

            \bibitem{MS06}{S.~D. Miller and W. Schmid, Automorphic distributions, $L$-functions, and Voronoi summation for ${\rm GL}(3)$, Ann. of Math. (2) {\bf 164} (2006), no.~2, 423--488; MR2247965}

            \bibitem{Mun15}{R. Munshi, The circle method and bounds for $L$-functions---III: $t$-aspect subconvexity for $GL(3)$ $L$-functions, J. Amer. Math. Soc. {\bf 28} (2015), no.~4, 913--938; MR3369905}
        
            \bibitem{Mun22}{R. Munshi, Subconvexity for $\rm GL(3)\times GL(2)$ $L$-functions in $t$-aspect, J. Eur. Math. Soc. (JEMS) {\bf 24} (2022), no.~5, 1543--1566; MR4404783}

            \bibitem{Nel23}{P.~D. Nelson, Spectral aspect subconvex bounds for $U_{n+1} \times U_n$, Invent. Math. {\bf 232} (2023), no.~3, 1273--1438; MR4588566}

            \bibitem{Pal22}{\textit{S. Pal}, ``Second moment of degree three $L$-functions'', Preprint, arXiv:2212.14620 [math.NT] (2022)}

            \bibitem{PY23}{I.~N. Petrow and M.~P. Young, The fourth moment of Dirichlet $L$-functions along a coset and the Weyl bound, Duke Math. J. {\bf 172} (2023), no.~10, 1879--1960; MR4624371}

            \bibitem{Sar01}{P.~C. Sarnak, Estimates for Rankin-Selberg $L$-functions and quantum unique ergodicity, J. Funct. Anal. {\bf 184} (2001), no.~2, 419--453; MR1851004}
            
            \bibitem{Sha22}{P.~K. Sharma, Subconvexity for $GL(3)\times GL(2)$ twists, Adv. Math. {\bf 404} (2022), Paper No. 108420, 47 pp.; MR4416133}

            \bibitem{Weyl21}{Weyl, H. Zur Abschätzung von $\zeta(1·ti)$. Math Z 10, 88–101 (1921). https://doi.org/10.1007/BF02102307}

            \bibitem{Ye14}{\textit{Z. Ye}, ``The Second Moment of Rankin-Selberg L-function and Hybrid Subconvexity Bound'', Preprint, arXiv:1404.2336 [math.NT] (2014)}



	\end{thebibliography}
    \end{document}